\documentclass[a4paper]{article}

\title{Rate of metastability of an iterative algorithm for quadratic optimization}

\author{Paulo Firmino$^\dagger$}

\usepackage{amsmath}
\usepackage{amssymb}
\usepackage{amsthm}
\usepackage[shortlabels]{enumitem}
\usepackage{xcolor}

\theoremstyle{plain} \newtheorem{teor}{Theorem}
\theoremstyle{plain} \newtheorem{prop}{Proposition}
\theoremstyle{definition} \newtheorem{dfn}{Definition}
\theoremstyle{plain} \newtheorem{lemma}{Lemma}
\theoremstyle{plain} \newtheorem{cor}{Corollary}
\theoremstyle{definition} \newtheorem{obs}{Remark}
\theoremstyle{plain} 
\theoremstyle{definition} 

\DeclareMathOperator{\Fix}{Fix}

\newcommand{\R}{\mathbb{R}}

\newcommand{\N}{\mathbb{N}}

\newcommand{\dd}{E}
\newcommand{\nalpha}{Q}
\newcommand{\kone}{\tilde{K}}
\newcommand{\Ns}{N_4}

\usepackage{hyperref}

\begin{document}

\maketitle

\begin{center}
	{\scriptsize $^\dagger$Department of Mathematics, Faculdade de Ciências da Universidade de Lisboa\\
	Center for Mathematical Studies, Universidade de Lisboa\\
	Campo Grande 016, 1749-016 Lisboa, Portugal\\ 
	E-mail: \protect\url{fc49883@alunos.ciencias.ulisboa.pt}\\[2mm]}
\end{center}

\begin{abstract}
In this paper, relying on methods from proof mining, we provide a quantitative analysis of a theorem due to Xu \cite{Xu03}, stating that an iteration strongly converges to the solution of a well known quadratic optimization problem. Rates of metastability and some rates of asymptotic regularity were obtained. We get quadratic rates of asymptotic regularity for particular sequences.
\end{abstract}

{\bf Keywords:} Proof Mining, Quadratic Optimization, Nonexpansive mappings, Iterative procedures, Metastability

{\bf MSC:} 03F10, 47H09, 47J25, 47N10


\section{Introduction}
Let $H$ be a real Hilbert space and $l\in\N$. Xu \cite{Xu03} considered the well known quadratic optimization problem: 
\begin{align}\label{Prob}
\mathrm{(P)} \quad\min_{x\in F} \frac{1}{2}\langle Ax,x\rangle-\langle x,u\rangle
\end{align}
where $u\in H$, $A$ is a strongly positive self-adjoint bounded linear operator and $F$ is the set of common fixed points of $T_0,\ldots,T_{l-1}$, a family of nonexpansive mappings on $H$.

Xu would proceed with an iterative approach, defining the following iteration:
\begin{equation*}
x_{n+1}=(I-\alpha_{n+1}A)T_{n+1}x_n+\alpha_{n+1}u
\end{equation*}
where $(\alpha_n) \subset (0,1]$ and we define $T_n$ for $n \ge l$ as $T_{n \bmod l}$.

Xu proved the following strong convergence result

\begin{teor} \cite{Xu03} \label{teor3.1}
Assume the following hold:
\begin{enumerate}[(i)]
\item $\lim_n \alpha_n=0$ and $\sum_{n=0}^{+\infty} \alpha_n = +\infty$,
\item $\lim_n \frac{\alpha_n}{\alpha_{n+l}}=1$ or $\sum_{n=0}^{+\infty} |\alpha_n - \alpha_{n+l}| <+\infty$,
\item $F=\Fix(T_{l-1} \cdots T_1T_0)=\Fix(T_0T_{l-1} \cdots T_1)=\ldots=\Fix(T_{l-2} \cdots T_0T_{l-1})$.
\end{enumerate}

Then $(x_n)$ converges strongly to the unique solution $x^*$ of Problem (P).
\end{teor}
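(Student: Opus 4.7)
First I would establish unique existence of $x^*$ and a usable characterization. Since $A$ is strongly positive self-adjoint with some modulus $\bar\gamma>0$, the quadratic functional in (P) is strictly convex and coercive, and $F$ is nonempty closed convex by hypothesis (iii); hence $x^*$ exists, is unique, and is characterized by the variational inequality
\begin{equation*}
\langle Ax^* - u,\, x - x^*\rangle \ge 0 \quad \text{for all } x \in F.
\end{equation*}
I would also record the standard operator estimate $\|I-\alpha A\| \le 1 - \alpha\bar\gamma$ valid for all $\alpha$ in a suitable interval, which together with nonexpansiveness of $T_{n+1}$ immediately gives
\begin{equation*}
\|x_{n+1}-p\| \le (1-\alpha_{n+1}\bar\gamma)\|x_n-p\| + \alpha_{n+1}\|u-Ap\|
\end{equation*}
for every $p\in F$. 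An easy induction then bounds $(x_n)$ uniformly.

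The second step is asymptotic regularity. Because the index of $T_{n+1}$ cycles modulo $l$, the natural one-step difference $x_{n+1}-x_n$ contains a cross-term $T_{n+1}x_n - T_n x_{n-1}$ that cannot be controlled by a single application of nonexpansiveness. I would therefore first aim for $\|x_{n+l}-x_n\|\to 0$, by iterating the defining recursion $l$ times and applying a Xu-type lemma on the resulting contraction inequality; this is precisely where hypothesis (ii) enters, either through summability of $|\alpha_n-\alpha_{n+l}|$ or, in the ratio variant, through a refined lemma of the type used by Xu. Using boundedness of $(x_n)$ together with $\alpha_n\to 0$, I would then transfer this to $\|x_n - T_{n}T_{n-1}\cdots T_{n-l+1}x_n\|\to 0$ (with the indices understood modulo $l$).

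Next I would identify weak cluster points with elements of $F$. By hypothesis (iii), for each residue class of $n$ modulo $l$ the relevant cyclic composition is a nonexpansive map whose fixed point set coincides with $F$. Demiclosedness at zero of $I$ minus a nonexpansive map therefore forces every weak subsequential limit $q$ of $(x_n)$ to lie in $F$; combining this with the variational inequality for $x^*$ yields
\begin{equation*}
\limsup_{n\to\infty} \langle u - Ax^*,\, x_n - x^*\rangle \le 0.
\end{equation*}

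Finally, writing
\begin{equation*}
x_{n+1}-x^* = (I-\alpha_{n+1}A)(T_{n+1}x_n - x^*) + \alpha_{n+1}(u - Ax^*)
\end{equation*}
and squaring, using nonexpansiveness of $T_{n+1}$ together with the operator bound, produces a recursion of the shape
\begin{equation*}
\|x_{n+1}-x^*\|^2 \le (1-\alpha_{n+1}\bar\gamma)\|x_n-x^*\|^2 + \alpha_{n+1}\bar\gamma\,\sigma_n,
\end{equation*}
where $\sigma_n$ is (essentially) a constant multiple of $\langle u-Ax^*, x_{n+1}-x^*\rangle$, so $\limsup\sigma_n\le 0$. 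Xu's scalar lemma for sequences satisfying $\sum\alpha_n=\infty$ then concludes strong convergence. I expect the main obstacle to be the cyclic bookkeeping in the second step: the two forms of hypothesis (ii) require different quantitative arguments, and one must carefully group $l$-blocks of iterates before any summability or ratio lemma can be invoked; this is also the step where the quantitative analysis announced in the abstract will be most delicate.
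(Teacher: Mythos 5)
Your plan is correct: it is exactly Xu's original four-step argument that the paper takes as its starting point (Step 1 boundedness via $\|I-\alpha A\|\le 1-\alpha\gamma$ and induction; Step 2 asymptotic regularity through $\|x_{n+l}-x_n\|\to 0$ and the $l$-block bookkeeping under either form of hypothesis (ii); Step 3 the $\limsup$ inequality via the variational characterization of $x^*$, demiclosedness and weak cluster points along a fixed residue class mod $l$; Step 4 the squared recursion plus Xu's scalar lemma). The only substantive divergence from the paper is in how the theorem is actually re-derived there: rather than invoking sequential weak compactness and the demiclosedness principle in Step 3, the paper replaces that argument (following Ferreira--Leu\c{s}tean--Pinto) by a finitary Heine--Borel/projection argument, expressed through the quantitative propositions on $\overline{B}_K(p)$, and then obtains strong convergence as an (ineffective) corollary of explicit metastability rates, with hypothesis (iii) entering only through the quantitative modulus $\tau$. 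Your route is the shorter classical one and suffices for the qualitative statement; the paper's detour is what makes the uniform rates of metastability and asymptotic regularity extractable, which is the point of the article. One small bookkeeping remark: the composition you should control is $\tilde{T}_n=T_{n+l}\cdots T_{n+1}$ (equivalently your version with indices shifted), and when applying demiclosedness you must pass to a subsequence on which $n\bmod l$ is constant so that a single nonexpansive composition, whose fixed point set is $F$ by (iii), is in play — you indicate this, so the plan stands.
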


Our goal is to present a quantitative analysis of Xu's result and obtain rates of metastability.

This form of quantitative analysis of mathematical results emerged from the research program of proof mining \cite{Koh08,Koh17,Koh18}, which consists in analysing mathematical proofs using proof-theoretic tools (namely, functional interpretations) with the objective of obtaining additional computational information.
Theorems regarding convergence of sequences, as this is the case, are a common interest in proof mining. In such cases, the goal is to obtain quantitative information in the form of rates of metastability and rates of asymptotic regularity. A sequence $(x_n)$ is said to be metastable if

\[\forall k\in\N \forall f\in \N^\N \exists n \forall i,j \in [n,f(n)] \left(\|x_i-x_j\|\le\frac{1}{k+1}\right)\]

A metastability rate consists of a bound $\phi(k,f)$ on $n$. Metastability is equivalent to the Cauchy property. This equivalence is ineffective. While Cauchy rates (or convergence rates) cannot be extracted in general, metastability rates can, depending on the principles used by the proof. 



Körnlein \cite{Kor, KorTh} had obtained rates of metastability of an iteration due to Yamada \cite{Yam} regarding a class of variational problems. We show a different approach, and our interest is to focus on the particular case of (P), analysing Xu's proof of Theorem \ref{teor3.1}. We note that, if $\|A\|<2$, Xu's iteration is a particular case of Yamada's iteration.


%
Xu’s proofs of Theorem \ref{teor3.1} involve sequential weak compactness arguments. In this paper we apply the method developed by Ferreira, Leu\c{s}tean and Pinto \cite{FerLeuPin} guided by the bounded functional interpretation \cite{FerOli05} for circumventing such arguments and we compute rates of metastability for Xu’s iteration.
We obtain rates of $(\tilde{T}_n)$-asymptotic regularity, where $\tilde{T}_i:= T_{i+l} \cdots T_{i+1}$ for $i\in\N$, and common rates of $T_i$-asymptotic regularity for all $i$. Quadratic rates of $(\tilde{T}_n)$-asymptotic regularity for particular sequences were obtained.








\section{Preliminaries and first results}
Let $H$ be a real Hilbert space and let $A$ be a self-adjoint bounded linear operator on $H$. More details on these operators can be found, for example, in \cite{KaAk,BauCom}. 

Let us recall that $A$ is \emph{strongly positive} if there exists $\gamma>0$ such that for all $x\in H$:
\begin{equation}\label{StrPosEq}
\langle Ax,x\rangle \ge \gamma\|x\|^2
\end{equation}

The following lemma will be used in the sequel. For the sake of completeness, we prove it here.

\begin{lemma} \label{I-minus-alpha-A-lemma}
Let $A$ be strongly positive.
For any $\alpha \in [0, \|A\|^{-1}]$:
\begin{equation}
\|I-\alpha A\|\le 1-\alpha\gamma
\end{equation}
\end{lemma}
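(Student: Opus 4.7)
The plan is to exploit the fact that $I-\alpha A$ is self-adjoint (since $A$ is), so that its operator norm can be computed via the numerical radius identity
\[
\|B\| = \sup_{\|x\|=1} |\langle Bx, x\rangle|
\]
which holds for any self-adjoint bounded linear operator $B$. Applying this to $B = I - \alpha A$ reduces the problem to bounding $|1 - \alpha \langle Ax, x\rangle|$ for unit vectors $x$.

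First I would pin down the two-sided bound $\gamma \|x\|^2 \le \langle Ax, x\rangle \le \|A\| \|x\|^2$. The lower inequality is exactly strong positivity \eqref{StrPosEq}; the upper one follows from Cauchy--Schwarz, $\langle Ax, x\rangle \le \|Ax\|\|x\| \le \|A\|\|x\|^2$. In particular, evaluating on a unit vector shows $\gamma \le \|A\|$, so $\alpha \gamma \le \alpha\|A\| \le 1$ whenever $\alpha \in [0, \|A\|^{-1}]$, which guarantees that the quantity $1 - \alpha \langle Ax, x\rangle$ is nonnegative.

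Multiplying the numerical range bound by $\alpha \ge 0$ and subtracting from $1$ then gives, for every $x$ with $\|x\|=1$,
\[
0 \le 1 - \alpha\|A\| \le 1 - \alpha \langle Ax, x\rangle \le 1 - \alpha \gamma,
\]
so $|1 - \alpha \langle Ax, x\rangle| = 1 - \alpha \langle Ax, x\rangle \le 1 - \alpha\gamma$. Taking the supremum over unit vectors and invoking the self-adjoint norm identity closes the argument.

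The only conceptual subtlety is justifying the numerical radius formula for self-adjoint operators; since the paper points to \cite{KaAk,BauCom} for operator-theoretic background, I would simply cite this as a standard fact rather than reprove it. Otherwise the computation is routine, and the key ingredient that makes it work is precisely the restriction $\alpha \le \|A\|^{-1}$, without which $1 - \alpha \langle Ax, x\rangle$ could become negative and the absolute value would destroy the clean bound.
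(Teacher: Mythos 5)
Your proposal is correct and follows essentially the same route as the paper: both reduce $\|I-\alpha A\|$ to the numerical radius of the self-adjoint operator $I-\alpha A$, use $\alpha\le\|A\|^{-1}$ together with $\langle Ax,x\rangle\le\|A\|\|x\|^2$ to see that the relevant quantity is nonnegative (so the absolute value can be dropped), and then apply strong positivity to obtain the bound $1-\alpha\gamma$. The only difference is cosmetic: you make the two-sided bound on $\langle Ax,x\rangle$ explicit, whereas the paper folds the upper estimate into a single inequality.
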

\begin{proof}
We clearly have $\|A\| > 0$. We have that, given $\alpha \in [0, \|A\|^{-1}]$, for any $x\in H$:
\[\langle (I-\alpha A)x,x \rangle=\|x\|^2-\alpha \langle Ax,x \rangle \ge \|x\|^2-\|A\|^{-1}\|A\|\|x\|^2=0\]
Thus
\[\sup_{\|x\|=1} |\langle (I-\alpha A)x,x \rangle|=\sup_{\|x\|=1} \langle (I-\alpha A)x,x \rangle\]
As
\begin{equation*}
\sup_{\|x\|=1} |\langle Ax,x\rangle|=\|A\|
\end{equation*}
we have
\begin{equation*}
\|I-\alpha A\| = \sup_{\|x\|=1} \langle (I-\alpha A)x,x \rangle=\sup_{\|x\|=1}(\|x\|^2-\alpha \langle Ax,x \rangle) \le 1-\alpha\gamma
\end{equation*}
\end{proof}

Furthermore, we shall use the following well known inequality in real Hilbert spaces: for all $x,y\in H$,
\begin{equation} \label{lem2.4eq}
\|x+y\|^2 \le \|x\|^2+2\langle y,x+y\rangle
\end{equation}

\subsection{Quantitative definitions and lemmas}

\begin{dfn}
Let $(x_n)$ be a sequence in $H$.
\begin{enumerate}[(i)]
\item Let $x\in H$. We say $\phi:\N\to\N$ is a \emph{rate of convergence} of $(x_n)$ (to $x$) if
\[\forall k\in\N \forall n\ge\phi(k) \left(\|x_n-x\|\le\frac{1}{k+1}\right)\]
\item We say $\phi:\N\to\N$ is a \emph{Cauchy modulus} of $(x_n)$ if
\[\forall k\in\N \forall n,m\ge\phi(k) \left(\|x_n-x_m\|\le\frac{1}{k+1}\right)\]
 
\item A function $\phi:\N \times \N^\N \to \N$ is a \emph{rate of metastability} of a sequence $(x_n) \subseteq H$ if
\[\forall k\in\N \forall f\in \N^\N \exists n \le \phi(k,f) \forall i,j \in [n,f(n)] \left(\|x_i-x_j\|\le\frac{1}{k+1}\right)\]
\end{enumerate}
If $(a_n)$ is a sequence of non-negative real numbers such that $\sum_{n=0}^{+\infty} a_n=+\infty$ then $\theta:\N\to\N$ is a \emph{rate of divergence} of $(a_n)$ if $\sum_{n=0}^{\theta(k)}a_n \ge k$ for all $k\in\N$.
\begin{lemma}\label{div-rate-k-1} 
Assume that $a_n \le 1$ for all $n\in\N$ and that $\theta$ is a rate of divergence of $(a_n)$. Then $\theta(k) \ge k-1$ for all $k\in\N$.
\end{lemma}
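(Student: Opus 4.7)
The plan is to bound the partial sum $\sum_{n=0}^{\theta(k)} a_n$ from above using the hypothesis $a_n \le 1$ and combine it with the defining inequality $\sum_{n=0}^{\theta(k)} a_n \ge k$ coming from $\theta$ being a rate of divergence.

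More concretely, I would first observe that the sum $\sum_{n=0}^{\theta(k)} a_n$ has exactly $\theta(k)+1$ terms, each bounded by $1$, so it is at most $\theta(k)+1$. Then I would chain this with the lower bound $k \le \sum_{n=0}^{\theta(k)} a_n$ provided by the definition of a rate of divergence, yielding $k \le \theta(k)+1$, i.e.\ $\theta(k) \ge k-1$, as required.

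There is no real obstacle: the argument is a one-line counting estimate. The only minor subtlety is being careful that the sum ranges over indices $0,1,\dots,\theta(k)$, which is $\theta(k)+1$ values, not $\theta(k)$; this is what produces the bound $k-1$ rather than $k$.
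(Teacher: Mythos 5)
Your proof is correct and rests on the same one-line counting estimate as the paper's: each of the $\theta(k)+1$ terms is at most $1$, so $k \le \sum_{n=0}^{\theta(k)} a_n \le \theta(k)+1$. The paper merely phrases this contrapositively (assuming $\theta(k)\le k-2$ and deriving $\sum_{i=0}^{\theta(k)} a_i \le k-1 < k$), which is an inessential difference.
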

\begin{proof}
Assume that for some $k\in\N$, we have that $\theta(k)<k-1$, hence $\theta(k)\leq k-2$.
Then $\sum\limits_{i=0}^{\theta(k)} a_i \le \sum\limits_{i=0}^{k-2} a_i \le k-1<k$, which is a contradiction.
\end{proof}

If $(b_n)$ is a sequence of real numbers such that $\limsup b_n \le 0$, then $\psi:\N\to\N$ is a \emph{$\limsup$-rate} of $(b_n)$ if
\[\forall k\in\N\forall n\ge\psi(k)\left(b_n \le \frac{1}{k+1}\right)\]
\end{dfn}

\subsubsection{Xu's Lemma}

In his paper \cite{Xu03}, Xu uses a lemma regarding the convergence of a sequence of non-negative real numbers. This result would become well known in fixed point theory and is often referred as Xu's lemma.
\begin{prop}[Xu's Lemma]\label{lem-Xu}\cite{Xu03}
Let $(a_n)$ be a sequence in $[0,1]$,
$(b_n)$ and $(c_n)$ sequences of reals and 
$(s_n)$ a sequence of nonnegative reals such that for all $n\in\N$,  
\begin{equation*}
s_{n+1}\leq (1-a_n)s_n + a_nb_n+c_n
\end{equation*}
and $\sum\limits_{n=0}^\infty a_n=+\infty,\ \limsup_n b_n \le 0,\ \sum\limits_{n=0}^\infty c_n < +\infty$.

Then $\lim_n s_n=0$.
\end{prop}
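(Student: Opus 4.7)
The plan is to show that $\limsup_n s_n \le \varepsilon$ for every $\varepsilon>0$, which combined with the nonnegativity of $s_n$ forces $\lim_n s_n=0$. Fix $\varepsilon>0$. From $\limsup_n b_n \le 0$ and the convergence of $\sum c_n$ I would pick an index $N_0\in\N$ so that $b_n\le\varepsilon$ for all $n\ge N_0$ and the tail $\sum_{k\ge N_0}|c_k|\le\varepsilon$, treating the $c$-series as absolutely summable (the standard reading of the hypothesis).

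Next, for $n\ge N_0$ I would unfold the recursive inequality into the closed form
\[ s_{n+1} \le \left(\prod_{k=N_0}^n (1-a_k)\right) s_{N_0} + \sum_{k=N_0}^n \left(\prod_{j=k+1}^n (1-a_j)\right)\bigl(a_k b_k + c_k\bigr) \]
by a straightforward induction on $n$, using $a_k\in[0,1]$ to keep every factor nonnegative. Three standard ingredients then control the three contributions: (a) $1-t\le e^{-t}$ together with $\sum a_n=+\infty$ gives $\prod_{k=N_0}^n(1-a_k)\le\exp\bigl(-\sum_{k=N_0}^n a_k\bigr)\to 0$; (b) the telescoping identity $\sum_{k=N_0}^n a_k\prod_{j=k+1}^n(1-a_j)=1-\prod_{k=N_0}^n(1-a_k)\le 1$, provable by a one-line induction, lets me replace $b_k$ by its upper bound $\varepsilon$ and bound the resulting middle sum by $\varepsilon$; (c) each factor $\prod_{j=k+1}^n(1-a_j)$ lies in $[0,1]$, so the $c$-term is bounded in absolute value by the tail $\sum_{k\ge N_0}|c_k|\le\varepsilon$. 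Letting $n\to\infty$ yields $\limsup_n s_n\le 2\varepsilon$, and since $\varepsilon$ was arbitrary, $s_n\to 0$.

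The step I expect to be the trickiest is the telescoping identity in (b): it is the single combinatorial fact doing the real work, and it is the only place where the normalisation $a_n\in[0,1]$ is essential. A secondary subtlety is the treatment of possibly negative values of $b_k$ --- replacing $b_k$ by $\varepsilon$ is valid only because we are producing an upper bound, and this tacitly uses $a_k\ge 0$. Once these two points are in place, the remaining estimates are routine tail bounds.
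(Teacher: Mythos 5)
Your proof is correct: unfolding the recursion into $s_{n+1} \le \prod_{k=N_0}^n (1-a_k)\, s_{N_0} + \sum_{k=N_0}^n \prod_{j=k+1}^n (1-a_j)(a_k b_k + c_k)$, killing the first term via $1-t\le e^{-t}$ and $\sum a_n=+\infty$, bounding the middle term by $\varepsilon$ through the telescoping identity, and bounding the $c$-term by the tail is exactly the standard argument; the paper does not reprove this lemma but cites Xu, whose original proof runs along the same lines, so your route coincides with it. Your reading of $\sum c_n<+\infty$ as (tail) absolute summability is the intended one --- in Xu's formulation the $c_n$ are effectively nonnegative, and in this paper the lemma is only invoked with $c_n\equiv 0$ --- so flagging that assumption closes the only potential gap.
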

As a part of proof mining research, quantitative versions of this lemma were deduced (see \cite{Pin23} for an overview). In this paper, we use a version corresponding to the particular case of the lemma with $(c_n)\equiv 0$, whose proof can be found in \cite[Proposition 4]{LeuPin21}.

\begin{prop}\label{quant-lem-Xu02-cn-0}
Let $(a_n)$ be a sequence in $[0,1]$,
$(b_n)$ a sequence of reals and 
$(s_n)$ sequence of nonnegative reals such that for all $n\in\N$,  
\begin{equation}
s_{n+1}\leq (1-a_n)s_n + a_nb_n
\end{equation}
Assume that $L\in\N^*$ is an upper bound on $(s_n)$, $\sum\limits_{n=0}^\infty a_n$ diverges with rate of divergence $\theta$ and $\limsup_n b_n \le 0$ with $\limsup$-rate $\psi$.

Then $\lim_n s_n=0$ with rate of convergence $\Sigma_0$ defined by 
\begin{equation}
\Sigma_0(k)=\theta\left(\psi(2k+1)+\lceil \ln(2L(k+1))\rceil\right)+1.
\end{equation}
\end{prop}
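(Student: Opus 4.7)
The plan is to reduce the recursion to a pure contraction by a constant shift. For a given $k\in\N$, set $\varepsilon:=\frac{1}{2(k+1)}$ and $N_0:=\psi(2k+1)$, so that $b_n\le\varepsilon$ for every $n\ge N_0$. Substituting into the hypothesis yields
\[s_{n+1}-\varepsilon\le(1-a_n)(s_n-\varepsilon)\quad\text{for all }n\ge N_0.\]
Since $1-a_n\ge 0$, iterating this inequality and using $s_{N_0}\le L$ together with the standard estimate $1-x\le e^{-x}$ gives, for every $m>N_0$,
\[s_m\le\varepsilon+L\exp\!\Bigl(-\sum_{i=N_0}^{m-1}a_i\Bigr).\]

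It therefore suffices to arrange $\sum_{i=N_0}^{m-1}a_i\ge\ln(2L(k+1))$, which in turn holds as soon as the tail sum dominates the ceiling $\lceil\ln(2L(k+1))\rceil$. To extract such a tail-sum bound from $\theta$, which only controls partial sums starting at $0$, I would evaluate it at $k':=N_0+\lceil\ln(2L(k+1))\rceil$: by definition $\sum_{i=0}^{\theta(k')}a_i\ge k'$, and since each $a_i\le 1$ forces $\sum_{i=0}^{N_0-1}a_i\le N_0$, one concludes $\sum_{i=N_0}^{\theta(k')}a_i\ge\lceil\ln(2L(k+1))\rceil$. Choosing $m-1\ge\theta(k')$, i.e.\ $m\ge\theta(k')+1=\Sigma_0(k)$, then yields $s_m\le\varepsilon+\frac{1}{2(k+1)}=\frac{1}{k+1}$, as required.

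The only delicate point I foresee is this index-shift between the global divergence estimate furnished by $\theta$ and the tail sum starting at $N_0$, resolved by the trivial bound $\sum_{i=0}^{N_0-1}a_i\le N_0$ (in the spirit of Lemma \ref{div-rate-k-1}). A minor technicality is that the iterated contraction estimate is cleanest when phrased in terms of $\max(s_n-\varepsilon,0)$ to handle the case in which $s_n$ already drops below $\varepsilon$, but this does not affect the final bound. Once the substitution $s_n\mapsto s_n-\varepsilon$ is made, the recursion becomes genuinely contractive and the remaining computation is elementary, so I do not anticipate any further obstacle.
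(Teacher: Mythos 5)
Your proof is correct and is essentially the standard argument: shift by $\varepsilon=\frac{1}{2(k+1)}$, iterate the contraction from $N_0=\psi(2k+1)$, bound the product via $1-x\le e^{-x}$, and convert $\theta$ into a tail-sum estimate using $a_i\le 1$. This matches the proof the paper relies on (it does not prove the proposition itself but cites \cite[Proposition 4]{LeuPin21}, where the same telescoping-plus-exponential-decay argument yields exactly $\Sigma_0$).
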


The condition $\sum\limits_{n=0}^\infty a_n = \infty$ is equivalent to
\[\prod_{n=0}^\infty (1-a_n)=0.\]
As such, we may also consider this condition:
\begin{equation}\label{q2'}
\begin{gathered}
{\rm A'}:\N\times \N\to\N \text{ is a monotone function satisfying}\\
\forall k, m\in \N\, \left( \prod_{i=m}^{{\rm A'}(m,k)}(1-a_i)\leq \frac{1}{k+1}\right),
\end{gathered}
\end{equation}
implying that for each $m\in\N$, $A'(m, \cdot)$ is a rate of convergence towards zero for the sequence $\left(\prod_{i=m}^{n}(1-a_i)\right)_n$. By saying that ${\rm A'}$ is monotone we mean that it is monotone in both variables,
\[
\forall k, k', m, m' \in \N\, \left( k\leq k' \land m\leq m' \to {\rm A'}(m, k)\leq {\rm A'}(m', k')\right).
\]

We have this quantitative result involving this condition. For the proof, follow \cite[Lemma 3.4]{Pin23}, replacing $\varepsilon$ with $\frac{1}{k+1}$ (see also \cite[Lemma 13]{Pin21}).

\begin{lemma}\label{Xu-lem-C2q**}
	Let $(s_n)$ be a bounded sequence of non-negative real numbers and $D\in\N$ a positive upper bound on $(s_n)$. Consider sequences of real numbers $(a_n)\subset\, [0,1]$ and $(r_n)\subset \R$ and assume the existence of monotone functions ${\rm A'}: \N\times \N \to \N$ and ${\rm R}:\N \to \N$ such that
	\begin{enumerate}[{\rm (i)}]
		\item ${\rm A'}$ satisfies \eqref{q2'},
		\item ${\rm R}$ is such that $\forall k\in \N \, \forall n\geq {\rm R}(k) \, \left( r_n \leq \frac{1}{k+1} \right)$,
	\end{enumerate}
	 If for all $n\in \N$, $s_{n+1}\leq (1-a_n)s_n+a_nr_n$,	then $(s_n)$ converges to zero and
	\begin{equation*}
	\forall k \in \N \,\forall n\geq \Sigma^*_0(k) \, \left(s_n\leq \dfrac{1}{k+1}\right),
	\end{equation*}
	\noindent where $\Sigma^*_0(k):={\rm A'}({\rm R}(2k+1), 2D(k+1)-1)+1$.
\end{lemma}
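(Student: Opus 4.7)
The strategy is to iterate the recurrence starting from the index past which $(r_n)$ is already small, and then split the resulting bound into a transient part (controlled by ${\rm A'}$) and a stationary part of order $\frac{1}{2(k+1)}$. Fix $k \in \N$ and set $N := {\rm R}(2k+1)$; by hypothesis (ii) we have $r_n \leq \frac{1}{2(k+1)}$ for every $n \geq N$. A straightforward induction on $n \geq N$ applied to $s_{n+1} \leq (1-a_n)s_n + a_n r_n$ yields
\[
s_{n+1} \leq \left(\prod_{i=N}^{n}(1-a_i)\right) s_N \,+\, \sum_{j=N}^{n} a_j\, r_j \prod_{i=j+1}^{n}(1-a_i).
\]

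Next, I would use the telescoping identity
\[
a_j \prod_{i=j+1}^{n}(1-a_i) \,=\, \prod_{i=j+1}^{n}(1-a_i) \,-\, \prod_{i=j}^{n}(1-a_i),
\]
which after summing from $j=N$ to $n$ gives $\sum_{j=N}^{n} a_j \prod_{i=j+1}^{n}(1-a_i) = 1 - \prod_{i=N}^{n}(1-a_i) \leq 1$. Combined with $r_j \leq \frac{1}{2(k+1)}$ for $j \geq N$ and $s_N \leq D$, the previous display collapses to
\[
s_{n+1} \leq D \prod_{i=N}^{n}(1-a_i) \,+\, \frac{1}{2(k+1)}.
\]

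Finally, I would invoke hypothesis (i) with $m = N$ and parameter $2D(k+1)-1$, obtaining $\prod_{i=N}^{{\rm A'}(N, 2D(k+1)-1)}(1-a_i) \leq \frac{1}{2D(k+1)}$. Since each factor $1 - a_i$ lies in $[0,1]$, the finite products $\prod_{i=N}^{n}(1-a_i)$ are non-increasing in $n$, so for every $n \geq {\rm A'}(N, 2D(k+1)-1)$ the first summand is at most $\frac{D}{2D(k+1)} = \frac{1}{2(k+1)}$, whence $s_{n+1} \leq \frac{1}{k+1}$. Shifting indices, this is precisely $s_m \leq \frac{1}{k+1}$ for every $m \geq {\rm A'}({\rm R}(2k+1), 2D(k+1)-1) + 1 = \Sigma^*_0(k)$. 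The whole argument is essentially bookkeeping once the recurrence has been unrolled; the only place where foresight is needed is selecting the index $2D(k+1)-1$ in the application of (i) so that the two error contributions $\frac{D}{2D(k+1)}$ and $\frac{1}{2(k+1)}$ sum exactly to the target $\frac{1}{k+1}$.
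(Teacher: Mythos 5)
Your argument is correct and is essentially the standard proof the paper defers to (it cites \cite[Lemma 3.4]{Pin23} and \cite[Lemma 13]{Pin21} instead of giving one): unroll the recurrence from $N={\rm R}(2k+1)$, telescope the nonnegative weights to bound the $r$-contribution by $\frac{1}{2(k+1)}$, and use ${\rm A'}(N,2D(k+1)-1)$ together with monotonicity of the partial products to make the transient term $D\prod_{i=N}^{n}(1-a_i)$ at most $\frac{1}{2(k+1)}$. The bookkeeping, including the index shift giving $\Sigma^*_0(k)$ and the handling of possibly negative $r_j$ via nonnegative weights, checks out.
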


The following is another quantitative version of Xu's lemma. This has a weaker conclusion than the previous result, in that it only gives information regarding finite terms of the sequence $(s_n)$. This is useful for obtaining rates of metastability. A proof can be seen in \cite[Lemma 2.11]{DinPin23}.
\begin{prop} \label{XuMeta}
Let $(a_n)$ be a sequence in $[0,1]$,
$(b_n), (c_n)$ sequences of real numbers and 
$(s_n)$ sequence of nonnegative real numbers such that for all $n\in\N$,  
\begin{equation}
s_{n+1}\leq (1-a_n)s_n + a_nb_n+c_n
\end{equation}
Assume that $L\in\N^*$ is an upper bound on $(s_n)$, that $\sum\limits_{n=0}^\infty a_n$ diverges with rate of 
divergence $\theta$ and that, for $k,n,q\in\N$ and for all $i\in [n,q]$:
\[c_i \le \frac{1}{3(k+1)(q+1)} \quad\text{and}\quad b_i \le \frac{1}{3(k+1)}. \]
Then
\[\forall i\in[\sigma(k,n),q]\left(s_i \le \frac{1}{k+1}\right)\]
with
\[\sigma(k,n):=\theta(n+\lceil \ln(3L(k+1))\rceil)+1.\]
\end{prop}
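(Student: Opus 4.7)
The plan is to iterate the assumed recursion to obtain a closed-form bound on $s_i$ for $i$ in the metastability window, and then estimate each of the resulting three contributions separately so that each falls below $\frac{1}{3(k+1)}$.

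First I would unfold the inequality $s_{t+1} \le (1-a_t)s_t + a_t b_t + c_t$ starting from $t=n$. A standard induction yields, for every $i\ge n$,
\[
s_i \;\le\; \prod_{t=n}^{i-1}(1-a_t)\,s_n \;+\; \sum_{j=n}^{i-1}\Bigl(\prod_{t=j+1}^{i-1}(1-a_t)\Bigr) a_j b_j \;+\; \sum_{j=n}^{i-1}\Bigl(\prod_{t=j+1}^{i-1}(1-a_t)\Bigr) c_j.
\]
I would then fix an arbitrary $i\in[\sigma(k,n),q]$ and show that each of the three summands on the right-hand side is at most $\frac{1}{3(k+1)}$. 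For the initial term I would use the standard estimate $\prod_{t=n}^{i-1}(1-a_t)\le \exp\!\bigl(-\sum_{t=n}^{i-1} a_t\bigr)$ together with the bound $s_n\le L$; since $i-1 \ge \theta\bigl(n+\lceil \ln(3L(k+1))\rceil\bigr)$ and each $a_t\le 1$ (so that $\sum_{t=0}^{n-1}a_t\le n$), the rate of divergence $\theta$ yields $\sum_{t=n}^{i-1}a_t\ge \ln(3L(k+1))$, which makes the whole first term at most $L\cdot\frac{1}{3L(k+1)}=\frac{1}{3(k+1)}$.

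For the middle summand I would invoke the telescoping identity $a_j\prod_{t=j+1}^{i-1}(1-a_t) = \prod_{t=j+1}^{i-1}(1-a_t) - \prod_{t=j}^{i-1}(1-a_t)$, so that $\sum_{j=n}^{i-1} a_j \prod_{t=j+1}^{i-1}(1-a_t) = 1 - \prod_{t=n}^{i-1}(1-a_t) \le 1$. Combining with the assumption $b_j\le \frac{1}{3(k+1)}$ (valid for $j\in[n,q-1]\subseteq[n,q]$ since $i\le q$) bounds this contribution by $\frac{1}{3(k+1)}$. For the last summand I would just drop the products (each is $\le 1$) and use that there are at most $q$ indices $j$, each satisfying $c_j\le \frac{1}{3(k+1)(q+1)}$, giving a total bound of $\frac{q}{3(k+1)(q+1)}\le\frac{1}{3(k+1)}$.

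Summing the three estimates gives $s_i\le \frac{1}{k+1}$, as required. The main technical point is ensuring the hypotheses on $b_j$ and $c_j$ are applied on the correct index range $[n,q]$ (which is why the iteration starts at $n$ and $i$ is capped at $q$); the rest is standard manipulation. The only slightly delicate step is calibrating $\sigma(k,n)$ so that $i-1$, and not $i$ itself, is $\ge \theta(\cdot)$, which accounts for the $+1$ in the definition of $\sigma$.
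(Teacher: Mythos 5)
Your proof is correct: the unfolded recursion, the estimate $\prod(1-a_t)\le\exp(-\sum a_t)$ combined with the divergence rate (using $a_t\le 1$ to discard $\sum_{t=0}^{n-1}a_t\le n$, which also guarantees $\sigma(k,n)\ge n$ so the unfolding from index $n$ is legitimate), the telescoping bound $\sum_j a_j\prod_{t>j}(1-a_t)\le 1$, and the crude count of the $c_j$-terms together give the three contributions of $\frac{1}{3(k+1)}$ each. The paper does not prove this proposition itself but cites \cite[Lemma 2.11]{DinPin23}, and your argument is essentially the same standard one used there, so there is nothing further to reconcile.
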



We also have the corresponding result with ${\rm A'}$ satisfying \eqref{q2'}. See \cite[Lemma 2.12]{DinPin23}.

\begin{prop}\label{XuMeta-C2q**}
	Let $(s_n)$ be a bounded sequence of non-negative real numbers and $D\in\N^*$ an upper bound on $(s_n)$. Consider sequences of real numbers $(a_n)\subset\, [0,1]$, $(r_n)\subset \R$ and assume that $\sum a_n=\infty$ with a function ${\rm A'}$ satisfying \eqref{q2'}. Let $k,n, q\in \N$ be given. If for all $i\in [n,q]$
	\begin{equation*}
		(i)\  s_{i+1}\leq (1-a_i)s_i+a_ib_i+c_i\,,\quad (ii)\  b_i\leq \frac{1}{3(k+1)}\,,\quad (iii)\ c_i\leq \frac{1}{3(k+1)(q+1)}, 
	\end{equation*}
	then 
	\[\forall i \in [\sigma(k,n), q] \left( s_i\leq \frac{1}{k+1}\right),\]
	where 
	\begin{equation*}
		\sigma(k,n):={\rm A'}\left(n, 3D(k+1)-1\right)+1.
	\end{equation*}
\end{prop}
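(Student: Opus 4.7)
The plan is to iterate the recursion in (i) from index $n$ to some later index $i' \in [n,q]$, obtaining an explicit bound for $s_{i'+1}$ in terms of $s_n$, the $b_j$'s, and the $c_j$'s, and then estimate each piece separately. This parallels the strategy used in Proposition \ref{XuMeta}, with the rate of divergence $\theta$ replaced by the monotone function $A'$ from condition \eqref{q2'}.

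First I would establish the standard telescoping identity
\[
\sum_{j=n}^{i'}\left(\prod_{l=j+1}^{i'}(1-a_l)\right) a_j = 1 - \prod_{j=n}^{i'}(1-a_j),
\]
which follows by writing $a_j = 1 - (1-a_j)$ and collapsing. Unfolding (i) then yields
\[
s_{i'+1} \leq \prod_{j=n}^{i'}(1-a_j)\, s_n + \sum_{j=n}^{i'}\left(\prod_{l=j+1}^{i'}(1-a_l)\right) a_j b_j + \sum_{j=n}^{i'}\left(\prod_{l=j+1}^{i'}(1-a_l)\right) c_j.
\]

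Now for $i \in [\sigma(k,n), q]$, put $i' = i - 1$, so that $i' \geq A'(n, 3D(k+1) - 1) \geq n$. The first term is controlled by condition \eqref{q2'} with parameter $3D(k+1) - 1$: since all factors lie in $[0,1]$, the product $\prod_{j=n}^{i'}(1-a_j)$ is non-increasing in $i'$ and hence at most $\frac{1}{3D(k+1)}$, giving a contribution of at most $\frac{D}{3D(k+1)} = \frac{1}{3(k+1)}$. The second term is at most $\frac{1}{3(k+1)}$ by hypothesis (ii) combined with the telescoping identity (which produces a factor $\leq 1$). The third term uses hypothesis (iii): weights are at most $1$ and there are at most $q+1$ summands, producing $\frac{1}{3(k+1)}$. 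Summing the three pieces gives $s_i \leq \frac{1}{k+1}$.

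The argument is essentially a careful bookkeeping exercise with no deep obstacles; the only subtlety worth flagging is choosing the second $A'$-argument to be $3D(k+1) - 1$, so that the factor $D$ coming from the trivial bound on $s_n$ is exactly absorbed by the product estimate $\frac{1}{3D(k+1)}$, leaving all three terms balanced at $\frac{1}{3(k+1)}$.
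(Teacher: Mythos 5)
Your argument is correct and follows essentially the same route as the source: the paper does not prove this proposition itself but defers to \cite[Lemma 2.12]{DinPin23}, whose proof is exactly this unfolding of the recursion, with the second argument $3D(k+1)-1$ of ${\rm A'}$ chosen so that the product estimate absorbs the bound $D$ on $s_n$. The only implicit step is that ${\rm A'}(n,3D(k+1)-1)\ge n$ (needed for $i'\ge n$), which indeed follows from \eqref{q2'} since an empty product would equal $1>\frac{1}{3D(k+1)}$ — the same observation the paper records for $\sigma_2^*$.
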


\subsection{Xu's iteration}\label{iterPrelim}

Let $H$ be a Hilbert space. Given $l\in\N$, let $T_0,\ldots, T_{l-1}$ be nonexpansive mappings on $H$. We define $T_n$ for $n \ge l$ as $T_{n \bmod l}$. Define
\begin{equation}
F:= \bigcap_{i=0}^{l-1} \Fix(T_i)
\end{equation}

We assume that $F \neq \emptyset$.

Let $u\in H$. Let $A$ be a self-adjoint bounded linear operator on $H$ satisfying \eqref{StrPosEq} for some $0<\gamma \le 1$ and for all $x\in H$.

Xu's iteration $(x_n)$ is defined by
\begin{equation}\label{iter}
x_0\in H,\quad x_{n+1}=(I-\alpha_{n+1}A)T_{n+1}x_n+\alpha_{n+1}u
\end{equation}
where $(\alpha_n) \subset (0,1]$.

In the sequel we prove some useful properties of Xu's iteration.

For the rest of the section, we take $p\in F$ and assume that
\begin{center}
there is $N_0\in\N$ such that $\alpha_{n+1} \le \|A\|^{-1}$ for every $n\ge N_0$,
\end{center}
\begin{lemma}\label{bound-ineq}
For every $n\ge N_0$:
\begin{align}
\|x_{n+1}-p\| &\le(1-\gamma\alpha_{n+1})\|x_n-p\|+\alpha_{n+1}\|u-Ap\|,\label{bound-ineq1}\\
\|x_n - p\| &\leq K_0, \label{bound-ineq2}
\end{align}
where $K_0\in\N^*$ is such that:
\begin{equation} \label{def-K0}
K_0 \ge \max\left\{\|x_{N_0}-p\|,\frac{\|u-Ap\|}{\gamma}\right\}.
\end{equation}

\end{lemma}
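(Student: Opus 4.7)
The plan is to prove \eqref{bound-ineq1} by a direct algebraic manipulation of the defining recurrence \eqref{iter} together with Lemma \ref{I-minus-alpha-A-lemma} and the nonexpansiveness of each $T_i$, and then to obtain \eqref{bound-ineq2} by a straightforward induction on $n \ge N_0$.

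For \eqref{bound-ineq1}, the key observation is that $p \in F$ implies $T_{n+1} p = p$ for every $n$ (since $T_{n+1}$ is one of $T_0,\ldots,T_{l-1}$). Using this together with the identity $p = (I - \alpha_{n+1} A) p + \alpha_{n+1} A p$, I would rewrite
\[
x_{n+1} - p = (I - \alpha_{n+1}A)(T_{n+1} x_n - p) + \alpha_{n+1}(u - Ap),
\]
then apply the triangle inequality, the nonexpansiveness $\|T_{n+1} x_n - p\| \le \|x_n - p\|$, and Lemma \ref{I-minus-alpha-A-lemma}, which applies precisely because the hypothesis $n \ge N_0$ gives $\alpha_{n+1} \in [0, \|A\|^{-1}]$. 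This yields $\|I - \alpha_{n+1} A\| \le 1 - \gamma \alpha_{n+1}$ and hence \eqref{bound-ineq1}.

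For \eqref{bound-ineq2}, I would proceed by induction on $n \ge N_0$. The base case is immediate from the definition of $K_0$. For the induction step, assuming $\|x_n - p\| \le K_0$, I would feed this into \eqref{bound-ineq1} and write the right-hand side as the convex combination
\[
(1 - \gamma\alpha_{n+1})\|x_n - p\| + \gamma\alpha_{n+1}\cdot\frac{\|u - Ap\|}{\gamma},
\]
which is legitimate since $\gamma\alpha_{n+1} \in [0,1]$ (indeed $\alpha_{n+1} \le \|A\|^{-1}$ and $\gamma \le \|A\|$, the latter being a direct consequence of the strong positivity condition \eqref{StrPosEq} and Cauchy--Schwarz). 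Since both quantities are bounded by $K_0$, the convex combination is also bounded by $K_0$, completing the induction.

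There is no real obstacle here — the argument is entirely routine once the correct rewriting of $x_{n+1} - p$ is identified. The only point requiring mild care is verifying that $\gamma\alpha_{n+1} \le 1$ so that the final step is a genuine convex combination; this is what justifies applying the hypothesis $\gamma \le 1$ (which is in fact not strictly needed, as the weaker $\gamma \le \|A\|$ suffices).
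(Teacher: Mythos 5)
Your proposal is correct and follows essentially the same route as the paper: the same rewriting $x_{n+1}-p=(I-\alpha_{n+1}A)(T_{n+1}x_n-p)+\alpha_{n+1}(u-Ap)$ combined with Lemma \ref{I-minus-alpha-A-lemma} and nonexpansiveness for \eqref{bound-ineq1}, and the same induction for \eqref{bound-ineq2}, where the paper's step $(1-\gamma\alpha_{n+1})K_0+\alpha_{n+1}\gamma K_0=K_0$ is exactly your convex-combination observation. Your remark that $\gamma\alpha_{n+1}\le 1$ (via $\gamma\le\|A\|$ and $\alpha_{n+1}\le\|A\|^{-1}$) just makes explicit a point the paper leaves implicit.
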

\begin{proof}
We get that for all $n\ge N_0$:
\begin{eqnarray*}
\|x_{n+1}-p\| &=&\|(I-\alpha_{n+1}A)T_{n+1}x_n+\alpha_{n+1}u-p+\alpha_{n+1}Ap- \alpha_{n+1}Ap\|\\
&=& \|(I-\alpha_{n+1}A)(T_{n+1}x_n-p)+\alpha_{n+1}(u-Ap)\|\\
&\le& (1-\gamma\alpha_{n+1})\|T_{n+1}x_n-p\|+\alpha_{n+1}\|u-Ap\|\quad\text{by Lemma \ref{I-minus-alpha-A-lemma}}\\
&\le&(1-\gamma\alpha_{n+1})\|x_n-p\|+\alpha_{n+1}\|u-Ap\|
\end{eqnarray*}
Thus, \eqref{bound-ineq1} holds.

We prove \eqref{bound-ineq2} by induction. The case $n=N_0$ is obvious. Assume that $\|x_n-p\| \le K_0$. It follows from \eqref{bound-ineq1} that:
\begin{equation*}
\|x_{n+1}-p\| \le (1-\gamma\alpha_{n+1})K_0+\alpha_{n+1}\gamma K_0=K_0
\end{equation*}
Thus, $\|x_n-p\| \le K_0$ for every $n\ge N_0$.
\end{proof}

\begin{lemma}
Let $b_A\in\N^*$ be such that $b_A\ge \|A\|$. For every $n\ge N_0,m\in\N$ and $x\in H$:
\begin{align}
\|u-AT_{n+1}x_n\| 
&\le (b_A+1)K_0,\label{uATx-bound}\\
\begin{split}
\|x_{n+l+1}-x_{n+1}\|
&\le (1-\alpha_{n+l+1}\gamma)\|x_{n+l}-x_n\|\\
&\ \ + (b_A+1)K_0|\alpha_{n+l+1}-\alpha_{n+1}|,\label{xnl1-xn1-ineq}
\end{split}\\
\begin{split}
\|x_{n+l+m}-x_{n+m}\|
&\le \|x_{n+l}-x_n\|\prod_{i=n+1}^{n+m}(1-\alpha_{i+l}\gamma)\\
&\ \ + (b_A+1)K_0\sum_{i=n+1}^{n+m}|\alpha_{i+l}-\alpha_i|,\label{xnl1-xn1-ineq2}
\end{split}\\
\begin{split}
\|x_{n+1}-x\|^2 
&\le (1-\alpha_{n+1}\gamma)\|x_n-x\|^2+2\|x_n-x\|\|T_{n+1}x-x\|\\
&\ \ +\|T_{n+1}x-x\|^2+2\alpha_{n+1}\langle u-Ax,x_{n+1}-x\rangle.\label{xn1-x-sq-ineq}
\end{split}
\end{align}
\end{lemma}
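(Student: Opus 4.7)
The four inequalities all follow by direct algebraic manipulation of the iteration \eqref{iter}, exploiting three ingredients: the periodicity $T_{n+l+1} = T_{n+1}$ coming from the convention $T_n = T_{n \bmod l}$; Lemma~\ref{I-minus-alpha-A-lemma} to bound $\|I - \alpha A\| \le 1 - \alpha\gamma$ (applicable because the standing hypothesis $n \ge N_0$ forces all $\alpha$'s appearing in the computations to lie in $[0, \|A\|^{-1}]$); and the nonexpansiveness of each $T_i$ together with the bound \eqref{bound-ineq2}.

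For \eqref{uATx-bound}, I would insert $\pm Ap$ and apply the triangle inequality to split
\[
\|u - AT_{n+1}x_n\| \le \|u - Ap\| + \|A\|\,\|T_{n+1}x_n - p\|.
\]
The first summand is at most $\gamma K_0 \le K_0$ by \eqref{def-K0} and $\gamma \le 1$; the second is at most $b_A K_0$ by nonexpansiveness (since $p \in F$) and \eqref{bound-ineq2}.

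For \eqref{xnl1-xn1-ineq}, the key identity, obtained by adding and subtracting $(I - \alpha_{n+l+1}A)T_{n+1}x_n$ and $\alpha_{n+l+1}u$ and invoking $T_{n+l+1} = T_{n+1}$, is
\[
x_{n+l+1} - x_{n+1} = (I - \alpha_{n+l+1}A)(T_{n+1}x_{n+l} - T_{n+1}x_n) + (\alpha_{n+l+1} - \alpha_{n+1})(u - AT_{n+1}x_n).
\]
Taking norms, Lemma~\ref{I-minus-alpha-A-lemma} and nonexpansiveness bound the first term by $(1 - \alpha_{n+l+1}\gamma)\|x_{n+l} - x_n\|$, and \eqref{uATx-bound} bounds the second by $(b_A+1)K_0\,|\alpha_{n+l+1} - \alpha_{n+1}|$. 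Inequality \eqref{xnl1-xn1-ineq2} then follows by induction on $m$: the base $m = 0$ is trivial (empty product $= 1$, empty sum $= 0$), and the inductive step applies \eqref{xnl1-xn1-ineq} at index $n+m$ and majorizes the factor $(1 - \alpha_{n+l+m+1}\gamma) \le 1$ multiplying the inductive hypothesis's sum.

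For \eqref{xn1-x-sq-ineq}, I decompose
\[
x_{n+1} - x = (I - \alpha_{n+1}A)(T_{n+1}x_n - x) + \alpha_{n+1}(u - Ax),
\]
and apply the Hilbert-space inequality \eqref{lem2.4eq} with $y = (I - \alpha_{n+1}A)(T_{n+1}x_n - x)$ and $z = \alpha_{n+1}(u - Ax)$. This immediately produces the inner-product term $2\alpha_{n+1}\langle u - Ax, x_{n+1} - x\rangle$. Lemma~\ref{I-minus-alpha-A-lemma} together with the trivial estimate $(1 - \alpha_{n+1}\gamma)^2 \le 1 - \alpha_{n+1}\gamma$ (valid as $\alpha_{n+1}\gamma \in [0,1]$) bounds $\|y\|^2$ by $(1 - \alpha_{n+1}\gamma)\|T_{n+1}x_n - x\|^2$. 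Expanding $\|T_{n+1}x_n - x\|^2 \le (\|x_n - x\| + \|T_{n+1}x - x\|)^2$ via the triangle inequality and nonexpansiveness of $T_{n+1}$, and finally dropping the factor $(1 - \alpha_{n+1}\gamma) \le 1$ on the cross term and the $\|T_{n+1}x - x\|^2$ term, yields the stated bound. No step is conceptually hard; the only care required is in the bookkeeping of indices (periodicity of $T$) and confirming admissibility of $\alpha_{n+1}, \alpha_{n+l+1}$ for Lemma~\ref{I-minus-alpha-A-lemma}, both automatic from $n \ge N_0$.
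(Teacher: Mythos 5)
Your proposal is correct and follows essentially the same route as the paper: the same $\pm Ap$ splitting for \eqref{uATx-bound}, the same decomposition $(I-\alpha_{n+l+1}A)(T_{n+l+1}x_{n+l}-T_{n+1}x_n)+(\alpha_{n+l+1}-\alpha_{n+1})(u-AT_{n+1}x_n)$ with periodicity and Lemma~\ref{I-minus-alpha-A-lemma} for \eqref{xnl1-xn1-ineq}, induction on $m$ for \eqref{xnl1-xn1-ineq2}, and the same application of \eqref{lem2.4eq} plus $(1-\alpha_{n+1}\gamma)^2\le 1-\alpha_{n+1}\gamma$ and nonexpansiveness for \eqref{xn1-x-sq-ineq}. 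The only differences are cosmetic (you make explicit the base/step of the induction and the squaring estimate that the paper leaves implicit).
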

\begin{proof}
Let $n\ge N_0$ and $x\in H$. By \eqref{bound-ineq2},
\begin{align*}
\|u-AT_{n+1}x_n\| &\le \|u-Ap\|+\|A\|\|p-T_{n+1}x_n\| \le \|u-Ap\|+\|A\|\|p-x_n\| \\
&\le \gamma K_0 +\|A\| K_0 \le (b_A+1)K_0.
\end{align*}

\begin{eqnarray*}
&&\|x_{n+l+1}-x_{n+1}\|\\
 &=& \|(I-\alpha_{n+l+1}A)T_{n+l+1}x_{n+l}+\alpha_{n+l+1}u-((I-\alpha_{n+1}A)T_{n+1}x_n+\alpha_{n+1}u)\|\\
&=& \|(I-\alpha_{n+l+1}A)(T_{n+l+1}x_{n+l}-T_{n+1}x_n)+(\alpha_{n+l+1}-\alpha_{n+1})(u-AT_{n+1}x_n)\|\\
&\le& (1-\alpha_{n+l+1}\gamma)\|T_{n+l+1}x_{n+l}-T_{n+1}x_n\|+(b_A+1)K_0|\alpha_{n+l+1}-\alpha_{n+1}|\\
&&\text{by Lemma \ref{I-minus-alpha-A-lemma} and \eqref{uATx-bound}}\\
&\le& (1-\alpha_{n+l+1}\gamma)\|x_{n+l}-x_n\|+ (b_A+1)K_0|\alpha_{n+l+1}-\alpha_{n+1}|\quad\text{since $T_{n+l}=T_n$}.
\end{eqnarray*}

By induction, from \eqref{xnl1-xn1-ineq} we get \eqref{xnl1-xn1-ineq2}.

\begin{eqnarray*}
\|x_{n+1}-x\|^2 &=& \|(I-\alpha_{n+1}A)(T_{n+1}x_n-x)+\alpha_{n+1}(u-Ax)\|^2\\
&\le& \|(I-\alpha_{n+1}A)(T_{n+1}x_n-x)\|^2+2\alpha_{n+1}\langle u-Ax,x_{n+1}-x\rangle\\
&& \text{by } \eqref{lem2.4eq}\\
&\le& (1-\alpha_{n+1}\gamma)\|T_{n+1}x_n-x\|^2+2\alpha_{n+1}\langle u-Ax,x_{n+1}-x\rangle\\
&&\text{by Lemma \ref{I-minus-alpha-A-lemma}}\\
&\le& (1-\alpha_{n+1}\gamma)(\|T_{n+1}x_n-T_{n+1}x\|+\|T_{n+1}x-x\| )^2\\
&&+2\alpha_{n+1}\langle u-Ax,x_{n+1}-x\rangle\\
&\le& (1-\alpha_{n+1}\gamma)\|x_n-x\|^2+2\|x_n-x\|\|T_{n+1}x-x\|\\
&&+\|T_{n+1}x-x\|^2+2\alpha_{n+1}\langle u-Ax,x_{n+1}-x\rangle.
\end{eqnarray*}

\end{proof}

\subsection{Quantitative hypotheses}

Xu used the following hypotheses on the parameter sequences $(\alpha_n)$:
\begin{align*}
\text{(C1)} \quad & \lim_n \alpha_n=0&\quad\text{(C2)} \quad & \sum_{n=0}^{+\infty} \alpha_n = +\infty\\
\text{(C3)} \quad & \lim_n \frac{\alpha_n}{\alpha_{n+l}}=1&\quad\text{(C4)} \quad & \sum_{n=0}^{+\infty} |\alpha_n - \alpha_{n+l}| <+\infty
\end{align*}

As we prove quantitative versions of the results in our paper, we use quantitative conditions:
\begin{eqnarray*}
\text{(C1q)} \quad & \lim_n \alpha_n=0 \text{ with convergence rate } \sigma_1\\
\text{(C2q)} \quad & \sum_{n=0}^{+\infty} \alpha_n = +\infty \text{ with divergence rate } \sigma_2\\
\text{(C2q*)} \quad &\prod_{n=m}^{+\infty} (1-\gamma\alpha_n) = 0 \text{ with rate of convergence } \sigma_2^*(m,\cdot) \text{ for } m\in\N\\
\text{(C3q)} \quad & \lim_n \frac{\alpha_n}{\alpha_{n+l}}=1 \text{ with convergence rate } \sigma_3\\
\text{(C4q)} \quad & \sum_{n=0}^{+\infty} |\alpha_n - \alpha_{n+l}| <+\infty \text{ with Cauchy modulus } \sigma_4
\end{eqnarray*}
We make natural monotonicity assumptions on the rates, i.e., that the $\sigma_i$ and $\sigma_2^*$ are increasing.

\begin{lemma}\label{lem-nalpha}
Assume that (C1q) holds. Let:
\[\nalpha:=\max\{\sigma_1(b_A-1)-1,0\}.\]
Then for every $n \ge \nalpha$ we have:
\[\alpha_{n+1} \le\|A\|^{-1}\]
\end{lemma}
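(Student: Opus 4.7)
The plan is to unfold the quantitative hypothesis (C1q) at a carefully chosen value of $k$ and compare with the bound on $\|A\|$.

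Recall that (C1q) asserts: for every $k \in \N$ and every $n \geq \sigma_1(k)$, one has $\alpha_n \leq \frac{1}{k+1}$. Since $b_A \in \N^*$ satisfies $b_A \geq \|A\|$, we have $\frac{1}{b_A} \leq \|A\|^{-1}$, so it is enough to produce a threshold beyond which $\alpha_{n+1} \leq \frac{1}{b_A}$. Writing $\frac{1}{b_A} = \frac{1}{(b_A - 1) + 1}$ and instantiating (C1q) at $k := b_A - 1$ (which is a nonnegative integer because $b_A \geq 1$), we obtain $\alpha_m \leq \frac{1}{b_A}$ for every $m \geq \sigma_1(b_A - 1)$.

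Now, given $n \geq Q = \max\{\sigma_1(b_A - 1) - 1,\, 0\}$, we have in particular $n + 1 \geq \sigma_1(b_A - 1)$, so applying the previous observation with $m = n+1$ yields $\alpha_{n+1} \leq \frac{1}{b_A} \leq \|A\|^{-1}$, as desired. The $\max$ with $0$ in the definition of $Q$ is only there to ensure $Q \in \N$ in the degenerate case $\sigma_1(b_A - 1) = 0$.

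This is a direct instantiation argument, so there is no real obstacle: the only care required is keeping track of the off-by-one shift between $\alpha_n$ and $\alpha_{n+1}$ in the definition of $Q$, and handling the edge case where $\sigma_1(b_A - 1)$ might be $0$.
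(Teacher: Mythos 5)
Your argument is correct and is essentially the paper's own proof: instantiate the convergence rate $\sigma_1$ at $k=b_A-1$ and use $b_A\ge\|A\|$ to pass from $\alpha_{n+1}\le\frac{1}{b_A}$ to $\alpha_{n+1}\le\|A\|^{-1}$, with the shift by one in the definition of $\nalpha$ handled exactly as in the paper. No issues to report.
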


\begin{proof}
Take $n \ge \nalpha$. Then:
\[\alpha_{n+1} \le \frac{1}{b_A-1+1} \le \|A\|^{-1}\]
\end{proof}

\begin{lemma}\label{bound-lem}
Assume that (C1q) holds. Let $p\in F$ and $K\in\N^*$ be such that
\begin{equation} \label{def-K}
K \ge \max\left\{\|x_{\nalpha}-p\|,\frac{\|u-Ap\|}{\gamma}\right\}.
\end{equation}
Then $\|x_n-p\| \le K$ for all $n \ge \nalpha$.
\end{lemma}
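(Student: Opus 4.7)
The plan is to observe that this lemma is an immediate instantiation of the bound \eqref{bound-ineq2} in Lemma \ref{bound-ineq}, with the parameter $N_0$ chosen to be $Q$. Once we know the hypothesis of Lemma \ref{bound-ineq} holds at $N_0 := Q$, namely that $\alpha_{n+1} \le \|A\|^{-1}$ for all $n \ge Q$, and once we know that the assumed $K$ satisfies the constraint \eqref{def-K0} placed on $K_0$, the conclusion $\|x_n - p\| \le K$ for all $n \ge Q$ follows directly.

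First I would verify the hypothesis on $(\alpha_n)$: under (C1q), Lemma \ref{lem-nalpha} tells us precisely that $\alpha_{n+1} \le \|A\|^{-1}$ whenever $n \ge Q$. So we may legitimately take $N_0 := Q$ in the setup of Section \ref{iterPrelim} and apply Lemma \ref{bound-ineq}. Next I would note that \eqref{def-K} requires $K \ge \max\{\|x_Q - p\|, \|u - Ap\|/\gamma\}$, which is literally the hypothesis \eqref{def-K0} on $K_0$ with $N_0 = Q$. Therefore $K$ is an admissible choice for the constant $K_0$ in Lemma \ref{bound-ineq}, and invoking \eqref{bound-ineq2} yields $\|x_n - p\| \le K$ for every $n \ge Q$.

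There is no real obstacle here; the lemma is just the specialization of Lemma \ref{bound-ineq} obtained by eliminating the abstract parameter $N_0$ in favour of the quantitative bound $Q = \max\{\sigma_1(b_A - 1) - 1, 0\}$ produced by Lemma \ref{lem-nalpha}. The only thing one must be mildly careful about is that $Q$ is well-defined as a natural number (hence the $\max$ with $0$), so that $\|x_Q - p\|$ makes sense as the base case for the induction implicit in \eqref{bound-ineq2}. Consequently the proof will be essentially a two-line appeal to Lemmas \ref{lem-nalpha} and \ref{bound-ineq}.
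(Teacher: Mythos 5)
Your proof is correct and is essentially identical to the paper's: the paper also invokes Lemma \ref{lem-nalpha} to get $\alpha_{n+1}\le\|A\|^{-1}$ for $n\ge Q$ and then applies \eqref{bound-ineq2} with $N_0=Q$ and $K_0=K$. Nothing is missing.
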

\begin{proof}
By Lemma \ref{lem-nalpha}, we have $\alpha_{n+1} \le\|A\|^{-1}$ for every $n \ge \nalpha$. We may thus apply \eqref{bound-ineq2}, taking $N_0=\nalpha$ and $K_0=K$.
\end{proof}

If (C1q) holds, we may, analogously, use any inequality from Section \ref{iterPrelim}, taking $N_0=\nalpha$ and $K_0=K$.

\section{Connection with Körnlein's results}

In the following, we show that for $\|A\|< 2$, Xu’s iteration is a particular case of an iteration due to Yamada. Körnlein \cite{Kor,KorTh} computed rates of metastability for Yamada’s iteration. Hence, for $\|A\|< 2$, these rates are also rates for Xu’s iteration.

We also show that Xu's iteration is not necessarily a particular case of Yamada's iteration if $\|A\| \ge 2$, therefore Yamada's and Körnlein's results are not applicable in this case.

The quadratic minimization problem (P) associated to the operator $A$ is given by \eqref{Prob}.
As pointed out by Xu, the problem (P) has a unique solution $x^*$ and, furthermore, (P) has the following variational characterization (\cite[Lemma 2.3.]{Xu03}):

\begin{equation} \label{VIP}
\langle u-Ax^*, y-x^*\rangle \le 0\quad\text{for all $y\in F$}.
\end{equation}

We consider the variational inequality problem (VIP) consisting in finding $x^*\in F$ such that,
\[\forall y\in F\ \langle \mathcal{F}x^*, y-x^*\rangle \ge 0\]
for some mapping $\mathcal{F}$ in $H$.

The following iteration was considered by Yamada \cite{Yam} in connection with the VIP: 
\begin{equation}\label{iterYam}
v_0\in H,\quad v_{n+1}=(1-\alpha_{n+1})T_{n+1}v_n+\alpha_{n+1}f(T_{n+1}v_n)
\end{equation}
where $f:H\to H$ is a contraction.


\begin{lemma}
If $\|I-A\|<1$, Xu's iteration is a particular case of Yamada's iteration \eqref{iterYam}.
\end{lemma}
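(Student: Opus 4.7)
The plan is to rewrite Xu's update in Yamada's form and read off the required contraction $f$. Setting $y := T_{n+1}x_n$, I want the identity
\[
(1-\alpha_{n+1})y + \alpha_{n+1}f(y) \;=\; (I-\alpha_{n+1}A)y + \alpha_{n+1}u
\]
to hold for all $y \in H$. Expanding the right-hand side and cancelling $y$ gives $\alpha_{n+1}f(y) = \alpha_{n+1}((I-A)y + u)$, which forces
\[
f(y) \;:=\; (I-A)y + u.
\]
Crucially, this choice of $f$ does not depend on $n$ or on $\alpha_{n+1}$, so with $v_0 := x_0$ an immediate induction shows that $v_n = x_n$ for all $n$; Xu's iteration is then literally the instance of Yamada's iteration \eqref{iterYam} associated to this $f$.

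It then remains to verify that $f$ is a contraction in the sense required by Yamada. Since $f(x) - f(y) = (I-A)(x-y)$, one has $\|f(x) - f(y)\| \le \|I-A\|\,\|x-y\|$, and the hypothesis $\|I-A\| < 1$ supplies a Lipschitz constant strictly below $1$.

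There is no real obstacle here: once one guesses (or simply solves for) the right $f$, the argument reduces to the algebraic identity above plus a one-line operator-norm estimate. The only subtlety worth flagging is that the contraction hypothesis needs $\|I-A\|$ to be \emph{strictly} less than $1$, which is exactly the condition stated; under the standing assumption $0 < \gamma \le \|A\|$ of Section \ref{iterPrelim} this is moreover equivalent to $\|A\| < 2$, matching the threshold alluded to at the start of the section.
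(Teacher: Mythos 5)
Your proof is correct and follows essentially the same route as the paper: define $f(x):=(I-A)x+u$, check via the algebraic identity that Yamada's update with this $f$ reproduces Xu's update, and note that $\|f(x)-f(y)\|\le\|I-A\|\,\|x-y\|$ makes $f$ a contraction when $\|I-A\|<1$. Your closing aside that $\|I-A\|<1$ is equivalent to $\|A\|<2$ is not needed for the lemma and would require a short spectral argument (using self-adjointness and strong positivity) to justify, but it does not affect the proof.
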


\begin{proof}
Let $f(x):=(I-  A)x+  u$. Then, for all $x,y\in H$, $\|f(x)-f(y)\|=\|(I-  A)x+  u-(I-  A)y-  u  \|=\|(I-A)(x-y)\| \le \|I-A\|\|x-y\|$. As $\|I-A\|<1$, we have that $f$ is a contraction.
It follows that, for all $n\in\N$,
\begin{align*}
v_{n+1}&=(1-\alpha_{n+1})T_{n+1}v_n+\alpha_{n+1}((I-  A)T_{n+1}v_n+  u)\\
&=T_{n+1}v_n-\alpha_{n+1}T_{n+1}v_n+\alpha_{n+1}T_{n+1}v_n-\alpha_{n+1}  AT_{n+1}v_n+\alpha_{n+1}  u\\
&=T_{n+1}v_n-\alpha_{n+1}  (AT_{n+1}v_n-u)\\
&=(I-\alpha_{n+1}  A)T_{n+1}v_n+\alpha_{n+1} u
\end{align*}
\end{proof}

\begin{obs}

We then show that Xu's iteration is not necessarily a particular case of Yamada's iteration if $\|A\| \ge 2$.

From the above equalities we get that, if Xu's iteration is a particular case of Yamada's iteration, then $f$ is such that, for all $n\in\N$,
\[(1-\alpha_{n+1})T_{n+1}v_n+\alpha_{n+1}((I-  A)T_{n+1}v_n+  u)=(1-\alpha_{n+1})T_{n+1}v_n+\alpha_{n+1}f(T_{n+1}v_n),\]
that is,
\[f(T_{n+1}v_n)=(I-  A)T_{n+1}v_n+  u.\]
This gives
\[\|f(T_{n+1}v_n)-f(T_{n+2}v_{n+1})\|\le\|I-A\|\|T_{n+1}v_n-T_{n+2}v_{n+1}\|.\]
Now assume we can take $v_0$ such that
\[\|(I-A)(T_1v_0-T_2v_1)\|=\|I-A\|\|T_1v_0-T_2v_1\|\]
which leads to the equality for the previous inequality. Thus, if $\|I-A\|\ge 1$, $f$ may not be a contraction. This is a contradiction. We conclude that if $\|I-A\|\ge 1$, Xu's iteration is not necessarily a particular case of Yamada's iteration. This is the case if $\|A\|\ge 2$, since then $\|I-A\|\ge \|A\|-\|I\| \ge 1$.
\end{obs}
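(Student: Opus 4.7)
The plan is to argue by contradiction: I would suppose that Xu's iteration always coincides with some Yamada iteration \eqref{iterYam} for a genuine contraction $f:H\to H$, and then exhibit a specific choice of parameters with $\|A\|\ge 2$ for which no such $f$ can exist.

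\textbf{Step 1 (forced form of $f$).} First I would equate the two update rules at step $n+1$. Yamada's rule reads $v_{n+1}=(1-\alpha_{n+1})T_{n+1}v_n+\alpha_{n+1}f(T_{n+1}v_n)$ while Xu's reads $v_{n+1}=(I-\alpha_{n+1}A)T_{n+1}v_n+\alpha_{n+1}u$. Subtracting the common term $(1-\alpha_{n+1})T_{n+1}v_n$ from both sides and cancelling $\alpha_{n+1}>0$ forces
\[f(T_{n+1}v_n) = (I-A)T_{n+1}v_n + u,\]
so $f$ must agree with the affine map $x\mapsto (I-A)x+u$ on every orbit point.

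\textbf{Step 2 (exhibiting the obstruction).} By the reverse triangle inequality, $\|A\|\ge 2$ implies $\|I-A\|\ge\|A\|-\|I\|\ge 1$. I would then pick a concrete instance in which two distinct orbit points $x=T_1v_0$ and $y=T_2v_1$ lie along a direction that attains (or nearly attains) $\|I-A\|$. The cleanest choice is one-dimensional: take $H=\R$, $A=2I$, $l=1$, $T_0=I$, $u=0$, and any nonzero $v_0$ with the $\alpha_n$'s small enough to keep consecutive iterates nonzero and distinct. Since $I-A$ then acts as multiplication by $-1$, for any two distinct orbit points $x,y$ one gets
\[\|f(x)-f(y)\| = \|(I-A)(x-y)\| = \|I-A\|\,\|x-y\|\ge \|x-y\|,\]
contradicting that $f$ is a strict contraction.

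\textbf{Main obstacle.} The subtle point is arranging, in a general Hilbert space, two orbit points that (nearly) saturate the operator norm of $I-A$: in dimension one this is automatic, but in higher dimensions one must pick $v_0$ inside a maximising subspace for $I-A$ and choose the $T_i$ so as to preserve that subspace (e.g.\ identities or orthogonal projections onto it). This is essentially what makes the informal "assume we can take $v_0$" step rigorous; the remaining norm inequality $\|I-A\|\ge \|A\|-1$ is then immediate.
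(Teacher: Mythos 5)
Your proposal is correct, and its first step (equating the two update rules and cancelling $\alpha_{n+1}>0$ to force $f(T_{n+1}v_n)=(I-A)T_{n+1}v_n+u$ on orbit points) is exactly the paper's. Where you diverge is in how the failure of contractivity is exhibited: the paper keeps $A$ general with $\|I-A\|\ge 1$ and informally \emph{assumes} one can choose $v_0$ so that $\|(I-A)(T_1v_0-T_2v_1)\|=\|I-A\|\,\|T_1v_0-T_2v_1\|$, i.e.\ that the operator norm is attained along an orbit difference, whereas you instantiate a concrete counterexample ($H=\R$, $A=2I$, $l=1$, $T_0=I$, $u=0$, $v_0\neq 0$), for which $I-A=-I$ and any two distinct orbit points $v_0\neq v_1$ give $\|f(v_0)-f(v_1)\|=\|v_0-v_1\|$, contradicting that $f$ has Lipschitz constant $<1$. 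Your route buys rigour: it fully establishes the ``not necessarily'' claim (one non-representable instance with $\|A\|\ge 2$ suffices) and sidesteps the norm-attainment issue that makes the paper's ``assume we can take $v_0$'' step informal — an issue you correctly flag, and which in general can be handled by only \emph{nearly} attaining the norm (e.g.\ with $T_0=I$, $u=0$ and $v_0=A^{-1}w$ for a unit vector $w$ with $\|(I-A)w\|$ exceeding the putative contraction constant, so that $v_0-v_1=\alpha_1 w$). The paper's version, conditional on attainment, gestures at the obstruction for an arbitrary $A$ with $\|I-A\|\ge 1$ rather than a single instance, but as a proof of the stated remark your argument is the tighter one.
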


We now show how to apply Yamada's approach to the problem (P), introducing a particular case of Yamada's iteration, which turns out to be also a particular case of  Xu's iteration. 

We first note that (P) is a particular case of the problem
\[\min_{x\in F} \theta(x)\]
for some convex function $\theta: H\to \R$ twice differentiable on $H$ such that there exists $m,M>0$ such that, for all $x,v\in H$:
\[m\|v\|^2 \le \langle \theta''(x)v,v\rangle \le M\|v\|^2.\]
This, in turn, is a particular case of the VIP \cite{DeuYam}.
Deutsch and Yamada \cite{DeuYam} had introduced an iteration for this problem (also used by Xu \cite{Xu03}):
\begin{equation}\label{iterConv}
v_0\in H,\quad v_{n+1}=T_{n+1}v_n-\alpha_{n+1}\mu \theta'(T_{n+1}v_n)
\end{equation}
where $0<\mu<\frac{2}{M}$.
\begin{prop}
The iteration \eqref{iterConv} is a particular case of \eqref{iterYam}.
\end{prop}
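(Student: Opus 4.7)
The plan is to match \eqref{iterConv} to the shape of \eqref{iterYam} by identifying the correct contraction $f$. Comparing the two right-hand sides forces
\[\alpha_{n+1}f(T_{n+1}v_n)=\alpha_{n+1}T_{n+1}v_n-\alpha_{n+1}\mu\theta'(T_{n+1}v_n),\]
which, for $\alpha_{n+1}>0$, is solved by the natural choice $f(x):=x-\mu\theta'(x)$. With this $f$, the coincidence of \eqref{iterConv} with \eqref{iterYam} is a purely algebraic rearrangement, exactly parallel to the computation already carried out in the preceding lemma for $f(x)=(I-A)x+u$.

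The real content is showing that $f$ is a contraction on $H$. I would expand
\[\|f(x)-f(y)\|^2=\|x-y\|^2-2\mu\langle x-y,\theta'(x)-\theta'(y)\rangle+\mu^2\|\theta'(x)-\theta'(y)\|^2\]
and bound the mixed term and the gradient-difference term using the hypothesis on $\theta''$. The lower bound $m\|v\|^2\le\langle\theta''(x)v,v\rangle$ yields strong monotonicity $\langle x-y,\theta'(x)-\theta'(y)\rangle\ge m\|x-y\|^2$ via integration along the segment from $y$ to $x$, while the upper bound yields $M$-Lipschitz continuity of $\theta'$, from which the Baillon--Haddad (co-coercivity) inequality $\|\theta'(x)-\theta'(y)\|^2\le M\langle\theta'(x)-\theta'(y),x-y\rangle$ is available since $\theta$ is convex.

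Substituting first the Baillon--Haddad bound and then the strong-monotonicity bound leads to
\[\|f(x)-f(y)\|^2\le\bigl(1-\mu m(2-\mu M)\bigr)\|x-y\|^2,\]
where the assumption $0<\mu<2/M$ is precisely what makes the coefficient $\mu M-2$ of the inner product negative, so the lower bound on $\langle x-y,\theta'(x)-\theta'(y)\rangle$ may be applied with the correct sign. Since $m>0$ and $2-\mu M\in(0,2)$, the constant $1-\mu m(2-\mu M)$ sits strictly in $[0,1)$, which exhibits $f$ as a contraction with Lipschitz constant $\sqrt{1-\mu m(2-\mu M)}$.

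The subtle point, and the one I expect to be the main obstacle, is the direction in which the two bounds on $\theta'$ are applied. Using the crude inequality $\|\theta'(x)-\theta'(y)\|^2\le M^2\|x-y\|^2$ forces the much more restrictive range $\mu<2m/M^2$; invoking co-coercivity rather than plain Lipschitz continuity of $\theta'$ is what recovers the full range $\mu<2/M$ under which the Deutsch--Yamada iteration \eqref{iterConv} is stated.
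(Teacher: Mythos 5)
Your proof is correct and its skeleton matches the paper's: both take $f(x)=x-\mu\theta'(x)$ (the paper writes this as $f=-\psi'$ with $\psi(x)=\mu\theta(x)-\tfrac{1}{2}\|x\|^2$) and conclude with the same one-line algebraic rearrangement turning \eqref{iterYam} into \eqref{iterConv}. The genuine difference is in how the contraction property of $f$ is justified: the paper simply cites \cite[Lemma 3.5]{DeuYam}, whereas you prove it from scratch by expanding $\|f(x)-f(y)\|^2$ and combining strong monotonicity $\langle x-y,\theta'(x)-\theta'(y)\rangle\ge m\|x-y\|^2$ (from the lower Hessian bound) with the Baillon--Haddad co-coercivity $\|\theta'(x)-\theta'(y)\|^2\le M\langle\theta'(x)-\theta'(y),x-y\rangle$ (from convexity and $M$-Lipschitz continuity of $\theta'$), obtaining the constant $\sqrt{1-\mu m(2-\mu M)}<1$ on the full range $0<\mu<2/M$; your remark that the crude bound $\|\theta'(x)-\theta'(y)\|\le M\|x-y\|$ would only cover $\mu<2m/M^2$ is exactly the right point. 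What each route buys: the paper's is shorter and defers the analytic content to the literature, while yours is self-contained and makes transparent why $0<\mu<2/M$ suffices. A still more elementary variant, essentially Deutsch--Yamada's own argument, writes $f(x)-f(y)=\int_0^1\bigl(I-\mu\theta''(y+t(x-y))\bigr)(x-y)\,dt$ and uses that $I-\mu\theta''(z)$ is self-adjoint with spectrum contained in $[1-\mu M,1-\mu m]$, hence has norm at most $\max\{1-\mu m,\mu M-1\}<1$ for $0<\mu<2/M$, avoiding Baillon--Haddad altogether.
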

\begin{proof}
We define $\psi(x):= \mu \theta(x)-\frac{1}{2}\|x\|^2$. Then $\psi'(x)=\mu\theta'(x)-x$. It is known \cite[Lemma 3.5]{DeuYam} that $\psi'$ is a contraction for $0<\mu<\frac{2}{M}$. We can thus take $f:=-\psi'$ in \eqref{iterYam}:
\begin{align*}
v_{n+1}&=(1-\alpha_{n+1})T_{n+1}v_n-\alpha_{n+1}\psi'(T_{n+1}v_n)\\
&=(1-\alpha_{n+1})T_{n+1}v_n-\alpha_{n+1}(\mu\theta'(T_{n+1}v_n)-T_{n+1}v_n)\\
&=(1-\alpha_{n+1})T_{n+1}v_n-\alpha_{n+1}\mu\theta'(T_{n+1}v_n)+\alpha_{n+1}T_{n+1}v_n\\
&=T_{n+1}v_n-\alpha_{n+1}\mu\theta'(T_{n+1}v_n)
\end{align*}
\end{proof}
We can thus apply Yamada's approach to the problem (P) by taking $\theta(x):=\frac{1}{2}\langle Ax,x\rangle-\langle x,u\rangle$ in \eqref{iterConv}.

\begin{prop}
Taking $\theta(x):=\frac{1}{2}\langle Ax,x\rangle-\langle x,u\rangle$ in \eqref{iterConv} we obtain:
\begin{equation}\label{iterYamQuad}
v_0\in H,\quad v_{n+1}=(I-\alpha_{n+1}\mu A)T_{n+1}v_n+\alpha_{n+1}\mu u
\end{equation}
with $0<\mu<\frac{2}{\|A\|}$.

\end{prop}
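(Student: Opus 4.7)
The plan is to verify that $\theta(x):=\frac{1}{2}\langle Ax,x\rangle-\langle x,u\rangle$ genuinely fits the framework of \eqref{iterConv} (that is, it is a convex twice-differentiable function with the required two-sided bounds on $\theta''$), compute $\theta'$ explicitly using self-adjointness of $A$, and then substitute this expression into \eqref{iterConv}. The substitution itself is a two-line rearrangement, so the mild work is in justifying the admissible range of $\mu$.

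First I would compute the derivatives. Using that $A$ is self-adjoint and bounded linear, a direct expansion of $\theta(x+h)-\theta(x)$ gives $\theta'(x)=Ax-u$ and $\theta''(x)=A$ (viewed as a constant bilinear form). Strong positivity \eqref{StrPosEq} yields $\langle \theta''(x)v,v\rangle=\langle Av,v\rangle\ge \gamma\|v\|^2$, while the operator norm bound gives $\langle Av,v\rangle\le \|A\|\,\|v\|^2$. Hence $\theta$ satisfies the hypothesis of the Deutsch--Yamada framework with constants $m=\gamma$ and $M=\|A\|$, so the admissible range $0<\mu<\tfrac{2}{M}$ becomes exactly $0<\mu<\tfrac{2}{\|A\|}$, as claimed.

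Then I would substitute $\theta'(T_{n+1}v_n)=AT_{n+1}v_n-u$ into \eqref{iterConv}:
\begin{align*}
v_{n+1} &= T_{n+1}v_n-\alpha_{n+1}\mu\,\theta'(T_{n+1}v_n) \\
&= T_{n+1}v_n-\alpha_{n+1}\mu(AT_{n+1}v_n-u) \\
&= (I-\alpha_{n+1}\mu A)T_{n+1}v_n+\alpha_{n+1}\mu u,
\end{align*}
which is precisely \eqref{iterYamQuad}.

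There is no real obstacle here; the statement is essentially a verification. The only subtle point worth remarking on is the identification $M=\|A\|$ as the sharp upper constant, which follows because $\sup_{\|v\|=1}\langle Av,v\rangle=\|A\|$ when $A$ is self-adjoint (and positive, as is the case here), a fact already invoked in the proof of Lemma \ref{I-minus-alpha-A-lemma}.
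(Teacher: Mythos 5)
Your proposal is correct and follows essentially the same route as the paper: compute $\theta'(x)=Ax-u$, substitute into \eqref{iterConv}, rearrange, and take $M=\|A\|$ from $\langle Av,v\rangle\le\|A\|\|v\|^2$ (with $m=\gamma$ implicit from strong positivity). The extra remark about sharpness of $M=\|A\|$ is harmless but unnecessary, since only an upper bound on $\langle\theta''(x)v,v\rangle$ is needed for the admissible range of $\mu$.
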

\begin{proof}
We have $\theta'(x)=Ax-u$. Thus from \eqref{iterConv} we get
\begin{align*}
v_{n+1}&=T_{n+1}v_n-\alpha_{n+1}\mu (AT_{n+1}v_n-u)\\
&=T_{n+1}v_n-\alpha_{n+1}\mu AT_{n+1}v_n+\alpha_{n+1}\mu u\\
&=(I-\alpha_{n+1}\mu A)T_{n+1}v_n+\alpha_{n+1}\mu u
\end{align*}
We take $M=\|A\|$ since $\theta''(x)=A$ and, for every $v\in H$,
\[\langle Av,v\rangle \le \|A\|\|v\|^2.\]
\end{proof}

\begin{obs}
The iteration \eqref{iterYamQuad} is identical to Xu's iteration \eqref{iter} with $\mu A$ in place of $A$ and $\mu u$ in place of $u$. That is, we take $f(x):=(I- \mu A)x+  \mu u$ in \eqref{iterYam}.

\end{obs}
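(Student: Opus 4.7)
The plan is to proceed by direct substitution into \eqref{iterConv}, since the proposition is essentially a computational identification together with a verification of the admissible range of $\mu$.

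First I would compute $\theta'(x)$ for $\theta(x)=\frac{1}{2}\langle Ax,x\rangle-\langle x,u\rangle$. Because $A$ is self-adjoint, the Gateaux derivative of $x\mapsto \frac{1}{2}\langle Ax,x\rangle$ is $x\mapsto Ax$, and the derivative of $x\mapsto\langle x,u\rangle$ is $x\mapsto u$, so $\theta'(x)=Ax-u$. This is the only nontrivial analytic input and it is entirely standard.

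Next I would substitute $\theta'(T_{n+1}v_n)=AT_{n+1}v_n-u$ into the definition of \eqref{iterConv} and rearrange: expanding $v_{n+1}=T_{n+1}v_n-\alpha_{n+1}\mu(AT_{n+1}v_n-u)$ and collecting the $T_{n+1}v_n$ terms yields $(I-\alpha_{n+1}\mu A)T_{n+1}v_n+\alpha_{n+1}\mu u$, which is exactly \eqref{iterYamQuad}.

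Finally I would justify the range condition $0<\mu<\tfrac{2}{\|A\|}$. Since $\theta''(x)=A$, the two-sided estimate $m\|v\|^2\le\langle\theta''(x)v,v\rangle\le M\|v\|^2$ required by \eqref{iterConv} holds with $m=\gamma$ (by strong positivity of $A$) and with $M=\|A\|$, because for every $v\in H$ we have $\langle Av,v\rangle\le\|A\|\,\|v\|^2$ by Cauchy--Schwarz. The admissible interval $0<\mu<\tfrac{2}{M}$ from \eqref{iterConv} therefore specializes to $0<\mu<\tfrac{2}{\|A\|}$, completing the identification.

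There is no real obstacle here: the argument is purely algebraic, and the only step that requires a moment's thought is choosing the correct value of $M$ to match the hypothesis of \eqref{iterConv}, which is handled by the operator norm estimate $\langle Av,v\rangle\le\|A\|\,\|v\|^2$.
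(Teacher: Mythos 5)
Your computation is correct, but it is really a proof of the preceding Proposition (that \eqref{iterConv} with $\theta(x)=\frac{1}{2}\langle Ax,x\rangle-\langle x,u\rangle$ becomes \eqref{iterYamQuad} with $M=\|A\|$), and it coincides almost verbatim with the paper's proof of that Proposition: $\theta'(x)=Ax-u$, substitute, regroup, and bound $\langle Av,v\rangle\le\|A\|\|v\|^2$. The statement under review, however, is the Remark, whose content is a pair of identifications that your write-up never states: (a) that \eqref{iterYamQuad} is literally Xu's iteration \eqref{iter} after the substitutions $A\mapsto\mu A$ and $u\mapsto\mu u$ (immediate by comparing $x_{n+1}=(I-\alpha_{n+1}A)T_{n+1}x_n+\alpha_{n+1}u$ with $v_{n+1}=(I-\alpha_{n+1}\mu A)T_{n+1}v_n+\alpha_{n+1}\mu u$), and, more to the point, (b) that the contraction realizing \eqref{iterYamQuad} inside Yamada's scheme \eqref{iterYam} is $f(x)=(I-\mu A)x+\mu u$. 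Deriving \eqref{iterYamQuad} from the Deutsch--Yamada iteration \eqref{iterConv} does not by itself exhibit this $f$, and (b) is the actual point of the Remark, since it is what lets one transfer K\"ornlein's metastability rates when $\|A\|<2$.

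The gap is small and easily closed: in the proof that \eqref{iterConv} is a particular case of \eqref{iterYam} the contraction is $f=-\psi'$ with $\psi(x)=\mu\theta(x)-\frac{1}{2}\|x\|^2$, so with your computation of $\theta'$ one gets $f(x)=x-\mu\theta'(x)=x-\mu(Ax-u)=(I-\mu A)x+\mu u$, which is exactly the claim; alternatively, redo the algebra of the earlier Lemma with $\mu A,\mu u$ in place of $A,u$, noting that $f$ is indeed a contraction because $\|I-\mu A\|<1$ for $0<\mu<\frac{2}{\|A\|}$ (by self-adjointness and strong positivity, or by citing Deutsch--Yamada for $\psi'$). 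Either one line would turn your proposal into a complete justification of the Remark.
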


If $\|A\| < 2$, then $A=\mu A_0$ for some strongly positive self-adjoint bounded linear operator $A_0$ and some $0<\mu<\frac{2}{\|A_0\|}$. 
We thus have that, if $\|A\| < 2$, Xu's iteration \eqref{iter} is a particular case of Yamada's iteration \eqref{iterYam}.

We also have that, if $\|A\| \ge 2$, Xu's iteration \eqref{iter} is not a particular case of the iteration \eqref{iterYamQuad}.




\section{Main results}

The main results of this paper are quantitative versions of Theorem \ref{teor3.1} providing uniform effective rates of metastability of Xu's iteration. These results are obtained by applying proof mining methods to Xu's strong convergence proof.

In the following, $H$ is a Hilbert space and $A$ is a self-adjoint bounded linear operator on $H$ satisfying \eqref{StrPosEq} for some $0<\gamma \le 1$ and for all $x\in H$. Furthermore, $l\in\N$ and $T_0,\ldots, T_{l-1}$ are nonexpansive mappings on $H$ such that $F:= \bigcap_{i=0}^{l-1} \Fix(T_i)$ is a nonempty set. Xu's iteration $(x_n)$ is given by \eqref{iter}, 
where $u\in H$, $(\alpha_n) \subset (0,1]$ and $T_n=T_{n \bmod l}$ for $n \ge l$.

For every $i\in\N$, denote
\[\tilde{T}_i:= T_{i+l} \cdots T_{i+1}.\]

We take $p\in F$ and $b_A,b_U,b_p\in\N^*$ are such that
$$b_A\ge\|A\|,\quad b_U\ge\|u\|,\quad b_p \ge \|p\|.$$

\begin{teor}\label{main}
Assume (C1q), (C2q), (C3q) hold. Let $K\in\N^*$ be such that
\begin{equation*}
K \ge \max\left\{\|x_{\nalpha}-p\|,\frac{\|u-Ap\|}{\gamma}\right\}, \quad \text{where}\quad \nalpha=\max\{\sigma_1(b_A-1)-1,0\}.
\end{equation*}
Suppose $\tau:\N\to\N$ is a monotone function satisfying, for all $k,m\in\N$ and $x\in \overline{B}_K(p)$: 
\begin{equation}\label{tau-cond}
\|x-\tilde{T}_mx\| \le \frac{1}{\tau(k)+1} \quad\text{implies}\quad \forall i<l \left(\|x-T_ix\|<\frac{1}{k+1}\right).
\end{equation}

Then, for every $k\in\N$ and $f:\N\to\N$:
\[\exists N \le \Omega(k,f) \forall i,j\in[N,\tilde{f}(N)] \left(\|x_i-x_j\| \le \frac{1}{k+1}\right),\]
where
\begin{equation}\label{def-meta}
\Omega(k,f)=\sigma(k_0,\tilde{\phi}(24K(h_{k,f}^{(\dd)}(0)+1)^2))+\nalpha,
\end{equation}
with

\begin{align*}
\sigma(k,n)&= \sigma_2 \left (\left \lceil \frac{n+\lceil \ln(12K^2(k+1))\rceil}{\gamma} \right\rceil +\nalpha+1\right)-\nalpha,\\
k_0&=4(k+1)^2-1,\\[2mm]
\tilde{\phi}(k)&=\Phi(\tau(k)),\\
\Phi(k)&=\max\{\Sigma((l+1)(k+1)-1),\Psi((l+1)(k+1)-1)\},\\
\Sigma(k)&=\sigma_2 \left (\left \lceil \frac{\psi(2k+1)+\lceil \ln(4K(k+1))\rceil}{\gamma} \right\rceil +\nalpha+l+1\right)-l,\\
\psi(k)&=\max\left\{\sigma_3\left(\left\lceil\frac{(b_A+1)K}{\gamma}\right\rceil(k+1)-1\right)-\nalpha-1,0\right\},\\
\Psi(k)&=\max\{\sigma_1((b_A+1)K(k+1)-1)-1,0\},\\[2mm]
\dd&=2^83^3b_A K^2\left(\left\lceil\frac{1}{2}b_A\kone^2\right\rceil +2b_U\kone\right)\left\lceil\frac{1}{\gamma}\right\rceil^2 (k+1)^4,\\
\kone&=K+b_p,\\[2mm]
h_{k,f}(m)&=\max\{\eta(k_0,\tilde{\phi}(K_m),\tilde{f}),\tilde{f}(\sigma(k_0,\tilde{\phi}(K_m))),K_m\}+1,\\
K_m&=24K(m+1)^2,\\
\eta(k,n,f)&=\max\{24K(k+1)(f(\sigma(k,n))+1),f(\sigma(k,n)),6k+5\},\\
\tilde{f}(n)&= \max_{k\le n}f(k).\\
\end{align*}
\end{teor}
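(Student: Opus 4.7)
The strategy is to convert Xu's strong convergence argument into a quantitative (metastable) one, block by block, following the scheme developed by Ferreira, Leu\c{s}tean and Pinto via the bounded functional interpretation. Three blocks are needed: (I) a rate of $(\tilde{T}_n)$-asymptotic regularity together with a common rate of $T_i$-asymptotic regularity; (II) a finitary/metastable substitute for the weak-compactness step used in the original proof to obtain $\limsup_n\langle u-Ax^*,x_{n+1}-x^*\rangle\le 0$; and (III) an application of the metastable version of Xu's lemma (Proposition \ref{XuMeta}) to the recursion \eqref{xn1-x-sq-ineq} for $s_n:=\|x_n-x^*\|^2$.

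For (I), I put inequality \eqref{xnl1-xn1-ineq} in the form $s_{n+1}\le(1-\gamma\alpha_{n+l+1})s_n+\gamma\alpha_{n+l+1}b_n$, with $s_n=\|x_{n+l}-x_n\|$ and $b_n=\frac{(b_A+1)K}{\gamma}\cdot\frac{|\alpha_{n+l+1}-\alpha_{n+1}|}{\alpha_{n+l+1}}$. Conditions (C1q) and (C3q) yield the $\limsup$-rate $\psi$ for $b_n$, so Proposition \ref{quant-lem-Xu02-cn-0} produces $\Sigma$, a convergence rate for $\|x_{n+l}-x_n\|\to 0$. Combined with the pointwise estimate $\|x_{n+j}-T_{n+j}x_{n+j-1}\|\le (b_A+1)K\alpha_{n+j}$ (from \eqref{iter} and \eqref{uATx-bound}), controlled via (C1q) by $\Psi$, and telescoping using nonexpansiveness of the $T_i$, this gives $\Phi$ as a rate of $(\tilde{T}_n)$-asymptotic regularity on $\overline{B}_K(p)$. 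Hypothesis \eqref{tau-cond} upgrades this, via composition with $\tau$, to the common rate $\tilde{\phi}$ of $T_i$-asymptotic regularity.

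Block (II) is the main difficulty. Xu's original argument selects a weak cluster point $q$ of $(x_n)$, uses demiclosedness of the $I-T_i$ to place $q$ in $F$, and invokes \eqref{VIP} to conclude $\langle u-Ax^*,q-x^*\rangle\le 0$; this is non-constructive. I replace it by a finitary counterpart in the Ferreira--Leu\c{s}tean--Pinto style: given $k$ and $f$, an index $N$ is located in a computable interval such that $\langle u-Ax^*,x_i-x^*\rangle\le \frac{1}{3(k_0+1)}$ on $[N,\tilde{f}(N)]$. The construction rests on a single-step lemma that converts an approximate common fixed point of the $T_i$ (produced through $\tilde{\phi}$ and \eqref{tau-cond}) into an approximate form of \eqref{VIP}; iterating this lemma at most $E$ times, with $E$ absorbing the constants $\tfrac{1}{2}b_A\kone^2+2b_U\kone$, $\lceil 1/\gamma\rceil^2$ and the quartic loss $(k+1)^4$ characteristic of the method, guarantees success at some stage. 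The finite-iteration wrapper $h_{k,f}$ bookkeeps the successive index shifts induced by applying $f$ at each stage, and the replacement of $f$ by $\tilde{f}(n)=\max_{k\le n}f(k)$ supplies the monotonicity needed for the iteration to close.

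For (III), since $x^*\in F$ forces $T_{n+1}x^*=x^*$, inequality \eqref{xn1-x-sq-ineq} collapses to
$$\|x_{n+1}-x^*\|^2\le (1-\gamma\alpha_{n+1})\|x_n-x^*\|^2+2\alpha_{n+1}\langle u-Ax^*,x_{n+1}-x^*\rangle.$$
Lemma \ref{bound-lem} applied with $p:=x^*$ gives $s_n\le 4K^2$; the choice $k_0=4(k+1)^2-1$ converts an $\frac{1}{k_0+1}$-bound on $s_n$ into a $\frac{1}{2(k+1)}$-bound on $\|x_n-x^*\|$, and a triangle inequality delivers the claimed metastability. Feeding the metastable bound produced in (II) into Proposition \ref{XuMeta} with $b_i=\frac{2}{\gamma}\langle u-Ax^*,x_{i+1}-x^*\rangle$ and $c_i=0$, and using the divergence rate $\sigma_2$ rescaled by $\gamma$ and shifted by $\nalpha$ (which accounts for the ceiling $\lceil\cdot/\gamma\rceil$ and the summand $+\nalpha$ in the definition of $\sigma$), produces the outermost layer of $\Omega(k,f)$ exactly as stated.
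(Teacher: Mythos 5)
There is a genuine gap, and it sits exactly where the real difficulty of this theorem lies: your Block (II). You state that, in the Ferreira--Leu\c{s}tean--Pinto style, one can locate $N$ in a computable interval such that $\langle u-Ax^*,x_i-x^*\rangle\le \frac{1}{3(k_0+1)}$ on $[N,\tilde{f}(N)]$, where $x^*$ is the \emph{exact} solution of (P). But the whole point of the method used here is that no such statement about $x^*$ is ever produced: the counterfunction iteration (Proposition \ref{quant-min-gen}, refined in Propositions \ref{quant-min-gamma-P}, \ref{propkey}, \ref{quant-lem2.3} and \ref{cond-innerprod-xn}) only yields, for each $k$ and $f$, some \emph{approximate} minimizer $x\in\overline{B}_K(p)$ which is an approximate common fixed point and satisfies the approximate variational inequality $\langle u-Ax,y-x\rangle<\frac{1}{k+1}$ against test points $y$ that are approximate fixed points at a precision tied to $N$. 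Passing from this $x$ to the exact $x^*$ is precisely the noneffective content (demiclosedness/weak sequential compactness plus exact minimization) that is being eliminated; your ``single-step lemma iterated at most $E$ times'' reproduces the counterfunction argument but its output concerns $x$, not $x^*$, and you give no quantitative link between the two (one would need, e.g., a modulus of uniqueness exploiting the strong convexity of the quadratic functional, which neither you nor the paper provides).

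This gap then invalidates Block (III) as the closing step. Because the good point $x$ depends on $k$ and $f$, the final metastability of $(x_n)$ cannot be obtained by a triangle inequality through a single fixed point $x^*$; the paper instead closes the argument with the general combination principle (Proposition \ref{FLPres}), which plays two runs at different precisions against each other via the counterfunction $\bar{f}(m)=\max\{\bar{\gamma}(2k+1,m,f),\bar{\eta}(2k+1,m,f)\}$ and the instantiation $\bar\psi,\bar\delta,\bar\gamma,\bar\eta,\bar\sigma$ built from Propositions \ref{cond-innerprod-xn} and \ref{step7}. That combination, not a square-root-and-triangle-inequality step around $x^*$, is what generates the shape of $\Omega$ (in particular $k_0=4(k+1)^2-1$ arises from applying Proposition \ref{step7} with $(k+1)^2-1$ and then $2k+1$ inside Proposition \ref{FLPres}, and $h_{k,f}$, $\dd$ arise from the iteration bound $24K((\check{f}+1)^{(R)}(0)+1)^2$ of Proposition \ref{quant-lem2.3} composed with $\tilde\phi$). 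A secondary point: your appeal to Lemma \ref{bound-lem} ``with $p:=x^*$'' is not licensed by the hypotheses, since $K$ is defined relative to the given $p\in F$; one can salvage $\|x_n-x^*\|\le 2K$ via $\|x^*-p\|\le\|u-Ap\|/\gamma\le K$ (using \eqref{VIP} with $y=p$), but that repair does not address the main problem above. Your Block (I) matches the paper's asymptotic-regularity analysis and is fine.
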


The rate of metastability $\Omega$ depends solely on the bounds $K,b_A,b_U,b_p$ and the rates for the quantitative conditions $\sigma_1,\sigma_2,\sigma_3,\tau$.

The following is the corresponding main theorem with (C2q*).

\begin{teor}\label{main-C2q**}
In the hypothesis of Theorem \ref{main}, assume that (C2q*) holds instead of (C2q). Then Theorem \ref{main} holds with $\sigma$ replaced by
\[\sigma^*(k,n)=\max\left\{\sigma_2^*\left(n, \frac{1}{12K^2(k+1)}\right)-\nalpha-l,1\right\}\]
and $\Sigma$ replaced by
\[\Sigma^*(k)=\max\{\sigma_2^*(\psi(2k+1), 4K(k+1)-1)-l,\nalpha+1\}.\]
\end{teor}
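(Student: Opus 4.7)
The plan is to retrace the proof of Theorem \ref{main} step by step, substituting each invocation of a quantitative Xu-lemma in the divergence-rate formulation by its counterpart in the \eqref{q2'}-formulation. Inspecting the statement of Theorem \ref{main}, $\Sigma$ is the rate produced by Proposition \ref{quant-lem-Xu02-cn-0} applied to the linear $l$-step recursion coming from \eqref{xnl1-xn1-ineq2}, while $\sigma$ is the rate produced by Proposition \ref{XuMeta} applied to the squared-norm recursion \eqref{xn1-x-sq-ineq}. Both invocations use the coefficient sequence $\gamma\alpha_n$; under (C2q), this is precisely why $\Sigma$ and $\sigma$ contain the rescaling $\lceil\cdot/\gamma\rceil$ together with the index offsets $\nalpha$ (to ensure $\alpha_{n+1}\le\|A\|^{-1}$, cf.\ Lemma \ref{lem-nalpha}) and $l$ (from aligning the $l$-step recursion). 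Under (C2q*) the product $\prod(1-\gamma\alpha_n)$ is controlled directly by $\sigma_2^*$, so neither the $1/\gamma$-rescaling nor the logarithmic contribution is needed, which is the source of the simpler shape of $\Sigma^*$ and $\sigma^*$.

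First I would re-derive $\Sigma^*$ by invoking Lemma \ref{Xu-lem-C2q**} in place of Proposition \ref{quant-lem-Xu02-cn-0}, using the upper bound $D=2K$ on $\|x_{n+l}-x_n\|$ supplied by Lemma \ref{bound-lem} and the triangle inequality, together with the same $\limsup$-rate $\psi$ as in the original argument. The conclusion of Lemma \ref{Xu-lem-C2q**} reads
\[{\rm A'}\bigl(\psi(2k+1),\,2D(k+1)-1\bigr)+1 = \sigma_2^*\bigl(\psi(2k+1),\,4K(k+1)-1\bigr)+1,\]
which, after undoing the index shift of $l$ that arose when aligning the $l$-step recursion and protecting against a trivially non-positive value by an outer $\max\{\cdot,\nalpha+1\}$, delivers exactly the stated $\Sigma^*$.

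Next I would re-derive $\sigma^*$ by the same mechanism. At the step where the original proof applies Proposition \ref{XuMeta} with $L=4K^2$ (an upper bound on $\|x_n-x\|^2$ for $x$ on $\overline{B}_K(p)$, via Lemma \ref{bound-lem}), I would instead apply Proposition \ref{XuMeta-C2q**} with $D=4K^2$. Its conclusion
\[{\rm A'}\bigl(n,\,3D(k+1)-1\bigr)+1 = \sigma_2^*\bigl(n,\,12K^2(k+1)-1\bigr)+1,\]
written under the convention that the second slot of $\sigma_2^*$ encodes the desired product bound as a reciprocal, becomes $\sigma_2^*\bigl(n,\tfrac{1}{12K^2(k+1)}\bigr)+1$. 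Subtracting the $\nalpha+l$ shift present in the (C2q)-version and again taking a $\max$ with $1$ produces the stated $\sigma^*$.

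The remaining ingredients of the proof of Theorem \ref{main} — the a priori bound on $\|x_n-p\|$ from Lemma \ref{bound-lem}, the $(\tilde{T}_n)$-asymptotic regularity chain obtained from \eqref{xnl1-xn1-ineq2}, the translation to common $T_i$-asymptotic regularity via $\tau$ and \eqref{tau-cond}, and the construction via the auxiliary $h_{k,f}$ of an approximate fixed point used to control the inner product $\langle u-Ax^*,x_{n+1}-x^*\rangle$ — do not interact with $\sum\alpha_n$ other than through these two applications of Xu's lemma, and therefore carry over verbatim. The only real care needed is bookkeeping: tracking $L$ versus $D$, the additive shifts $\nalpha$ and $l$, and the notational bridge between $\sigma_2^*(m,k)$ and $\sigma_2^*(m,\varepsilon)$, so that the arguments fed to $\sigma_2^*$ agree with those actually produced by Lemma \ref{Xu-lem-C2q**} and Proposition \ref{XuMeta-C2q**}. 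I do not anticipate any new combinatorial obstacle beyond this bookkeeping.
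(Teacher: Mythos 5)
Your proposal is correct and follows essentially the same route as the paper: the paper proves Proposition \ref{C2q**-tildeTn-as-reg} by swapping Proposition \ref{quant-lem-Xu02-cn-0} for Lemma \ref{Xu-lem-C2q**} (with $D=2K$, the same $\psi$, and the $\nalpha$, $l$ index shifts) to get $\Sigma^*$, and Proposition \ref{step7-C2q**} by swapping Proposition \ref{XuMeta} for Proposition \ref{XuMeta-C2q**} (with $D=4K^2$) to get $\sigma^*$, the rest of the argument being unchanged exactly as you assert. The only slips are cosmetic bookkeeping (the asymptotic-regularity recursion used is the one-step inequality \eqref{xnl1-xn1-ineq} rather than \eqref{xnl1-xn1-ineq2}, and the $-\nalpha-l$ in $\sigma^*$ comes from the paper reusing the shifted rate $\Theta^*$ of \eqref{def-theta*} rather than from the (C2q)-version of $\sigma$), which do not affect the final formulas.
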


In the sequel we give two main theorems where we replace (C3q) with (C4q).

\begin{teor}\label{main-C4q}
In the hypothesis of Theorem \ref{main}, assume that (C4q) holds instead of (C3q). Then Theorem \ref{main} holds with $\Sigma$ replaced by
\[\hat{\Sigma}(k)=\sigma_2\left(\Ns+l+1+\left\lceil\frac{\ln(4K(k+1))}{\gamma}\right\rceil\right)\]
with $\Ns:=\sigma_4(2(b_A+1)K(k+1)-1)$.
\end{teor}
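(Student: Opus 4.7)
The plan is to reuse the proof of Theorem \ref{main} verbatim, replacing only the one step in which hypothesis (C3q) is invoked to produce the asymptotic regularity rate $\Sigma$ for $\|x_{n+l}-x_n\|$. Every other ingredient---the rate $\Psi$ (which comes from (C1q) alone), the combined function $\Phi$, the rate $\tilde{\phi}$ extracted via $\tau$, and the outer metastability machinery producing $\Omega$---is untouched. So it suffices to exhibit a function $\hat{\Sigma}$ with
\[\forall k\in\N\,\forall n\ge\hat{\Sigma}(k)\,\left(\|x_{n+l}-x_n\|\le\frac{1}{k+1}\right)\]
computed from $(\sigma_2,\sigma_4)$ instead of $(\sigma_2,\sigma_3)$, and then to verify that the $\hat{\Sigma}$ displayed in the statement works.

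First I would invoke the telescoped inequality \eqref{xnl1-xn1-ineq2}, legitimated with $N_0=\nalpha$ and $K_0=K$ by Lemmas \ref{lem-nalpha} and \ref{bound-lem}. Setting $n=\Ns$, $N^*:=n+m$ for $N^*\ge\Ns$, and using $\|x_{\Ns+l}-x_{\Ns}\|\le 2K$ from Lemma \ref{bound-lem}, it yields
\[\|x_{N^*+l}-x_{N^*}\|\le 2K\prod_{i=\Ns+1}^{N^*}(1-\alpha_{i+l}\gamma)+(b_A+1)K\sum_{i=\Ns+1}^{N^*}|\alpha_{i+l}-\alpha_i|.\]
Hypothesis (C4q) with Cauchy modulus $\sigma_4$ makes the second summand at most $\frac{1}{2(k+1)}$ uniformly in $N^*\ge\Ns$, because $\Ns=\sigma_4(2(b_A+1)K(k+1)-1)$ was chosen precisely so that $\sum_{i=\Ns+1}^{\infty}|\alpha_{i+l}-\alpha_i|\le\frac{1}{2(b_A+1)K(k+1)}$.

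For the first summand I would use $1-x\le e^{-x}$ to estimate $\prod_{i=\Ns+1}^{N^*}(1-\alpha_{i+l}\gamma)\le\exp(-\gamma\sum_{j=\Ns+l+1}^{N^*+l}\alpha_j)$ and force this to be $\le\frac{1}{4K(k+1)}$ by arranging $\sum_{j=\Ns+l+1}^{N^*+l}\alpha_j\ge\lceil\ln(4K(k+1))/\gamma\rceil$. Since each $\alpha_j\le 1$, the partial sum up to index $\Ns+l$ is at most $\Ns+l+1$, so the divergence rate $\sigma_2$ guarantees the required inequality once $N^*+l\ge\sigma_2(\Ns+l+1+\lceil\ln(4K(k+1))/\gamma\rceil)$, which is implied by $N^*\ge\hat{\Sigma}(k)$. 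The two summands together give $\|x_{N^*+l}-x_{N^*}\|\le\frac{1}{k+1}$, as required.

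There is no serious obstacle: in Theorem \ref{main}, (C3q) enters through Proposition \ref{quant-lem-Xu02-cn-0} applied to \eqref{xnl1-xn1-ineq}, which demands a $\limsup$-rate for the associated error term. With (C4q) the summability of $|\alpha_i-\alpha_{i+l}|$ lets one bound the telescoped form \eqref{xnl1-xn1-ineq2} directly as above, bypassing Proposition \ref{quant-lem-Xu02-cn-0} entirely. The remaining work---substituting $\hat{\Sigma}$ for $\Sigma$ inside $\Phi$, $\tilde{\phi}$, and $\Omega$---is purely mechanical because $\Sigma$ is used only as an asymptotic regularity rate.
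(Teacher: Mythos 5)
Your proposal is correct and follows essentially the same route as the paper: the paper's proof of this theorem (via Proposition \ref{C4q-tildeTn-as-reg}) also applies the telescoped inequality \eqref{xnl1-xn1-ineq2} with $n=\Ns$, bounds the summability term by $\frac{1}{2(k+1)}$ using (C4q), bounds the product term by $\frac{1}{2(k+1)}$ via $1-x\le e^{-x}$ together with the divergence rate $\sigma_2$ and the crude bound $\sum_{i=0}^{\Ns+l}\alpha_i\le \Ns+l+1$, and then plugs $\hat{\Sigma}$ into $\Phi$, $\tilde{\phi}$ and the metastability construction exactly as you describe. The only cosmetic difference is that the paper explicitly invokes Lemma \ref{div-rate-k-1} to note $\hat{\Sigma}(k)\ge\Ns$ (so that the range $n\ge\hat{\Sigma}(k)$ is covered by the estimate starting at $\Ns$), a point you leave implicit.
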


\begin{teor}\label{main-C4q-C2q**}
In the hypothesis of Theorem \ref{main}, assume that (C2q*) holds instead of (C2q) and (C4q) holds instead of (C3q). Then Theorem \ref{main} holds with $\sigma$ replaced by
\[\sigma^*(k,n)=\max\left\{\sigma_2^*\left(n, \frac{1}{12K^2(k+1)}\right)-\nalpha-l,1\right\}\]
and $\Sigma$ replaced by
\[\hat{\Sigma}^*(k)=\sigma_2^*(\Ns+1,4K(k+1)-1)\]
where $\Ns$ is as in Theorem \ref{main-C4q}.
\end{teor}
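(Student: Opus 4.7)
The plan is to prove Theorem \ref{main-C4q-C2q**} by composing the two independent modifications already introduced in Theorems \ref{main-C2q**} and \ref{main-C4q}. Since (C2q*) strengthens (C2q) and (C4q) strengthens (C3q) --- two disjoint hypotheses of Theorem \ref{main} --- the corresponding substitutions act on different portions of the proof, and I would verify that they may be carried out simultaneously with no interaction.

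For the $\sigma \mapsto \sigma^*$ replacement, the rate $\sigma(k,n)$ in Theorem \ref{main} arises from applying Proposition \ref{XuMeta} to the recurrence \eqref{xn1-x-sq-ineq} at the final metastability step, using the divergence rate $\sigma_2$ of $\sum \alpha_n$. Under (C2q*) I would invoke Proposition \ref{XuMeta-C2q**} in its place, replacing $\theta$ by ${\rm A'}=\sigma_2^*$. The factor $12K^2$ comes from $3D(k+1)$ for an upper bound $D = 4K^2$ on $\|x_n-x\|^2$, and the shift $-\nalpha-l$ is the usual index alignment needed to start the recurrence from $\nalpha+l$. This gives the stated $\sigma^*(k,n)$, exactly as in Theorem \ref{main-C2q**}, and the overall bound $\Omega$ inherits the substitution unchanged.

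For the $\Sigma \mapsto \hat{\Sigma}^*$ replacement, recall that $\Sigma(k)$ is an asymptotic regularity rate for $\|x_{n+l} - x_n\| \to 0$, obtained by applying Proposition \ref{quant-lem-Xu02-cn-0} to \eqref{xnl1-xn1-ineq2}. Two changes must be composed. First, under (C4q), the error term $|\alpha_{i+l}-\alpha_i|$ is not controlled pointwise via $\sigma_3$ but via the tail of a Cauchy series through $\sigma_4$; beyond index $\Ns+1$, the cumulative error is already smaller than the required tolerance, so the $b_n$-term in Xu's lemma effectively vanishes (exactly as in Theorem \ref{main-C4q}). Second, under (C2q*), Proposition \ref{quant-lem-Xu02-cn-0} is replaced by its (C2q*)-variant Lemma \ref{Xu-lem-C2q**}, introducing $\sigma_2^*$ with first argument the base index $\Ns+1$ and second argument $4K(k+1)-1$ (from the bound $D = K$ on $\|x_{n+l}-x_n\|$ via Lemma \ref{bound-lem}). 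Composing yields exactly $\hat{\Sigma}^*(k) = \sigma_2^*(\Ns+1, 4K(k+1)-1)$.

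All remaining ingredients --- the bounding inequalities of Section \ref{iterPrelim}, the (C1q)-driven control encoded in $\Psi$, and the combinatorial scaffolding through $h_{k,f}$, $\eta$, $K_m$ that drives the choice of approximate minimizer in the metastability argument --- are insensitive to which form of (C2q) or (C3q) is assumed, so they transfer verbatim from the proof of Theorem \ref{main}. The principal obstacle is bookkeeping: checking that the multiplicative constants and shift indices coming out of Proposition \ref{XuMeta-C2q**} and Lemma \ref{Xu-lem-C2q**} really do line up with $\sigma^*$ and $\hat{\Sigma}^*$ as stated, but this is the same index chase already carried out separately in Theorems \ref{main-C2q**} and \ref{main-C4q}, and no new analytic step is required.
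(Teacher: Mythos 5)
Your high-level plan is the same as the paper's: the two modifications are independent, the $\sigma\mapsto\sigma^*$ replacement is exactly Proposition \ref{step7-C2q**} (Proposition \ref{XuMeta-C2q**} with $D=4K^2$ in place of Proposition \ref{XuMeta}), and everything else in the proof of Theorem \ref{main} is untouched. That part of your proposal is fine.

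The gap is in your derivation of $\hat{\Sigma}^*$. You propose to handle the asymptotic-regularity step by ``replacing Proposition \ref{quant-lem-Xu02-cn-0} by its (C2q*)-variant Lemma \ref{Xu-lem-C2q**}'' after observing that the (C4q)-error ``effectively vanishes''. This step would fail as stated: Lemma \ref{Xu-lem-C2q**} applies only to recursions of the form $s_{n+1}\le(1-a_n)s_n+a_nr_n$ with a $\limsup$-rate for $(r_n)$, i.e.\ it presupposes that the perturbation can be divided by $a_n$, which is exactly what (C3q) provides and what (C4q) does not; under (C4q) the perturbation is an additive term $c_n=(b_A+1)K|\alpha_{n+l+1}-\alpha_{n+1}|$ that is summable but not ratio-controlled, and Lemma \ref{Xu-lem-C2q**} has no $c_n$-slot (its conclusion would also produce first argument ${\rm R}(2k+1)$, not $\Ns+1$). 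Nor does the error ``vanish'' beyond $\Ns$: it contributes up to $\frac{1}{2(k+1)}$ to $s_m$ and must be budgeted for. The paper's Proposition \ref{C2q**-C4q-tildeTn-as-reg} instead works directly with the unrolled inequality \eqref{xnl1-xn1-ineq2} at $n=\Ns$ and splits the tolerance: $\Ns=\sigma_4(2(b_A+1)K(k+1)-1)$ makes the tail sum at most $\frac{1}{2(k+1)}$ by (C4q), and since the product term carries the prefactor $2K$ (the correct bound on $\|x_{n+l}-x_n\|$ is $2K$, not your $D=K$), one needs $\prod_{i=\Ns+1}^{\Ns+m}(1-\gamma\alpha_{i+l})\le\frac{1}{4K(k+1)}$, which (C2q*) with base point $\Ns+1$ delivers for $m\ge\sigma_2^*(\Ns+1,4K(k+1)-1)-\Ns$; this is where both arguments of $\hat{\Sigma}^*$ actually come from. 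So the correct composition is (C4q)-tail estimate plus a direct (C2q*) product estimate on \eqref{xnl1-xn1-ineq2}, not an application of Lemma \ref{Xu-lem-C2q**}; your bookkeeping happens to land on the right formula only because it was reverse-engineered from the statement.
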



\begin{cor}
Assume that (C1), (C2) and ((C3) or (C4)) hold, and that
\[F=\Fix(T_{l-1} \cdots T_1T_0)=\Fix(T_0T_{l-1} \cdots T_1)=\ldots=\Fix(T_{l-2} \cdots T_0T_{l-1}).\]

Then $(x_n)$ converges strongly to the unique solution $x^*$ of Problem (P).
\end{cor}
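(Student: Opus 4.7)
The plan is to deduce the qualitative convergence from the quantitative metastability results (Theorems~\ref{main} and~\ref{main-C4q}). Under the qualitative hypotheses of the corollary, all the inputs required by those theorems are available classically: the rates $\sigma_1, \sigma_2$, and $\sigma_3$ (or $\sigma_4$) exist by countable choice from (C1), (C2), and (C3) (or (C4)), and for any fixed $p \in F$ one can pick integer bounds $K, b_A, b_U, b_p$ of the required form.

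The only delicate input is the function $\tau$ from~\eqref{tau-cond}. Here the fixed-point hypothesis enters: it implies $\Fix(\tilde{T}_m) = F$ for every $m$, since $\tilde{T}_m$ agrees, according to $m \bmod l$, with one of the $l$ cyclic compositions listed. From this one can extract $\tau$ by a contradiction-and-compactness argument: if no $\tau(k)$ worked for some $k$, extract $(x_n, m_n)$ with $x_n \in \overline{B}_K(p)$, $\|x_n - \tilde{T}_{m_n} x_n\| \to 0$, and $\max_{i<l}\|x_n - T_i x_n\| \ge 1/(k+1)$; by pigeonhole one may assume $m_n \equiv m^* \pmod l$, then invoke weak sequential compactness of $\overline{B}_K(p)$ and the demiclosedness of $I - \tilde{T}_{m^*}$ to pin a weak cluster point in $F$ and derive a contradiction.

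With $\sigma_i$ and $\tau$ in hand, Theorem~\ref{main} (under (C3)) or Theorem~\ref{main-C4q} (under (C4)) yields that $(x_n)$ is metastable, which is classically equivalent to being Cauchy, so in the complete Hilbert space $H$ the iteration converges strongly to some $y$. To identify $y = x^*$ I would combine the common $T_i$-asymptotic regularity produced by the paper's analysis with the continuity of each $T_i$ and $x_n \to y$ to conclude $y \in F$, and then follow Xu's original route: apply~\eqref{xn1-x-sq-ineq} with $x = x^*$ (noting that $T_{n+1}x^* = x^*$), use the VIP characterization~\eqref{VIP}, and conclude via Xu's Lemma (Proposition~\ref{lem-Xu}) that $\|x_n - x^*\| \to 0$.

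The principal obstacle will be constructing $\tau$: the weak-compactness argument only delivers a weak cluster point in $F$, whereas~\eqref{tau-cond} requires a \emph{strong} bound on $\|x - T_i x\|$ at the original $x$. Bridging this will require exploiting the cyclic structure of $\tilde{T}_{m^*} = T_{m^*+l}\cdots T_{m^*+1}$ and the nonexpansiveness of each $T_i$, likely via a telescoping decomposition of the composition, to convert the weak information at the cluster point into a uniform strong modulus on $\overline{B}_K(p)$.
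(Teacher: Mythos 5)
The decisive step in your plan --- extracting a function $\tau$ satisfying \eqref{tau-cond} from the qualitative hypothesis $F=\Fix(T_{l-1}\cdots T_0)=\ldots$ --- does not go through, and you have in fact put your finger on the reason without resolving it. In your contradiction argument, after the pigeonhole and weak sequential compactness you obtain a weak cluster point $z$ of the bad sequence and demiclosedness gives $z\in\Fix(\tilde{T}_{m^*})=F$; but this yields no contradiction with $\max_{i<l}\|x_n-T_ix_n\|\ge\frac{1}{k+1}$, since weak convergence to a point of $F$ gives no upper bound whatsoever on the defects $\|x_n-T_ix_n\|$ of the original points. Moreover, the obstacle is not merely technical: condition (iii) of Theorem~\ref{teor3.1} does \emph{not} imply the existence of any $\tau$ as in \eqref{tau-cond}. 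For example, take $l=2$, $H=\ell_2(\N;\R^2)$, let $T_0$ rotate the $n$-th plane by $\alpha_n=\frac{\pi}{2}$ and $T_1$ rotate it by $-\alpha_n+\varepsilon_n$ with $\varepsilon_n\to 0$, $\varepsilon_n\neq 0$. These are isometries, $F=\Fix(T_1T_0)=\Fix(T_0T_1)=\{0\}$, and yet a vector of norm $K$ supported in the $n$-th plane satisfies $\|x-\tilde{T}_mx\|=2K|\sin(\varepsilon_n/2)|\to 0$ while $\|x-T_0x\|=\sqrt{2}\,K$. So \eqref{tau-cond} is a strictly stronger hypothesis than (iii), and the corollary cannot be obtained by feeding (iii) into Theorem~\ref{main} or Theorem~\ref{main-C4q}; no telescoping of the cyclic composition will repair this.

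The correct (and intended) route uses (iii) only at \emph{exact} fixed points of the compositions, i.e.\ it is Xu's original argument rather than a detour through the metastability theorems. Under (C1), (C2) and ((C3) or (C4)), the asymptotic-regularity part of the analysis (Propositions~\ref{tildeTn-as-reg} and \ref{C4q-tildeTn-as-reg}, read qualitatively --- none of which needs $\tau$) gives $\|x_n-\tilde{T}_nx_n\|\to 0$, and $(x_n)$ is bounded by Lemma~\ref{bound-lem}. Choose a subsequence realizing $\limsup_n\langle u-Ax^*,x_n-x^*\rangle$, lying in a fixed residue class $m$ modulo $l$ and weakly convergent; demiclosedness of $I-\tilde{T}_m$ places its weak limit in $\Fix(\tilde{T}_m)=F$, so \eqref{VIP} gives $\limsup_n\langle u-Ax^*,x_n-x^*\rangle\le 0$; then \eqref{xn1-x-sq-ineq} with $x=x^*$ (so $T_{n+1}x^*=x^*$) together with Xu's Lemma (Proposition~\ref{lem-Xu}) yields $x_n\to x^*$. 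Note also that even in your final paragraph the appeal to the ``common $T_i$-asymptotic regularity produced by the paper's analysis'' is circular for the same reason (that rate is $\Phi\circ\tau$); if one already had the Cauchy property, the limit could be placed in $F$ using only $(\tilde{T}_n)$-asymptotic regularity, continuity of the compositions and (iii). So the blocking issue is solely the nonexistent $\tau$, and any proof of the corollary must avoid it.
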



\begin{obs}
If we replace (C3) by (C4) and take $A=I$, we obtain Bauschke's theorem \cite{Bau} for the case with nonexpansive mappings on $H$. It was shown in \cite{OPX} that the theorem is also valid with (C3).
\end{obs}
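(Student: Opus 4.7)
The plan is to obtain the qualitative strong convergence by combining the quantitative metastability results of Theorems~\ref{main}--\ref{main-C4q-C2q**} with the equivalence between metastability and the Cauchy property. Because $H$ is complete, metastability of $(x_n)$ will yield strong convergence to some $\bar x \in H$, and I will identify $\bar x$ with $x^*$ via the variational characterisation \eqref{VIP}.

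First I would extract, non-effectively, the rates $\sigma_1$, $\sigma_2$, and $\sigma_3$ or $\sigma_4$ from (C1), (C2), and (C3) or (C4): each limit or summability assumption guarantees the existence of some (possibly non-computable) rate. Fixing any $p \in F$ (nonempty by hypothesis) and a sufficiently large $K \in \N^*$ satisfying \eqref{def-K}, the only remaining input required by the main theorems is a monotone function $\tau$ realising the implication \eqref{tau-cond}, which has to be derived from the set-equality condition (iii).

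The hard part will be producing $\tau$. The strategy is a proof by contradiction: if some $k$ admits no such $\tau(k)$, then one extracts sequences $(y_j) \subset \overline{B}_K(p)$ and indices $m_j, i_j < l$ with $\|y_j - \tilde{T}_{m_j} y_j\| \to 0$ and $\|y_j - T_{i_j} y_j\| > 1/(k+1)$. By pigeonhole one may assume $m_j \equiv m$ and $i_j \equiv i$. Weak sequential compactness of $\overline{B}_K(p)$ together with demiclosedness of $I - \tilde{T}_m$ at $0$ (a standard property in Hilbert spaces for nonexpansive maps) places any weak cluster point $\bar y$ of $(y_j)$ in $\Fix(\tilde{T}_m) = F$. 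A short chain argument, exploiting that nonexpansiveness propagates approximate fixed points along cyclic shifts of $\tilde{T}_m$ and that all such shifts share the fixed-point set $F$ by (iii), yields $\|y_j - T_i y_j\| \to 0$, contradicting the choice of $(y_j)$. Hence $\tau$ exists; replacing it by a monotone majorant if necessary, all hypotheses of the main theorems are met.

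Applying the appropriate Theorem~\ref{main}, \ref{main-C2q**}, \ref{main-C4q}, or \ref{main-C4q-C2q**} (depending on whether (C2q*) is used in place of (C2q) and whether (C3q) or (C4q) is in force) yields a rate of metastability for $(x_n)$. Equivalence of metastability with the Cauchy property, together with completeness of $H$, produces $x_n \to \bar x$ strongly. Finally, the $(\tilde{T}_n)$-asymptotic regularity established during the proof of the main theorems places $\bar x$ in $F$, and passing to the limit in the iteration \eqref{iter} (using (C1) to control the residual $\alpha_{n+1}(u - A T_{n+1} x_n)$) verifies the variational inequality \eqref{VIP} at $\bar x$; uniqueness of the solution of \eqref{VIP} then identifies $\bar x = x^*$.
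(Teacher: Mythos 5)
Your proposal does not address the statement at hand. The remark is the observation that, upon specialising to $A=I$ and using (C4) in place of (C3), Theorem \ref{teor3.1} turns into Bauschke's theorem \cite{Bau}: with $A=I$ one may take $\gamma=1$ (and $b_A=1$), the iteration \eqref{iter} reduces to $x_{n+1}=(1-\alpha_{n+1})T_{n+1}x_n+\alpha_{n+1}u$, which is exactly Bauschke's scheme for the family $T_0,\dots,T_{l-1}$, and the objective in \eqref{Prob} becomes $\frac{1}{2}\|x\|^2-\langle x,u\rangle=\frac{1}{2}\|x-u\|^2-\frac{1}{2}\|u\|^2$, so the unique solution of (P) is the metric projection of $u$ onto $F$. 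Hence the conclusion ``$(x_n)$ converges strongly to the solution of (P)'' is precisely Bauschke's conclusion that the iterates converge strongly to $P_Fu$, under (C1), (C2), (C4) and the fixed-point condition (iii); the second sentence is a citation of \cite{OPX}, where the same statement is proved with (C3). None of this specialisation appears in your text: what you sketch is a proof of the preceding Corollary (qualitative strong convergence of Xu's iteration recovered from the quantitative metastability theorems), which is a different statement.

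Even read as an argument for that Corollary, there is a concrete gap at the decisive point, namely the construction of $\tau$. Condition \eqref{tau-cond} is a \emph{uniform} implication over all $x\in\overline{B}_K(p)$ and all $m$, and your contradiction argument does not deliver it: weak sequential compactness plus demiclosedness of $I-\tilde{T}_m$ only places the weak cluster point $\bar y$ of $(y_j)$ in $\Fix(\tilde{T}_m)=F$; since $(y_j)$ need not converge to $\bar y$ in norm, this gives no control whatsoever on $\|y_j-T_iy_j\|$, so no contradiction with $\|y_j-T_iy_j\|>\frac{1}{k+1}$ is reached, and the ``chain argument along cyclic shifts'' does not repair this. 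This is precisely why \eqref{tau-cond} is imposed as an additional hypothesis in Theorem \ref{main} rather than derived from (iii), and why Xu's and Bauschke's original proofs only need that weak cluster points of the specific sequence $(x_n)$ lie in $F$ (in order to invoke \eqref{VIP}), not a uniform approximate-fixed-point implication on a ball.
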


Thus, applying Theorem \ref{main} with $A=I$ and $b_A=\gamma=1$, we obtain:

\begin{cor}\label{Bau-cor}
Let $(x_n)$ be defined by
\begin{equation*}
x_0\in H,\quad x_{n+1}=(1-\alpha_{n+1})T_{n+1}x_n+\alpha_{n+1}u
\end{equation*}

Assume (C1q), (C2q), (C4q) hold. Let $K\in\N^*$ be such that
\begin{equation*}
K \ge \max\left\{\|x_0-p\|,\|u-p\|\right\}
\end{equation*}
Let $\tau$ be as in Theorem \ref{main}.

Then, for every $k\in\N$ and $f:\N\to\N$:
\[\exists N \le \Omega^*(k,f) \forall i,j\in[N,\tilde{f}(N)] \left(\|x_i-x_j\| \le \frac{1}{k+1}\right),\]
where
\begin{equation}\label{def-meta-Bau}
\Omega^*(k,f)=\sigma(k_0,\tilde{\phi}(24K(h_{k,f}^{(\dd)}(0)+1)^2)),
\end{equation}
with
\begin{align*}
\sigma(k,n)&= \sigma_2 \left ( n+\lceil \ln(12K^2(k+1))\rceil  +1\right),\\
k_0&=4(k+1)^2-1,\\[2mm]
\tilde{\phi}(k)&=\Phi(\tau(k)),\\
\Phi(k)&=\max\{\hat{\Sigma}((l+1)(k+1)-1),\Psi((l+1)(k+1)-1)\},\\
\hat{\Sigma}(k)&=\sigma_2\left(\Ns+l+1+\left\lceil\ln(4K(k+1))\right\rceil\right),\\
\Ns&=\sigma_4(4K(k+1)-1),\\
\Psi(k)&=\max\{\sigma_1(2K(k+1)-1)-1,0\},\\[2mm]
\dd&=2^83^3 K^2\left(\left\lceil\frac{1}{2}\kone^2\right\rceil +2b_U\kone\right) (k+1)^4,\\
\kone&=K+b_p,\\[2mm]
h_{k,f}(m)&=\max\{\eta(k_0,\tilde{\phi}(K_m),\tilde{f}),\tilde{f}(\sigma(k_0,\tilde{\phi}(K_m))),K_m\}+1,\\
K_m&=24K(m+1)^2,\\
\eta(k,n,f)&=\max\{24K(k+1)(f(\sigma(k,n))+1),f(\sigma(k,n)),6k+5\},\\
\tilde{f}(n)&= \max_{k\le n}f(k).\\
\end{align*}
\end{cor}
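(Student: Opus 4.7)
The plan is to deduce the corollary as an immediate instance of Theorem \ref{main-C4q} applied to the operator $A := I$. First I would verify that $I$ is strongly positive with constant $\gamma = 1$ (since $\langle Ix, x\rangle = \|x\|^2$), that $\|I\| = 1$ permits taking $b_A := 1$, and that Xu's iteration \eqref{iter} in this case collapses to
\[
x_{n+1} = (1 - \alpha_{n+1})T_{n+1}x_n + \alpha_{n+1}u,
\]
which is precisely the iteration in the statement. The point $p \in F$, the bounds $b_U, b_p$, and the function $\tau$ from the hypotheses of Theorem \ref{main-C4q} carry over verbatim.

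The one subtlety is the role of $\nalpha$. The general formula $\nalpha = \max\{\sigma_1(b_A - 1) - 1, 0\} = \max\{\sigma_1(0) - 1, 0\}$ is not a priori zero, but its sole function throughout Section \ref{iterPrelim} and in the proof of Theorem \ref{main-C4q} is to guarantee that $\alpha_{n+1} \le \|A\|^{-1}$ for every $n \ge \nalpha$. Under the standing hypothesis $(\alpha_n) \subset (0, 1]$ together with $\|A\|^{-1} = \|I\|^{-1} = 1$, this inequality is automatic from $n = 0$. I would therefore take $\nalpha := 0$ throughout, with the effect that the constraint on $K$ reduces to $K \ge \max\{\|x_0 - p\|, \|u - p\|\}$, matching the corollary exactly.

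With $A = I$, $b_A = 1$, $\gamma = 1$ and $\nalpha = 0$ substituted, the remainder is a routine simplification of the formulas in Theorem \ref{main-C4q}: each $\lceil \cdot / \gamma \rceil$ collapses to $\lceil \cdot \rceil$, the factor $\lceil 1/\gamma \rceil^2$ in $\dd$ disappears, the coefficients $b_A$ and $b_A + 1$ become $1$ and $2$ respectively, and the trailing $+\nalpha$ in $\Omega$ is removed. Reading off the resulting expressions reproduces the formulas for $\sigma$, $\hat{\Sigma}$, $\Ns$, $\Psi$, $\dd$, $h_{k,f}$, $K_m$, $\eta$, $\tilde{f}$ and $\Omega^*$ displayed in the statement.

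I expect no genuine mathematical obstacle: the argument is purely a specialisation. The only step that warrants explicit comment, rather than silent substitution, is the justification for replacing $\nalpha$ by $0$ — which rests on the observation that when $A = I$ the bound $\alpha_{n+1} \le \|A\|^{-1}$ is built into the standing hypothesis $(\alpha_n) \subset (0, 1]$ and therefore holds from the very first index. Everything else in the derivation is a mechanical check that the numerical simplifications just described transform each formula of Theorem \ref{main-C4q} into the corresponding formula of the corollary.
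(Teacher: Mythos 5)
Your proposal is correct and follows essentially the same route as the paper, which obtains the corollary simply by applying the main theorem (in its (C4q) variant) with $A=I$ and $b_A=\gamma=1$. Your explicit justification for replacing $\nalpha$ by $0$ — that its only role is to secure $\alpha_{n+1}\le\|A\|^{-1}$, which is automatic here since $(\alpha_n)\subset(0,1]$ — is exactly the point the paper leaves implicit when its formulas drop $\nalpha$ and use $\|x_0-p\|$ in the bound $K$.
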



Our result is incomparable with the rate from \cite{FerLeuPin} (for Bauschke's Theorem in bounded convex sets). For the most part, our result is worse, since we analysed a proof of a more general theorem. However, our result improves in $\sigma$, corresponding to Proposition \ref{step7}, compared to $\sigma'$ from \cite[Proposition 6.9]{FerLeuPin}:
\[\sigma'(k,n):=\sigma_2\left ( \tilde{n}+\lceil \ln(3K^2(k+1)^2)\rceil  +1\right)\]
with $\tilde{n}:=\max\{n,\sigma_1(6K^2(k+1)^2)\}$.


We also have the corresponding result with (C2q*):

\begin{cor}\label{Bau-C2q**}
In the hypothesis of Corollary \ref{Bau-cor}, assume that (C2q*) holds with $\gamma=1$ instead of (C2q). Then Corollary \ref{Bau-cor} holds with $\sigma$ replaced by
\[\sigma^*(k,n)=\max\left\{\sigma_2^*\left(n, \frac{1}{12K^2(k+1)}\right)-l,1\right\}\]
and $\Sigma$ replaced by
\[\hat{\Sigma}^*(k)=\sigma_2^*(\Ns+1,4K(k+1)-1)\]
where $\Ns$ is as in Corollary \ref{Bau-cor}.
\end{cor}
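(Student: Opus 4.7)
The plan is to derive this corollary as a direct specialization of Theorem \ref{main-C4q-C2q**}, mirroring exactly the derivation of Corollary \ref{Bau-cor} from Theorem \ref{main-C4q}. First, I would set $A = I$, so that $\|A\| = 1$ permits the choice $b_A = 1$, and the strong positivity constant may be taken as $\gamma = 1$. Under these choices, Xu's iteration \eqref{iter} collapses to the iteration displayed in the statement. Moreover, since $\|A\|^{-1} = 1$ while $(\alpha_n) \subset (0,1]$, the constraint $\alpha_{n+1} \le \|A\|^{-1}$ that underpins Lemma \ref{bound-ineq} holds automatically for every $n \in \N$. Consequently we may take $\nalpha = 0$, which is exactly what accounts for the simplified bound condition involving $\|x_0 - p\|$ and $\|u - p\|$ rather than $\|x_{\nalpha} - p\|$ and $\|u - Ap\|/\gamma$.

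Next I would substitute these specializations directly into the formulas of Theorem \ref{main-C4q-C2q**}. With $b_A = 1$, $\gamma = 1$, $\nalpha = 0$, the quantity
\[
\sigma^*(k,n) = \max\left\{\sigma_2^*\!\left(n, \tfrac{1}{12K^2(k+1)}\right) - \nalpha - l,\, 1\right\}
\]
reduces to $\max\{\sigma_2^*(n, 1/(12K^2(k+1))) - l, 1\}$, matching the claim. Similarly, $\Ns = \sigma_4(2(b_A+1)K(k+1)-1) = \sigma_4(4K(k+1)-1)$ agrees with the $\Ns$ already inherited from Corollary \ref{Bau-cor}, and $\hat{\Sigma}^*(k) = \sigma_2^*(\Ns+1, 4K(k+1)-1)$ requires no further adjustment. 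All the auxiliary components ($\Psi$, $\dd$, $\kone$, $K_m$, $h_{k,f}$, $\eta$, $\tilde f$) already appear in their specialized form in Corollary \ref{Bau-cor}, so the definition of $\Omega^*$ follows the shape \eqref{def-meta-Bau} with $\sigma$ and $\hat{\Sigma}$ replaced by $\sigma^*$ and $\hat{\Sigma}^*$.

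I expect no essential technical obstacle: the work is a bookkeeping exercise verifying that every occurrence of $b_A$, $\gamma$, or $\nalpha$ in Theorem \ref{main-C4q-C2q**} collapses correctly under the specializations. The only subtlety worth double-checking is that taking $\nalpha = 0$ is legitimate when $\|A\| = 1$; this is immediate since the role of $\nalpha$ in Lemma \ref{bound-lem} and throughout Section \ref{iterPrelim} is precisely to guarantee $\alpha_{n+1} \le \|A\|^{-1}$, a condition that holds from the very first index when $\|A\| = 1$. With that verified, the remaining substitutions are purely symbolic and yield the stated rate of metastability $\Omega^*$.
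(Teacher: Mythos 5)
Your proposal is correct and follows essentially the same route as the paper, which obtains this corollary as a direct specialization of Theorem \ref{main-C4q-C2q**} with $A=I$, $b_A=\gamma=1$ (mirroring how Corollary \ref{Bau-cor} comes from the case with (C2q)), including the key observation that one may take $\nalpha=0$ since $\alpha_{n+1}\le\|A\|^{-1}=1$ holds for all $n$. The bookkeeping you describe ($\sigma^*$ losing the $\nalpha$ shift, $\Ns=\sigma_4(4K(k+1)-1)$, and $\hat{\Sigma}^*$ unchanged) matches the stated formulas.
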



\section{Proofs of our main results}
In the following, we shall present the proofs of our main quantitative results, Theorems \ref{main}, \ref{main-C2q**}, \ref{main-C4q} and \ref{main-C4q-C2q**}. We use all the notations and definitions from the previous sections.

In the sequel we present the main steps of Xu's proof.

\begin{enumerate}
\item[Step 1:] $(x_n)$ is bounded.
\item[Step 2:] $(x_n)$ is $(\tilde{T}_n)$-asymptotically regular.
\item[Step 3:] $\limsup_n \langle u-Ax^*, x_n-x^*\rangle \le 0$ where $x^*$ is the unique solution of the problem (P). The proof uses \eqref{VIP} and the Demiclosedness Principle (see for example \cite[Lemma 2.2]{Xu03}). Hence weak sequential compactness is used in this step.
\item[Step 4:] $(x_n)$ converges strongly to $x^*$. This is proved by applying Step 3, \eqref{xn1-x-sq-ineq} and Xu's lemma (Proposition \ref{lem-Xu}).
\end{enumerate}

The proofs of the main Theorems will be obtained by giving quantitative versions of each of the steps of Xu's proof. 

\subsection{Rates of asymptotic regularity}

A quantitative version of Step 2 in Xu's proof consists in giving rates of $(\tilde{T}_n)$-asymptotic regularity. Additionally, as a consequence of the quantitative hypothesis \eqref{tau-cond} on $\tau$, we obtain a simultaneous rate of $T_i$-asymptotic regularity of $(x_n)$ for all $0 \le i \le l-1$.

Let $U$ be a mapping on $H$. We say $\phi:\N\to\N$ is a \emph{rate of $U$-asymptotic regularity} of a sequence $(x_n) \subseteq H$ if
\[\forall k\in\N \forall n\ge\phi(k) \left(\|x_n-Ux_n\|\le\frac{1}{k+1}\right)\]
Let $(U_n)_{n\in\N}$ be a family of mappings on $H$. We say $\phi:\N\to\N$ is a \emph{rate of $(U_n)$-asymptotic regularity} of a sequence $(x_n) \subseteq H$ if
\[\forall k\in\N \forall n\ge\phi(k) \left(\|x_n-U_nx_n\|\le\frac{1}{k+1}\right)\]


\begin{prop}\label{tildeTn-as-reg}
Let (C1q), (C2q) and (C3q) hold. Then $(x_n)$ is $(\tilde{T}_n)$-asymptotically regular with rate $\Phi$ defined in Theorem \ref{main}.
\end{prop}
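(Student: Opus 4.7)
The plan is to show $\|x_n - \tilde{T}_n x_n\| \to 0$ via the two natural intermediate steps: first establishing a rate for $\|x_{n+l}-x_n\| \to 0$, and then bounding the defect $\|x_{n+l} - \tilde{T}_n x_n\|$ by a sum of $\alpha_i$'s that becomes small by (C1q). A final triangle inequality combines the two into a rate for $\|x_n - \tilde{T}_n x_n\|$.

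For the first step, I would feed the recursive inequality \eqref{xnl1-xn1-ineq} (valid for $n \ge \nalpha$ by Lemma \ref{lem-nalpha}, with the bound $K$ from Lemma \ref{bound-lem}) into the quantitative Xu's lemma, Proposition \ref{quant-lem-Xu02-cn-0}. Writing $s_n := \|x_{n+l+\nalpha}-x_{n+\nalpha}\|$, I put the recursion in the form $s_{n+1} \le (1-a_n)s_n + a_n b_n$ with
\[
a_n = \gamma\alpha_{n+l+\nalpha+1}, \qquad b_n = \frac{(b_A+1)K}{\gamma}\left|1 - \frac{\alpha_{n+\nalpha+1}}{\alpha_{n+l+\nalpha+1}}\right|,
\]
and $L = 2K$ (an easy bound on $s_n$ from Lemma \ref{bound-lem}). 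A rate of divergence for $(a_n)$ comes from $\sigma_2$, scaled by $1/\gamma$ and shifted by $l+\nalpha+1$; a $\limsup$-rate for $(b_n)$ comes from $\sigma_3$ (via (C3q)), scaled by $(b_A+1)K/\gamma$ and shifted by $\nalpha+1$. Plugging into $\Sigma_0(k) = \theta(\psi(2k+1)+\lceil\ln(2L(k+1))\rceil)+1$ and unshifting by $\nalpha$, one obtains precisely the function $\Sigma$ in the statement: $\|x_{m+l} - x_m\| \le \frac{1}{k+1}$ whenever $m \ge \Sigma(k)$.

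For the second step, write $z_0 = x_n$, $z_j = T_{n+j}z_{j-1}$, so $z_l = \tilde{T}_n x_n$. From the iteration, $\|x_{n+j} - T_{n+j}x_{n+j-1}\| = \alpha_{n+j}\|u - AT_{n+j}x_{n+j-1}\| \le \alpha_{n+j}(b_A+1)K$ by \eqref{uATx-bound}, and nonexpansiveness of $T_{n+j}$ gives $\|T_{n+j}x_{n+j-1} - z_j\| \le \|x_{n+j-1} - z_{j-1}\|$. Induction on $j$ yields
\[
\|x_{n+l} - \tilde{T}_n x_n\| \le (b_A+1)K\sum_{j=1}^{l}\alpha_{n+j}.
\]
Using $\Psi$ to force each $\alpha_{n+j} \le \frac{1}{(b_A+1)K(l+1)(k+1)}$, this sum is bounded by $\frac{l}{(l+1)(k+1)}$. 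Combining with $\|x_n - x_{n+l}\| \le \frac{1}{(l+1)(k+1)}$ (requiring $n \ge \Sigma((l+1)(k+1)-1)$) gives $\|x_n - \tilde{T}_n x_n\| \le \frac{1}{k+1}$ for $n \ge \Phi(k) = \max\{\Sigma((l+1)(k+1)-1),\Psi((l+1)(k+1)-1)\}$, which is exactly the stated rate.

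The main obstacle is purely bookkeeping: making the shifts, the $\gamma$-scaling, and the constants in the quantitative Xu's lemma line up with the precise formulas for $\Sigma$, $\psi$, and $\Phi$ in Theorem \ref{main}. The analytic content (the recursion and the telescoping bound via nonexpansiveness) is light; the delicate point is choosing the indexing $s_n = \|x_{n+l+\nalpha}-x_{n+\nalpha}\|$ consistently with the domain of validity of \eqref{xnl1-xn1-ineq} and tracking the $+\nalpha$, $+l+1$ shifts when reading off $\theta$ from $\sigma_2$ and $\psi$ from $\sigma_3$, then rescaling all thresholds by the factor $l+1$ to absorb both contributions into a single $\frac{1}{k+1}$ bound.
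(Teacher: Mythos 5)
Your proposal is correct and follows essentially the same route as the paper: the rate $\Sigma$ for $\|x_{n+l}-x_n\|$ is obtained exactly as in the paper's Claim 2 (inequality \eqref{xnl1-xn1-ineq} fed into Proposition \ref{quant-lem-Xu02-cn-0} with the same shifted sequences $s_n$, $a_n$, $b_n$ and rates read off from $\sigma_2$, $\sigma_3$), and your telescoping bound $\|x_{n+l}-\tilde{T}_nx_n\| \le (b_A+1)K\sum_{j=1}^{l}\alpha_{n+j}$ is just the paper's Claim 1 applied termwise (via \eqref{uATx-bound} and nonexpansiveness) with the use of $\Psi$ postponed to the end, yielding the same rate $\Phi$.
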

\begin{proof}$\ $\\
\noindent \textbf{Claim 1:} $\Psi(k)=\max\{\sigma_1((b_A+1)K(k+1)-1)-1,0\}$ is a rate of convergence of $\|x_{n+1}-T_{n+1}x_n\|$ to $0$.\\ 
\textbf{Proof of claim:}

We note that, since $\sigma_1$ is increasing, we have that $\nalpha=\max\{\sigma_1(b_A-1)-1,0\} \le \Psi(k)$ for every $k\in\N$.

By \eqref{uATx-bound}, we have $\|u-AT_{n+1}x_n\| \le (b_A+1)K$ for $n\ge \nalpha$.

Taking $n\ge \Psi(k)$: 
\begin{multline*} 
\|x_{n+1}-T_{n+1}x_n\|=\|(I-\alpha_{n+1}A)T_{n+1}x_n+\alpha_{n+1}u-T_{n+1}x_n\|\\=\|-\alpha_{n+1}AT_{n+1}x_n+\alpha_{n+1}u\|=\alpha_{n+1}\|u-AT_{n+1}x_n\| \\\le \frac{1}{(b_A+1)K(k+1)}(b_A+1)K=\frac{1}{k+1}
\end{multline*}
that is, $\Psi$ is a rate of convergence of $\|x_{n+1}-T_{n+1}x_n\|$ to $0$. \hfill $\blacksquare$

\vspace{10pt}

\noindent \textbf{Claim 2:} $\Sigma(k)=\sigma_2 \left (\left \lceil \frac{\psi(2k+1)+\lceil \ln(4K(k+1))\rceil}{\gamma} \right\rceil +\nalpha+l+1\right)-l$ is a rate of convergence of $\|x_{n+l}-x_n\|$ to $0$.\\
\textbf{Proof of claim:} 
We have, by Lemma \ref{xnl1-xn1-ineq}, for $n\ge \nalpha$:
\begin{equation*}
\|x_{n+l+1}-x_{n+1}\| \le (1-\alpha_{n+l+1}\gamma)\|x_{n+l}-x_n\|+\alpha_{n+l+1}\gamma \frac{(b_A+1)K|\alpha_{n+l+1}-\alpha_{n+1}|}{\alpha_{n+l+1}\gamma}
\end{equation*}

We will apply Proposition \ref{quant-lem-Xu02-cn-0} with $s_n:=\|x_{n+\nalpha+l}-x_{n+\nalpha}\|,a_n:=\alpha_{n+\nalpha+l+1}\gamma$ and
\[b_n:=\frac{(b_A+1)K|\alpha_{n+\nalpha+l+1}-\alpha_{n+\nalpha+1}|}{\alpha_{n+\nalpha+l+1}\gamma}\]

We have that $\|x_{n+\nalpha+l}-x_{n+\nalpha}\| \le \|x_{n+\nalpha+l}-p\|+\|p-x_{n+\nalpha}\| \le 2K$.

By (C3q), if $n\ge\psi(k)=\max\left\{\sigma_3\left(\left\lceil\frac{(b_A+1)K}{\gamma}\right\rceil(k+1)-1\right)-\nalpha-1,0\right\}$:

\[\frac{(b_A+1)K|\alpha_{n+\nalpha+l+1}-\alpha_{n+\nalpha+1}|}{\alpha_{n+\nalpha+l+1}\gamma} \le \frac{(b_A+1)K}{\gamma}\cdot\frac{1}{\left\lceil\frac{(b_A+1)K}{\gamma}\right\rceil(k+1)-1+1} \le \frac{1}{k+1}\]
that is, $\psi$ is a convergence rate of $b_n$ to $0$, and thus a $\limsup$-rate of $(b_n)$.

Define $\Theta(k)=\max\left\{\sigma_2 \left (\left \lceil \frac{k}{\gamma} \right\rceil +\nalpha+l+1\right)-\nalpha-l-1,0\right\}$. 
Then
\begin{eqnarray*}
\sum\limits_{n=0}^{\Theta(k)} a_n & = & \gamma\sum\limits_{n=0}^{\Theta(k)} \alpha_{n+\nalpha+l+1} = \gamma\sum\limits_{n=\nalpha+l+1}^{\Theta(k)+\nalpha+l+1} \alpha_n =\gamma
\left(\sum\limits_{n=0}^{\Theta(k)+\nalpha+l+1}\alpha_n - \sum\limits_{n=0}^{\nalpha+l}\alpha_n\right)\\
& \geq &\gamma
\left(\sum\limits_{n=0}^{\sigma_2 \left (\left \lceil \frac{k}{\gamma} \right\rceil +\nalpha+l+1\right)}\alpha_n-\sum\limits_{n=0}^{\nalpha+l}\alpha_n\right)\\
& \geq & \gamma\left(\left\lceil \frac{k}{\gamma} \right\rceil +\nalpha+l+1-\sum\limits_{n=0}^{\nalpha+l}\alpha_n\right) \quad \text{by~} (C2q)\\
& \geq & \gamma\left\lceil \frac{k}{\gamma}\right\rceil \quad \text{as~}\sum\limits_{n=0}^{\nalpha+l}\alpha_n\leq \nalpha+l+1\\
& \geq & k.
\end{eqnarray*}
Thus, $\Theta$ is a rate of divergence for $\sum\limits_{n=0}^\infty a_n$.

We now apply Proposition \ref{quant-lem-Xu02-cn-0}, concluding that $\|x_{n+\nalpha+l}-x_{n+\nalpha}\| \to 0$ with rate of convergence
\begin{equation}
k \mapsto \Theta\left(\psi(2k+1)+\lceil \ln(4K(k+1))\rceil\right)+1
\end{equation}
that is, $\|x_{n+l}-x_n\| \to 0$ with rate of convergence
\begin{align*}
\Sigma(k)&=\Theta\left(\psi(2k+1)+\lceil \ln(4K(k+1))\rceil\right)+\nalpha+1\\
&=\max\left\{\sigma_2 \left (\left \lceil \frac{\psi(2k+1)+\lceil \ln(4K(k+1))\rceil}{\gamma} \right\rceil +\nalpha+l+1\right)-\nalpha-l-1,0\right\}\\
&\ \ +\nalpha+1
\end{align*}
By Lemma \ref{div-rate-k-1}, we have that
\begin{align*}
&\sigma_2 \left (\left \lceil \frac{\psi(2k+1)+\lceil \ln(4K(k+1))\rceil}{\gamma} \right\rceil +\nalpha+l+1\right)\\
\ge& \left \lceil \frac{\psi(2k+1)+\lceil \ln(4K(k+1))\rceil}{\gamma} \right\rceil +\nalpha+l\ge \nalpha+l+1
\end{align*}
Thus
\[\Sigma(k)=\sigma_2 \left (\left \lceil \frac{\psi(2k+1)+\lceil \ln(4K(k+1))\rceil}{\gamma} \right\rceil +\nalpha+l+1\right)-l\]
$\ $ \hfill $\blacksquare$
 
Taking $n\ge \Sigma((l+1)(k+1)-1)$ we get:
\[\|x_{n+l}-x_n\| \le \frac{1}{(l+1)(k+1)}\]
Taking $n\ge \Psi((l+1)(k+1)-1)$ we get:
\begin{align*}
\|x_{n+l}-T_{n+l}x_{n+l-1}\| &\le \frac{1}{(l+1)(k+1)}\\
\|T_{n+l}x_{n+l-1}-T_{n+l}T_{n+l-1}x_{n+l-2}\| &\le \frac{1}{(l+1)(k+1)}\\
&\vdots\\
\|T_{n+l}\cdots T_{n+2}x_{n+1}-T_{n+l}\cdots T_{n+1}x_n\| &\le \frac{1}{(l+1)(k+1)}
\end{align*}
Take $n\ge \Phi(k)=\max\{\Sigma((l+1)(k+1)-1),\Psi((l+1)(k+1)-1)\}$. 
Adding up the above and applying the triangle inequality, we get:
\[\|x_n-T_{n+l}\cdots T_{n+1}x_n\| \le \frac{1}{k+1}\]
that is, $\Phi$ is a rate of $(\tilde{T}_n)$-asymptotic regularity of $(x_n)$.
\end{proof}

\begin{prop}\label{C2q**-tildeTn-as-reg}
Assume that (C1q), (C2q*) and (C3q) hold. Then $(x_n)$ is $(\tilde{T}_n)$-asymptotically regular with rate $\Phi^*$, defined as $\Phi$ from Theorem \ref{main} but with $\Sigma$ replaced by
\[\Sigma^*(k)=\max\{\sigma_2^*(\psi(2k+1), 4K(k+1)-1)-l,\nalpha+1\}\]
\end{prop}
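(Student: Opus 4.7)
The plan is to mirror the proof of Proposition~\ref{tildeTn-as-reg} step by step, changing only the quantitative version of Xu's lemma applied in the second claim. Claim~1 of that proof, establishing that $\Psi(k)=\max\{\sigma_1((b_A+1)K(k+1)-1)-1,0\}$ is a rate of convergence of $\|x_{n+1}-T_{n+1}x_n\|$ to zero, depends only on (C1q), Lemma~\ref{bound-lem} and the bound \eqref{uATx-bound}; it transfers verbatim since it does not invoke any hypothesis on $\sum \alpha_n$. Similarly, the final telescoping step, which sums the inequalities $\|T_{n+l}\cdots T_{n+j+1}x_{n+j}-T_{n+l}\cdots T_{n+j}x_{n+j-1}\|\le\|x_{n+j}-T_{n+j}x_{n+j-1}\|$ together with $\|x_{n+l}-x_n\|$ to bound $\|x_n-\tilde{T}_n x_n\|$ by $\tfrac{1}{k+1}$, carries over unchanged and yields the claimed $\Phi^*(k)=\max\{\Sigma^*((l+1)(k+1)-1),\Psi((l+1)(k+1)-1)\}$ once the analogue of Claim~2 is in place.

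The substantive modification is Claim~2. I would keep the same setup: set $s_n:=\|x_{n+\nalpha+l}-x_{n+\nalpha}\|$, $a_n:=\gamma\alpha_{n+\nalpha+l+1}$ and
\[b_n:=\frac{(b_A+1)K|\alpha_{n+\nalpha+l+1}-\alpha_{n+\nalpha+1}|}{\alpha_{n+\nalpha+l+1}\gamma},\]
so that \eqref{xnl1-xn1-ineq} gives $s_{n+1}\le(1-a_n)s_n+a_n b_n$ for every $n\in\N$, with $D:=2K$ an upper bound on $(s_n)$ by Lemma~\ref{bound-lem}. The exact computation used for (C3q) in the proof of Proposition~\ref{tildeTn-as-reg} still gives $\psi$ as a convergence rate, and hence a $\limsup$-rate, of $(b_n)$. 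The only change is to replace the appeal to Proposition~\ref{quant-lem-Xu02-cn-0} by an appeal to Lemma~\ref{Xu-lem-C2q**}, where the required monotone function $\mathrm{A'}$ satisfying \eqref{q2'} for $(a_n)$ is obtained from $\sigma_2^*$ supplied by (C2q*) after the appropriate index shift by $\nalpha+l+1$.

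Applying Lemma~\ref{Xu-lem-C2q**} with $\mathrm{R}=\psi$ and $D=2K$ yields a convergence rate for $(s_n)$ of the form $\mathrm{A'}(\psi(2k+1),\,4K(k+1)-1)+1$. Undoing the $\nalpha$ shift built into the definition of $s_n$ to pass from $(s_n)$ to $\|x_{n+l}-x_n\|$, and taking a max with $\nalpha+1$ to ensure the bound is at least as large as the smallest index for which \eqref{bound-ineq2} applies, gives
\[\Sigma^*(k)=\max\{\sigma_2^*(\psi(2k+1),\,4K(k+1)-1)-l,\,\nalpha+1\}\]
as a rate of convergence of $\|x_{n+l}-x_n\|$ to zero, matching the statement.

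The only delicate point (not really an obstacle) is the bookkeeping of the index shift: (C2q*) supplies $\sigma_2^*(m,\cdot)$ as a rate for $\prod_{i=m}^{n}(1-\gamma\alpha_i)$, whereas in the application of Lemma~\ref{Xu-lem-C2q**} the sequence is $(\gamma\alpha_{n+\nalpha+l+1})$. One verifies that a suitable monotone majorant of $(m,k)\mapsto\sigma_2^*(m+\nalpha+l+1,k)-\nalpha-l-1$ still satisfies \eqref{q2'} for $(a_n)$, and that absorbing the $\nalpha+l+1$ into the outer $\max$ with $\nalpha+1$ produces exactly the stated form of $\Sigma^*$. No new ideas beyond those of Proposition~\ref{tildeTn-as-reg} are needed.
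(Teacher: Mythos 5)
Your route is the same as the paper's: keep Claim~1 and the final telescoping step of Proposition~\ref{tildeTn-as-reg} unchanged, keep the same $s_n$, $a_n$, $b_n$ and the same $\psi$, and replace the appeal to Proposition~\ref{quant-lem-Xu02-cn-0} by Lemma~\ref{Xu-lem-C2q**} with a function ${\rm A'}$ manufactured from $\sigma_2^*$. The one genuine problem is the last bookkeeping step. The function that actually satisfies \eqref{q2'} for $a_n=\gamma\alpha_{n+\nalpha+l+1}$ is, as you say, ${\rm A'}(m,k)=\max\{\sigma_2^*(m+\nalpha+l+1,k)-\nalpha-l-1,0\}$, because $\prod_{i=m}^{n}(1-a_i)=\prod_{j=m+\nalpha+l+1}^{n+\nalpha+l+1}(1-\gamma\alpha_j)$ and a bound on the longer product starting at $j=m$ does not bound this tail. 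Feeding this ${\rm A'}$ into Lemma~\ref{Xu-lem-C2q**} with ${\rm R}=\psi$ and $D=2K$ gives the rate ${\rm A'}(\psi(2k+1),4K(k+1)-1)+1$ for $(s_n)$, hence, after undoing the $\nalpha$-shift,
\[
\max\bigl\{\sigma_2^*\bigl(\psi(2k+1)+\nalpha+l+1,\,4K(k+1)-1\bigr)-l,\ \nalpha+1\bigr\}
\]
as a rate for $\|x_{n+l}-x_n\|$. The extra $\nalpha+l+1$ sits \emph{inside the first argument} of $\sigma_2^*$, and since $\sigma_2^*$ is increasing in that argument it cannot be ``absorbed into the outer max with $\nalpha+1$''; that max only controls the additive constant outside $\sigma_2^*$. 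So, as written, your argument establishes a weaker rate (with $\psi(2k+1)+\nalpha+l+1$ in place of $\psi(2k+1)$), not the stated $\Sigma^*$.

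For comparison, the paper reaches the stated formula by defining $\Theta^*(m,k)=\max\{\sigma_2^*(m,k)-\nalpha-l-1,0\}$ — i.e.\ with the \emph{unshifted} first argument — and asserting that $\Theta^*(m,\cdot)$ is a rate of convergence of $\prod_{n\ge m}(1-a_n)$ to $0$; this is exactly the ``delicate point'' you flagged, and your concern is legitimate: that assertion needs a justification that neither you nor the paper's proof supplies, since monotonicity of $\sigma_2^*$ goes the wrong way. To finish honestly you must either justify replacing $\sigma_2^*(\psi(2k+1)+\nalpha+l+1,\cdot)$ by $\sigma_2^*(\psi(2k+1),\cdot)$, or state the proposition with the shifted first argument your computation actually produces; the claim that the two forms coincide after the outer max is incorrect.
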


\begin{proof}
Claim 1 of Proposition \ref{tildeTn-as-reg} can be applied here.

We will apply Proposition \ref{Xu-lem-C2q**} with $s_n:=\|x_{n+\nalpha+l}-x_{n+\nalpha}\|,a_n:=\alpha_{n+\nalpha+l+1}\gamma$ and
\[b_n:=\frac{(b_A+1)K|\alpha_{n+\nalpha+l+1}-\alpha_{n+\nalpha+1}|}{\alpha_{n+\nalpha+l+1}\gamma}\]

We already have, as in Claim 2 of Proposition \ref{tildeTn-as-reg}, that \\$\psi(k)=\max\left\{\sigma_3\left(\left\lceil\frac{(b_A+1)K}{\gamma}\right\rceil(k+1)-1\right)-\nalpha-1,0\right\}$ is a $\limsup$-rate of $(b_n)$.

We define
\begin{equation}\label{def-theta*}
\Theta^*(m,k)=\max\{\sigma_2^*(m,k)-\nalpha-l-1,0\}.
\end{equation}
Then, given $m\in\N$, we have that $\Theta^*(m,\cdot)$ is a rate of convergence of $\prod\limits_{n=m}^\infty (1-a_n)$ to $0$.

We now apply Proposition \ref{Xu-lem-C2q**}, concluding that $\|x_{n+\nalpha+l}-x_{n+\nalpha}\| \to 0$ with rate of convergence
\[k\mapsto \Theta^*(\psi(2k+1), 4K(k+1)-1)+1\]
that is, $\|x_{n+l}-x_n\| \to 0$ with rate of convergence
\begin{align*}
\Sigma^*(k)&=\Theta^*(\psi(2k+1), 4K(k+1)-1)+\nalpha+1\\
&=\max\{\sigma_2^*(\psi(2k+1), 4K(k+1)-1)-\nalpha-l-1,0\}+\nalpha+1\\
&=\max\{\sigma_2^*(\psi(2k+1), 4K(k+1)-1)-l,\nalpha+1\}
\end{align*}
\end{proof}

\begin{prop}\label{C4q-tildeTn-as-reg}
Assume that (C1q), (C2q) and (C4q) hold. Then $(x_n)$ is $(\tilde{T}_n)$-asymptotically regular with rate $\tilde{\Phi}$, defined as $\Phi$ from Theorem \ref{main} but with $\Sigma$ replaced by
\[\hat{\Sigma}(k)=\sigma_2\left(\Ns+l+1+\left\lceil\frac{\ln(4K(k+1))}{\gamma}\right\rceil\right)\]
where $\Ns:=\sigma_4(2(b_A+1)K(k+1)-1)$.
\end{prop}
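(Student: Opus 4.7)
The plan is to mirror the proof of Proposition \ref{tildeTn-as-reg}, retaining its Claim 1 verbatim (it used only (C1q)) and replacing its Claim 2 by an estimate driven by (C4q) instead of (C3q). Once $\Psi$ is reaffirmed as a rate of convergence of $\|x_{n+1}-T_{n+1}x_n\|$ to $0$, and a new rate $\hat{\Sigma}$ for $\|x_{n+l}-x_n\|$ is in hand, the telescoping argument at the end of the proof of Proposition \ref{tildeTn-as-reg}---which only exploits nonexpansiveness of the $T_i$ and sums $l+1$ contributions each bounded by $\frac{1}{(l+1)(k+1)}$---produces the asserted rate $\tilde{\Phi}(k)=\max\{\hat{\Sigma}((l+1)(k+1)-1),\Psi((l+1)(k+1)-1)\}$.

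For the new Claim 2, the natural tool is inequality \eqref{xnl1-xn1-ineq2}, which decomposes $\|x_{n+l+m}-x_{n+m}\|$ into $\|x_{n+l}-x_n\|$ times a geometric product plus $(b_A+1)K$ times a partial sum of $|\alpha_{i+l}-\alpha_i|$. Setting $n=\Ns=\sigma_4(2(b_A+1)K(k+1)-1)$ (absorbing, via monotonicity of $\sigma_2$, any need to replace $\Ns$ by $\max\{\Ns,\nalpha\}$), the Cauchy-type condition (C4q) gives $\sum_{i=n+1}^{n+m}|\alpha_{i+l}-\alpha_i|\le \frac{1}{2(b_A+1)K(k+1)}$ uniformly in $m$, so the perturbation contributes at most $\frac{1}{2(k+1)}$. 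For the product factor, $\|x_{n+l}-x_n\|\le 2K$ combined with $\prod_{i=n+1}^{n+m}(1-\alpha_{i+l}\gamma)\le \exp(-\gamma\sum_{i=n+1}^{n+m}\alpha_{i+l})$ reduces the task to arranging $\sum_{i=n+l+1}^{n+l+m}\alpha_i\ge \frac{\ln(4K(k+1))}{\gamma}$. Since $\alpha_i\le 1$ yields $\sum_{i=0}^{n+l}\alpha_i\le n+l+1$, (C2q) secures this once $n+l+m\ge \sigma_2(\Ns+l+1+\lceil\ln(4K(k+1))/\gamma\rceil)=\hat{\Sigma}(k)$, so that $\|x_{j+l}-x_j\|\le \frac{1}{k+1}$ for every $j\ge \hat{\Sigma}(k)$.

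The main difference from the (C3q) case is that one cannot conveniently feed the situation into Proposition \ref{quant-lem-Xu02-cn-0}: under (C4q) the perturbation $(b_A+1)K|\alpha_{i+l}-\alpha_i|$ is already summable rather than of the form $a_i b_i$ with $\limsup b_i\le 0$, and any $\limsup$-style reformulation would introduce superfluous factors of $\frac{1}{\alpha_{i+l}\gamma}$. I therefore iterate \eqref{xnl1-xn1-ineq2} directly, cleanly splitting into a summable term uniformly controlled by $\sigma_4$ and an exponentially small product term controlled by $\sigma_2$. This split is the technical heart of the argument; after it, the remaining bookkeeping matching the stated $\hat{\Sigma}$ and $\tilde{\Phi}$ is routine.
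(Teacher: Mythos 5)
Your proposal is correct and follows essentially the same route as the paper: keep Claim 1, apply \eqref{xnl1-xn1-ineq2} with $n=\Ns$, bound the product via $1-x\le e^{-x}$ and (C2q) (using $\alpha_i\le 1$ to control $\sum_{i=0}^{\Ns+l}\alpha_i$) and the perturbation term via (C4q), each by $\frac{1}{2(k+1)}$, then conclude with the telescoping argument of Proposition \ref{tildeTn-as-reg}. The only differences are cosmetic bookkeeping in the index shift, which still yields $\|x_{n+l}-x_n\|\le\frac{1}{k+1}$ for $n\ge\hat{\Sigma}(k)$ as in the paper.
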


\begin{proof}
Claim 1 of Proposition \ref{tildeTn-as-reg} can be applied here.

We show that $\hat{\Sigma}$ is a rate of convergence of $\|x_{n+l}-x_n\|$ to $0$. The following argument is similar to \cite[Lemma 6.6.(ii)]{FerLeuPin}.

We apply \eqref{xnl1-xn1-ineq2} with $n=\Ns$. Then, noting that $1-x \le e^{-x}$ for $x \ge 0$, we get, for all $m\in\N$:
\begin{equation}\label{C4q-as-reg-eq1}
\|x_{\Ns+l+m}-x_{\Ns+m}\|
\le 2K\exp\left(-\gamma \sum_{i=\Ns+1}^{\Ns+m} \alpha_{i+l}\right)+ (b_A+1)K\sum_{i=\Ns+1}^{\Ns+m}|\alpha_{i+l}-\alpha_i|
\end{equation}

By (C4q), we have that
\begin{equation}\label{C4q-as-reg-eq2}
(b_A+1)K\sum_{i=\Ns+1}^{\Ns+m}|\alpha_{i+l}-\alpha_i| \le \frac{1}{2(k+1)}
\end{equation}

Let $M=\hat{\Sigma}(k)-\Ns=\sigma_2\left(\Ns+l+1+\left\lceil\frac{\ln(4K(k+1))}{\gamma}\right\rceil\right)-\Ns$. Note that, by Lemma \ref{div-rate-k-1}, $\sigma_2(k) \ge k-1$ for all $k\in\N$, thus $M\in\N$. By (C2q), it follows that, for all $m\ge M$:
\[\sum_{i=0}^{\Ns+m+l} \alpha_i \ge \sum_{i=0}^{\Ns+M} \alpha_i \ge \Ns+l+1+\left\lceil\frac{\ln(4K(k+1))}{\gamma}\right\rceil \ge \sum_{i=0}^{\Ns+l} \alpha_i + \left\lceil\frac{\ln(4K(k+1))}{\gamma}\right\rceil\]

Thus $\sum_{i=\Ns+1}^{\Ns+m} \alpha_{i+l} = \sum_{i=\Ns+l+1}^{\Ns+m+l} \alpha_i \ge \left\lceil\frac{\ln(4K(k+1))}{\gamma}\right\rceil$, therefore:
\begin{equation}\label{C4q-as-reg-eq3}
2K\exp\left(-\gamma \sum_{i=\Ns+1}^{\Ns+m} \alpha_{i+l}\right) \le 2K\exp(-\ln(4K(k+1)))= \frac{1}{2(k+1)}
\end{equation}
By \eqref{C4q-as-reg-eq1}, \eqref{C4q-as-reg-eq2} and \eqref{C4q-as-reg-eq3} we get that $\|x_{\Ns+l+m}-x_{\Ns+m}\| \le \frac{1}{k+1}$ for $m\ge M$, thus $\|x_{n+l}-x_n\| \le \frac{1}{k+1}$ for $n\ge \Ns+M=\hat{\Sigma}(k)$.

The proof concludes analogously to Proposition \ref{tildeTn-as-reg}.
\end{proof}

\begin{prop}\label{C2q**-C4q-tildeTn-as-reg}
Assume that (C1q), (C2q*) and (C4q) hold. Then $(x_n)$ is $(\tilde{T}_n)$-asymptotically regular with rate $\tilde{\Phi}^*$, defined as $\Phi$ from Theorem \ref{main} but with $\Sigma$ replaced by
\[\hat{\Sigma}^*(k)=\sigma_2^*(\Ns+1,4K(k+1)-1)\]
where $\Ns$ is as in Proposition \ref{C4q-tildeTn-as-reg}.
\end{prop}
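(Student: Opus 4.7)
The plan is to combine the two modifications of Proposition \ref{tildeTn-as-reg} that led to Propositions \ref{C2q**-tildeTn-as-reg} and \ref{C4q-tildeTn-as-reg}. Having (C4q) available means we can, as in Proposition \ref{C4q-tildeTn-as-reg}, bypass the quantitative Xu-type lemma altogether by applying \eqref{xnl1-xn1-ineq2} directly starting from the index $\Ns$; having (C2q*) in place of (C2q) means the exponential divergence estimate used there is replaced by a direct invocation of (C2q*) on the finite product. Thus no new analytic ingredient is needed, only a careful splice of the two prior arguments.

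Concretely, I would first note that Claim~1 in Proposition \ref{tildeTn-as-reg} depends only on (C1q), so $\Psi$ remains a rate of convergence for $\|x_{n+1}-T_{n+1}x_n\|\to 0$ unchanged. For the rate of $\|x_{n+l}-x_n\|\to 0$, I apply \eqref{xnl1-xn1-ineq2} with starting index $n=\Ns=\sigma_4(2(b_A+1)K(k+1)-1)$, obtaining
\[
\|x_{\Ns+l+m}-x_{\Ns+m}\| \le 2K\prod_{i=\Ns+1}^{\Ns+m}(1-\gamma\alpha_{i+l}) + (b_A+1)K\sum_{i=\Ns+1}^{\Ns+m}|\alpha_{i+l}-\alpha_i|,
\]
where Lemma \ref{bound-lem} bounds $\|x_{\Ns+l}-x_\Ns\|$ by $2K$. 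As in Proposition \ref{C4q-tildeTn-as-reg}, the sum is at most $\frac{1}{2(k+1)}$ by the defining property of $\Ns$. For the product I reindex via $j=i+l$ and invoke (C2q*) at $m'=\Ns+l+1$ with parameter $4K(k+1)-1$, which forces
\[
\prod_{j=\Ns+l+1}^{\Ns+l+m}(1-\gamma\alpha_j) \le \frac{1}{4K(k+1)}
\]
as soon as $\Ns+l+m\ge \sigma_2^*(\Ns+l+1,4K(k+1)-1)$; absorbing the shift by $l$ via the monotonicity of $\sigma_2^*$ yields the stated rate $\hat{\Sigma}^*(k)=\sigma_2^*(\Ns+1,4K(k+1)-1)$ for $\|x_{n+l}-x_n\|\to 0$.

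The conclusion then follows exactly as in Proposition \ref{tildeTn-as-reg}: apply the two rates $\hat{\Sigma}^*$ and $\Psi$ with parameter $(l+1)(k+1)-1$, telescope $x_n-\tilde{T}_nx_n$ through the $l$ intermediate terms using the triangle inequality and nonexpansivity of each $T_i$, and sum $l+1$ quantities each bounded by $\frac{1}{(l+1)(k+1)}$. The one delicate point I anticipate is the bookkeeping needed to match the final form of $\hat{\Sigma}^*$: the natural estimate produces $\sigma_2^*(\Ns+l+1,\cdot)-l$, and the passage to $\sigma_2^*(\Ns+1,\cdot)$ has to be justified through the monotonicity properties of $\sigma_2^*$. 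Beyond that, the proof is a routine assembly of Propositions \ref{C2q**-tildeTn-as-reg} and \ref{C4q-tildeTn-as-reg}.
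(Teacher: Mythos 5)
Your proposal follows the paper's proof essentially step for step: Claim~1 of Proposition~\ref{tildeTn-as-reg} is reused unchanged, inequality \eqref{xnl1-xn1-ineq2} is applied with $n=\Ns$, the sum is disposed of by (C4q) exactly as in Proposition~\ref{C4q-tildeTn-as-reg}, the product is handled by (C2q*), and the conclusion is the same telescoping argument with $\Psi$ and the new $\Sigma$-rate at parameter $(l+1)(k+1)-1$. The only point of divergence is the final bookkeeping, and there your stated justification does not work. After your reindexing, what the computation delivers is: the product is at most $\frac{1}{4K(k+1)}$ once $\Ns+l+m\ge\sigma_2^*(\Ns+l+1,4K(k+1)-1)$, i.e.\ a rate $\sigma_2^*(\Ns+l+1,4K(k+1)-1)-l$ for $\|x_{n+l}-x_n\|\to 0$. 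Monotonicity of $\sigma_2^*$ cannot convert this into the stated $\sigma_2^*(\Ns+1,4K(k+1)-1)$: monotonicity gives $\sigma_2^*(\Ns+1,k')\le\sigma_2^*(\Ns+l+1,k')$, which is the wrong direction, and what you would actually need is $\sigma_2^*(m+l,k')\le\sigma_2^*(m,k')+l$. That property is not a consequence of monotonicity and in fact fails for the explicit rate of the paper's own example, where $\sigma_2^*(m,k')=\max\{(m+J-1)(k'+1)-J,0\}$ and hence $\sigma_2^*(m+l,k')-\sigma_2^*(m,k')=l(k'+1)>l$.

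For comparison, the paper obtains the stated form by applying (C2q*) with starting index $\Ns+1$ directly to the product $\prod_{i=\Ns+1}^{\Ns+m}(1-\alpha_{i+l}\gamma)$ as written, i.e.\ without performing your reindexing, taking $M=\sigma_2^*(\Ns+1,4K(k+1)-1)-\Ns$ (the remark $\sigma_2^*(m,k)\ge m$ guarantees $M\in\N$). Your reindexed estimate is the more careful account of what (C2q*), as formulated for $\prod_{i=m}^{n}(1-\gamma\alpha_i)$, literally yields. So either keep your own (correct and self-contained) rate $\sigma_2^*(\Ns+l+1,4K(k+1)-1)-l$ in place of $\hat{\Sigma}^*$, or reproduce the paper's direct application of (C2q*) at index $\Ns+1$; but do not attribute the passage between the two forms to monotonicity of $\sigma_2^*$ — that step, as you wrote it, is the one genuine gap in the proposal.
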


\begin{proof}
Claim 1 of Proposition \ref{tildeTn-as-reg} can be applied here.

We show that $\hat{\Sigma}^*$ is a rate of convergence of $\|x_{n+l}-x_n\|$ to $0$.

Note that, by (C2q*), we have that $\sigma_2^*(m,k)\ge m$ for all $k\in\N^*$ and $m\in \N$.

We apply \eqref{xnl1-xn1-ineq2} with $n=\Ns$, obtaining for all $m\in\N$:
\begin{equation}\label{C4q-C2q**-as-reg-eq1}
\|x_{\Ns+l+m}-x_{\Ns+m}\|
\le 2K\prod_{i=\Ns+1}^{\Ns+m} (1-\alpha_{i+l}\gamma)+ (b_A+1)K\sum_{i=\Ns+1}^{\Ns+m}|\alpha_{i+l}-\alpha_i|
\end{equation}


Let $M:=\hat{\Sigma}^*(k)-\Ns=\sigma_2^*(\Ns+1,4K(k+1)-1)-\Ns \in \N$.

By (C2q*), we have, for $m\ge M$:
\begin{equation}\label{C4q-C2q**-as-reg-eq}
\prod_{i=\Ns+1}^{\Ns+m} (1-\alpha_{i+l}\gamma) \le \frac1{ 4K(k+1)}
\end{equation}

By \eqref{C4q-C2q**-as-reg-eq1},\eqref{C4q-as-reg-eq2} and \eqref{C4q-C2q**-as-reg-eq}, we get $\|x_{\Ns+l+m}-x_{\Ns+m}\| \le \frac{1}{k+1}$ for $m\ge M$, thus $\|x_{n+l}-x_n\| \le \frac{1}{k+1}$ for $n\ge \Ns+M=\hat{\Sigma}^*(k)$.
\end{proof}

\begin{teor}
Assume that $(x_n)$ is $(\tilde{T}_n)$-asymptotically regular with rate $\Phi$ such that $\Phi(k) \ge \nalpha$ for all $k\in\N$ and that $\tau:\N\to\N$ is as in Theorem \ref{main}. Then $(x_n)$ is $T_i$-asymptotically regular for all $0 \le i \le l-1$ with rate $\tilde{\phi}(k)=\Phi(\tau(k))$.
\end{teor}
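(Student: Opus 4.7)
The plan is to directly combine three ingredients: the $(\tilde{T}_n)$-asymptotic regularity rate $\Phi$, the boundedness of the iteration inside $\overline{B}_K(p)$ given by Lemma \ref{bound-lem}, and the hypothesis \eqref{tau-cond} relating closeness to a fixed point of $\tilde{T}_m$ with closeness to fixed points of each $T_i$.

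First, fix $k \in \N$ and take any $n \ge \tilde{\phi}(k) = \Phi(\tau(k))$. Since $\Phi$ is a rate of $(\tilde{T}_n)$-asymptotic regularity of $(x_n)$, we obtain
\[
\|x_n - \tilde{T}_n x_n\| \le \frac{1}{\tau(k)+1}.
\]
Next, because we assume $\Phi(k) \ge \nalpha$ for all $k$, in particular $n \ge \nalpha$, so Lemma \ref{bound-lem} yields $\|x_n - p\| \le K$, i.e., $x_n \in \overline{B}_K(p)$.

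Finally, applying the hypothesis \eqref{tau-cond} with $x := x_n$ and $m := n$ (which is permitted since $x_n \in \overline{B}_K(p)$ and $\tilde{T}_n$ is precisely the mapping appearing in the premise), we conclude
\[
\|x_n - T_i x_n\| < \frac{1}{k+1} \quad \text{for every } i < l.
\]
Therefore $\tilde{\phi}$ is a common rate of $T_i$-asymptotic regularity of $(x_n)$ for all $0 \le i \le l-1$.

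There is no real obstacle here: the statement is essentially a bookkeeping consequence of \eqref{tau-cond} once one has the $(\tilde{T}_n)$-rate $\Phi$ and the boundedness of the iteration. The only point requiring care is to verify that $x_n$ lies in the ball $\overline{B}_K(p)$ on which \eqref{tau-cond} is assumed, which is exactly why the hypothesis $\Phi(k) \ge \nalpha$ is imposed.
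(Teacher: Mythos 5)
Your proposal is correct and follows essentially the same route as the paper: obtain $\|x_n-\tilde{T}_nx_n\|\le\frac{1}{\tau(k)+1}$ from the rate $\Phi$, use $\Phi(k)\ge \nalpha$ together with Lemma \ref{bound-lem} to place $x_n$ in $\overline{B}_K(p)$, and then apply \eqref{tau-cond}. Nothing is missing.
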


\begin{proof}
Taking $n \ge \tilde{\phi}(k)=\Phi(\tau(k))$ we get:
\[\|x_n-\tilde{T}_nx_n\| \le \frac{1}{\tau(k)+1}\]
We have $n\ge \nalpha$, thus, by Lemma \ref{bound-lem}, we have that $x_n \in \overline{B}_K(p)$. Thus, by the hypothesis on $\tau$:
\begin{equation*}
\forall i<l \left(\|x_n-T_ix_n\|<\frac{1}{k+1}\right)
\end{equation*}
\end{proof}

\subsubsection{An example}

We obtain quadratic rates of $(\tilde{T}_n)$-asymptotic regularity for $(x_n)$ for a particular sequence $(\alpha_n)$.

\begin{prop}
Let $\alpha_n:=\frac{1}{\gamma(n+J)}$ with $J:=\left\lceil\frac{1}{\gamma}\right\rceil$.

Then Proposition \ref{C2q**-tildeTn-as-reg} gives us the following rate of $(\tilde{T}_n)$-asymptotic regularity for $(x_n)$:
\begin{multline*}
\Phi^*(k)=\max\{4K(l+1)(k+1)(\psi(2(l+1)(k+1)-1)+J-1)-J-l,\\
J((b_A+1)K(l+1)(k+1)-1)-1,\nalpha+1\}
\end{multline*}
with $\psi(k)=\max\left\{l\left\lceil\frac{(b_A+1)K}{\gamma}\right\rceil(k+1)-J-\nalpha-1,0\right\}$ and $\nalpha=\max\{J(b_A-1)-1,0\}$.

Proposition \ref{C2q**-C4q-tildeTn-as-reg} gives us, likewise:
\[\tilde{\Phi}^*(k)=4K(2lJ(b_A+1)K(l+1)(k+1)-1)(l+1)(k+1)-J.\]
\end{prop}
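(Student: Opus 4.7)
The strategy is to verify that $(\alpha_n)\subset (0,1]$ and to extract explicit monotone rates $\sigma_1,\sigma_2^*,\sigma_3,\sigma_4$ witnessing conditions (C1q), (C2q*), (C3q), (C4q) for the concrete choice $\alpha_n = \frac{1}{\gamma(n+J)}$, and then to substitute these rates into the expressions for $\Phi^*$ and $\tilde{\Phi}^*$ produced by Propositions~\ref{C2q**-tildeTn-as-reg} and~\ref{C2q**-C4q-tildeTn-as-reg}. That $\alpha_n \in (0,1]$ is immediate from $\gamma J \geq 1$, since then $\gamma(n+J)\ge\gamma J\ge 1$.

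For (C1q), solving $\frac{1}{\gamma(n+J)} \le \frac{1}{k+1}$ and again using $\gamma J \ge 1$ gives $\sigma_1(k) := Jk$; plugging this into $\nalpha = \max\{\sigma_1(b_A-1)-1,0\}$ and into $\Psi$ reproduces the claimed values. For (C3q), one computes $\frac{\alpha_n}{\alpha_{n+l}}-1 = \frac{l}{n+J}$ and reads off $\sigma_3(k)=\max\{l(k+1)-J,0\}$, which propagates through $\psi(k)=\max\{\sigma_3(\lceil(b_A+1)K/\gamma\rceil(k+1)-1)-\nalpha-1,0\}$ to the exact form stated in the proposition. For (C2q*), the key observation is that $1-\gamma\alpha_n = \frac{n+J-1}{n+J}$ telescopes, giving $\prod_{i=m}^{N}(1-\gamma\alpha_i)=\frac{m+J-1}{N+J}$, and solving this against $\frac{1}{k+1}$ yields $\sigma_2^*(m,k)=(m+J-1)(k+1)-J$. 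Substituting these rates into $\Sigma^*(k) = \max\{\sigma_2^*(\psi(2k+1),4K(k+1)-1)-l,\nalpha+1\}$ and then into $\Phi^*(k) = \max\{\Sigma^*((l+1)(k+1)-1),\Psi((l+1)(k+1)-1)\}$ will recover the first formula.

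For (C4q), I would use partial fractions to obtain $|\alpha_i - \alpha_{i+l}| = \frac{1}{\gamma}(\frac{1}{i+J}-\frac{1}{i+l+J})$, which telescopes across any finite window to yield the tail bound $\sum_{i\ge n+1}|\alpha_i-\alpha_{i+l}|\le \frac{l}{\gamma(n+1+J)}$. Solving the latter against $\frac{1}{k+1}$ (invoking $\gamma J \ge 1$ once more) provides the Cauchy modulus $\sigma_4(k)=lJ(k+1)-J-1$. Substituting into $\Ns=\sigma_4(2(b_A+1)K(k+1)-1)$, then into $\hat{\Sigma}^*(k)=\sigma_2^*(\Ns+1,4K(k+1)-1)$, and finally into $\tilde{\Phi}^*(k)=\max\{\hat{\Sigma}^*((l+1)(k+1)-1),\Psi((l+1)(k+1)-1)\}$ gives the second claimed formula; the $\Psi$ and $\nalpha+1$ terms are absorbed since the $\hat{\Sigma}^*$ term is quadratic in $(k+1)$ while the others are only linear.

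The only nontrivial ingredient is the telescoping computation for the (C4q) bound. Everything else is direct arithmetic performed against the general machinery already established, so no genuine obstacle arises; the proposition is essentially a matter of plugging explicit rates into the abstract formulas from the previous propositions and simplifying.
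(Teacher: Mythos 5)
Your proposal is correct and follows essentially the same route as the paper: verify $(\alpha_n)\subset(0,1]$, extract the explicit rates $\sigma_1(k)=Jk$, $\sigma_2^*(m,k)=\max\{(m+J-1)(k+1)-J,0\}$, $\sigma_3(k)=\max\{l(k+1)-J,0\}$, $\sigma_4(k)=\max\{lJ(k+1)-J-1,0\}$ via the same telescoping computations, and substitute into Propositions~\ref{C2q**-tildeTn-as-reg} and~\ref{C2q**-C4q-tildeTn-as-reg}. The only loose point is your final ``quadratic absorbs linear'' remark for $\tilde{\Phi}^*$, which must hold for every $k$ (including $k=0$) and is therefore checked in the paper by an explicit inequality chain on the constants; this is routine and does not affect correctness.
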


\begin{proof}
We have, for $n\in\N$, that $\gamma(n+J) \ge \gamma J\ge 1$ and thus $\alpha_n \le 1$.

In the following, take $k\in\N$.

\noindent \textbf{Claim 1:}
(C1q) holds with $\sigma_1(k)=Jk$.\\
\textbf{Proof of claim:}
We have that if $n \ge Jk$ then:
\begin{align*}
&n+\left\lceil\frac{1}{\gamma}\right\rceil\ge \left\lceil\frac{1}{\gamma}\right\rceil (k+1)\\
\Rightarrow\ &\gamma\left(n+\left\lceil\frac{1}{\gamma}\right\rceil\right)\ge k+1\\
\Rightarrow\ &\frac{1}{\gamma(n+J)} \le \frac{1}{k+1}.
\end{align*}
\hfill $\blacksquare$

\vspace{10pt}

\noindent \textbf{Claim 2:}
(C2q*) holds with $\sigma_2^*(m,k)=\max\{(m+J-1)(k+1)-J,0\}$.\\
\textbf{Proof of claim:}
\begin{equation*}
\prod_{i=m}^n \left(1-\gamma\frac{1}{\gamma(i+J)}\right)=\prod_{i=m}^n \frac{i+J-1}{i+J}=\frac{m+J-1}{m+J}\cdots \frac{n+J-1}{n+J}=\frac{m+J-1}{n+J}.
\end{equation*}
Taking $n\ge (m+J-1)(k+1)-J$, we get $\frac{m+J-1}{n+J} \le \frac{1}{k+1}$.

\hfill $\blacksquare$

\vspace{10pt}
\noindent \textbf{Claim 3:}
(C3q) holds with $\sigma_3(k)=\max\{l(k+1)-J,0\}$.\\
\textbf{Proof of claim:}
\begin{equation*}
\frac{\alpha_n}{\alpha_{n+l}} = \frac{\frac{1}{\gamma(n+J)}}{\frac{1}{\gamma(n+l+J)}}=\frac{n+l+J}{n+J}=1+\frac{l}{n+J}.
\end{equation*}
Taking $n\ge l(k+1)-J$, we get
\[\left|\frac{\alpha_n}{\alpha_{n+l}}-1\right| = \frac{l}{n+J} \le \frac{1}{k+1}.\]
\hfill $\blacksquare$

\vspace{10pt}

\noindent \textbf{Claim 4:}
(C4q) holds with $\sigma_4(k)=\max\{lJ(k+1)-J-1,0\}$.\\
\textbf{Proof of claim:}

Take $m\ge \sigma_4(k)+1$. Then:
\[
\sum_{i=m}^n |\alpha_{i+l}-\alpha_i| = \sum_{i=m}^n\left|\frac{1}{\gamma(i+l+J)}-\frac{1}{\gamma(i+J)}\right|=\frac{1}{\gamma}\sum_{i=m}^n \left(\frac{1}{i+J}-\frac{1}{i+l+J}\right).
\]
If $n\ge m+l$,
\begin{multline*}
\frac{1}{\gamma}\sum_{i=m}^n \left(\frac{1}{i+J}-\frac{1}{i+l+J}\right)=\\\frac{1}{\gamma}\left(\frac{1}{m+J}+\ldots+\frac{1}{m+l+J-1}-\frac{1}{n+J+1}-\ldots-\frac{1}{n+J+l}\right).\end{multline*}
Thus, for any $n\in\N$, we get
\[
\sum_{i=m}^n |\alpha_{i+l}-\alpha_i| \le \frac{l}{\gamma(m+J)} \le\frac{l}{\gamma(\sigma_4(k)+J+1)}\le \frac{l}{\gamma lJ(k+1)} \le\frac{1}{k+1}
.\]
\hfill $\blacksquare$

\vspace{10pt}

We thus apply Proposition \ref{C2q**-tildeTn-as-reg} and compute the rate:
\begin{align*}
\Psi(k)&=\max\{\sigma_1((b_A+1)K(k+1)-1)-1,0\}\\
&=\max\{J((b_A+1)K(k+1)-1)-1,0\}\\
&=J((b_A+1)K(k+1)-1)-1,
\end{align*}

\[\nalpha=\max\{\sigma_1(b_A-1)-1,0\}=\max\{J(b_A-1)-1,0\},\]

\begin{align*}
\psi(k)&=\max\left\{\sigma_3\left(\left\lceil\frac{(b_A+1)K}{\gamma}\right\rceil(k+1)-1\right)-\nalpha-1,0\right\}\\
&=\max\left\{\max\left\{l\left(\left\lceil\frac{(b_A+1)K}{\gamma}\right\rceil(k+1)\right)-J,0\right\}-\nalpha-1,0\right\}\\
&=\max\left\{l\left(\left\lceil\frac{(b_A+1)K}{\gamma}\right\rceil(k+1)\right)-J-\nalpha-1,0\right\},
\end{align*}
\begin{align*}
\Sigma^*(k)&=\max\{\sigma_2^*(\psi(2k+1), 4K(k+1)-1)-l,\nalpha+1\}\\
&=\max\{\max\{4K(k+1)(\psi(2k+1)+J-1)-J,0\}-l,\nalpha+1\}\\
&=\max\{4K(k+1)(\psi(2k+1)+J-1)-J-l,\nalpha+1\}.
\end{align*}

Thus:
\begin{align*}
\Phi^*(k)&=\max\{\Sigma^*((l+1)(k+1)-1),\Psi((l+1)(k+1)-1)\}\\
&=\max\{\max\{4K(l+1)(k+1)(\psi(2(l+1)(k+1)-1)+J-1)-J-l,\\
&\ \ \nalpha+1\},J((b_A+1)K(l+1)(k+1)-1)-1\}\\
&=\max\{4K(l+1)(k+1)(\psi(2(l+1)(k+1)-1)+J-1)-J-l,\\
&\ \ J((b_A+1)K(l+1)(k+1)-1)-1,\nalpha+1\}.
\end{align*}

We now apply Proposition \ref{C2q**-C4q-tildeTn-as-reg}:
\begin{align*}
\Ns&:=\sigma_4(2(b_A+1)K(k+1)-1)=\max\{2lJ(b_A+1)K(k+1)-J-1,0\}\\&=2lJ(b_A+1)K(k+1)-J-1,
\end{align*}
\begin{align*}
\hat{\Sigma}^*(k)&=\sigma_2^*(\Ns+1,4K(k+1)-1)\\
&=\max\{4K(2lJ(b_A+1)K(k+1)-1)(k+1)-J,0\}\\
&=4K(2lJ(b_A+1)K(k+1)-1)(k+1)-J.
\end{align*}
Thus:
\begin{align*}
\tilde{\Phi}^*(k)&=\max\{\hat{\Sigma}^*((l+1)(k+1)-1),\Psi((l+1)(k+1)-1)\}\\
&=\max\{4K(2lJ(b_A+1)K(l+1)(k+1)-1)(l+1)(k+1)-J,\\
&\ \ J((b_A+1)K(l+1)(k+1)-1)-1\}\\
&=4K(2lJ(b_A+1)K(l+1)(k+1)-1)(l+1)(k+1)-J,
\end{align*}
since
\begin{align*}
&(8lJ(b_A+1)K^2(l+1)(k+1)-4K)(l+1)(k+1)-J\\
 &\ge (16lJ(b_A+1)K^2-4K)(l+1)(k+1)-J\\
 &\ge (16lJ(b_A+1)-4)K^2(l+1)(k+1)-J\\
  &\ge 14lJ(b_A+1)K^2(l+1)(k+1)-J\\
   &\ge J(b_A+1)K(l+1)(k+1)-J-1.
\end{align*}
\end{proof}






\subsection{General quantitative results on finite families of mappings}

In this subsection, we present general quantitative results that will be used to get quantitative versions of the next steps in Xu's proof. These results are adaptations to our settings of similar results from \cite{FerLeuPin, Koh11}.

\subsubsection{A general principle}

The following is a slight generalization of \cite[Proposition 6.4.]{FerLeuPin} and is proved analogously. We use this principle in order to, following the previous two results, compute a metastability rate for $(x_n)$.

\begin{prop} \label{FLPres}
Let $(X,d)$ be a metric space and let $D$ be a subset of $X$. Let $(u_n)$ be a sequence in $X$ and let $P\in\N$. Let $U_0,\ldots,U_{l-1}$ be mappings from $X$ to $X$ and $\phi_0,\ldots,\phi_{l-1}$ be mappings from $X\times X$ to $\R$.

Assume there are monotone functions $\bar{\delta},\bar{\psi},\bar{\gamma},\bar{\eta},\bar{\sigma}$ such that:
\begin{enumerate}[(i)]
\item 
$\forall k \in \N \forall f\in \N^\N \exists N \le \bar{\psi}(k,\tilde{f}) \exists x\in D \left[\forall i<l \left(d(U_i(x),x)< \frac{1}{\tilde{f}(N)+1}\right)  \right.\\\left. \land\forall n \in [N+P,\tilde{f}(N)+P] \forall i<l \left(\phi_i(x,x)<\phi_i(x,u_n) + \frac{1}{k+1}\right)\right]$
\item 
$\forall k,n \in \N \forall f\in \N^\N \forall x\in D \left[\forall i<l \left(d(U_i(x),x) \le \frac{1}{\bar{\gamma}(k,n,\tilde{f})+1}\right)  \right.\\\left. \land\forall j \in [n+P,\bar{\eta}(k,n,\tilde{f})+P] \forall i<l \left(\phi_i(x,x)\le\phi_i(x,u_j) + \frac{1}{\bar{\delta}(k)+1}\right) \right.\\\left. \to\exists M \le \bar{\sigma}(k,n,\tilde{f}) \forall m \in [M,\tilde{f}(M)] \left(d(u_m,x) \le \frac{1}{k+1}\right)\right]$
\end{enumerate}
Then:
\begin{equation*}
\forall k\in \N \forall f\in \N^\N \exists M \le \phi(k,\tilde{f}) \forall m,n \in [M,\tilde{f}(M)] \left(d(u_m,u_n) \le \frac{1}{k+1}\right)
\end{equation*}
where $\phi(k,f):= \bar{\sigma}(2k+1,\bar{\psi}(\bar{\delta}(2k+1),\bar{f}),f)$ and $\bar{f}(m) := \max\{\bar{\gamma}(2k+1,m,f),\\\bar{\eta}(2k+1,m,f)\}$.
\end{prop}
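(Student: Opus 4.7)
The plan is to use a triangle-inequality argument: produce a single point $x \in D$ such that all iterates $u_m$ with $m$ in a long enough window are within $\frac{1}{2k+2}$ of $x$, and then conclude $d(u_m, u_n) \le \frac{1}{k+1}$. I would fix $k$ and $f$ and write $g := \tilde{f}$, which is monotone by construction, so my task reduces to producing $M \le \phi(k, g)$ with $d(u_m, u_n) \le \frac{1}{k+1}$ for all $m, n \in [M, g(M)]$.

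\noindent\textbf{Matching precisions.} The key move is to apply hypothesis (i) at precision $\bar{\delta}(2k+1)$ with a test function chosen precisely so that the output of (i) meets the two antecedents of (ii) at precision $2k+1$. Concretely, I would set
\[
\bar{f}(m) := \max\{\bar{\gamma}(2k+1, m, g), \bar{\eta}(2k+1, m, g)\},
\]
which is monotone (hence equal to its own majorant) by the monotonicity of $\bar{\gamma}$, $\bar{\eta}$ and $g$. Applying (i) with precision $\bar{\delta}(2k+1)$ and test function $\bar{f}$ gives $N \le \bar{\psi}(\bar{\delta}(2k+1), \bar{f})$ and $x \in D$ with
\[
\forall i<l\, \left(d(U_i(x), x) < \tfrac{1}{\bar{f}(N)+1}\right), \qquad \forall n \in [N+P, \bar{f}(N)+P]\,\forall i<l\,\left(\phi_i(x,x) < \phi_i(x, u_n) + \tfrac{1}{\bar{\delta}(2k+1)+1}\right).
\]
Since $\bar{f}(N) \ge \bar{\gamma}(2k+1, N, g)$ and $\bar{f}(N) \ge \bar{\eta}(2k+1, N, g)$, the strict inequalities relax to the two non-strict antecedents required by (ii) at parameters $(2k+1, N, g, x)$.

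\noindent\textbf{Finishing.} Invoking (ii) then yields $M \le \bar{\sigma}(2k+1, N, g)$ such that $d(u_m, x) \le \frac{1}{2k+2}$ for every $m \in [M, g(M)]$. Monotonicity of $\bar{\sigma}$ in its second argument together with the bound on $N$ gives $M \le \bar{\sigma}(2k+1, \bar{\psi}(\bar{\delta}(2k+1), \bar{f}), g) = \phi(k, g)$, and the triangle inequality finishes: for any $m, n \in [M, g(M)]$,
\[
d(u_m, u_n) \le d(u_m, x) + d(x, u_n) \le \tfrac{2}{2k+2} = \tfrac{1}{k+1}.
\]

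\noindent\textbf{Expected difficulties.} The argument is not conceptually deep—this is the standard ``produce a witness $x$, then cluster $u_m$ around it'' pattern from the bounded functional interpretation analysis of weak compactness. The delicate part is purely bookkeeping: choosing $\bar{f}$ so that the single guarantee $\bar{f}(N)$ bounds both $\bar{\gamma}$ and $\bar{\eta}$ simultaneously, verifying that $\bar{f}$ is already monotone so no tilde is needed in the application of (i), and ensuring the asymmetry between strict inequalities in (i) and non-strict ones in (ii) is absorbed correctly. The doubling of $k$ to $2k+1$ is forced by the triangle inequality at the end, which in turn forces the $\bar{\delta}(2k+1)$ precision inside (i).
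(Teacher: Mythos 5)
Your proof is correct and is essentially the argument the paper relies on (it cites \cite[Proposition 6.4]{FerLeuPin} and notes the result is ``proved analogously''): instantiate (i) at precision $\bar{\delta}(2k+1)$ with the self-majorizing test function $\bar{f}(m)=\max\{\bar{\gamma}(2k+1,m,\tilde{f}),\bar{\eta}(2k+1,m,\tilde{f})\}$, feed the resulting $N$ and $x$ into (ii) at level $2k+1$, and conclude by the triangle inequality. The bookkeeping points you flag (monotonicity of $\bar{f}$, strict versus non-strict inequalities, monotonicity of $\bar{\sigma}$ in its second argument to get $M\le\phi(k,\tilde{f})$) are handled correctly.
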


\subsubsection{Quantitative projection arguments}

Let $H$ be a Hilbert space, $D$ a bounded subset of $H$ and $b\in\N^*$ an upper bound for the diameter of $D$. Let $l\in\N$ and let $T_0,\ldots,T_{l-1}$ be nonexpansive mappings on $H$.

In \cite[Proposition 3.1]{FerLeuPin}, proof mining was applied to a weakening of a projection argument (namely, the existence of a projection of a point on a bounded closed convex set) used by Browder. The resulting quantitative statement can in fact be generalized, under a certain condition for the given bounded set, to any bounded function rather than just the squared distance to a point.

\begin{prop}\label{quant-min-gen}
Assume that for any $k\in\N$ there is $x\in D$ such that
\[\forall i<l\left(\|T_ix-x\| <\frac{1}{k+1}\right)\]
Let $\phi:D\to\R$ be a function such that there are $m_\phi,M_\phi\in\N$ satisfying $-m_\phi \le \phi(x) \le M_\phi$ for $x\in D$.

Then for any $k\in\N$ and $f:\N\to\N$ there is $N\in\N$ such that $N \le (\tilde{f}+1)^{(r)}(0)$ and
\begin{multline}
\exists x\in D \left(\forall i<l\left(\|T_ix-x\| <\frac{1}{\tilde{f}(N)+1}\right) \right.\\ \left. \land \forall z\in D\left(\forall i<l\left( \|T_iz-z\| \le \frac{1}{N+1}\right)\to \phi(x)<\phi(z) + \frac{1}{k+1}\right)\right)
\end{multline}
where $r:=(m_\phi+M_\phi)(k+1)$.
\end{prop}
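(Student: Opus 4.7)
The plan is to run a ``no-counterexample'' iteration driven by the monotonicity of the infimum of $\phi$ over approximate fixed-point sets. For each $M\in\N$, set
\[F(M):=\inf\bigl\{\phi(z) : z\in D,\ \forall i<l\ \|T_iz-z\|\le \tfrac{1}{M+1}\bigr\},\]
and analogously $\tilde{F}(M)$ with the strict condition $\|T_iz-z\|<\frac{1}{M+1}$. By the hypothesis, both defining sets are non-empty for every $M$, so $F$ and $\tilde{F}$ take values in $[-m_\phi,M_\phi]$; the function $F$ is non-decreasing in $M$ and $\tilde{F}(M)\le F(M+1)$, since tightening a constraint can only raise the infimum.

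Set $N_0:=0$ and recursively $N_{j+1}:=\tilde{f}(N_j)+1$, so that $N_j=(\tilde{f}+1)^{(j)}(0)$ is non-decreasing in $j$ (by induction, using monotonicity of $\tilde{f}$). For $j=0,1,\dots,r$ with $r=(m_\phi+M_\phi)(k+1)$, test whether the desired conclusion holds at $N=N_j$. As soon as it does, we are done with $N\le(\tilde{f}+1)^{(r)}(0)$.

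Suppose the conclusion fails for every $j<r$. Its negation at step $j$ says that every $x\in D$ with $\|T_ix-x\|<1/(\tilde{f}(N_j)+1)$ admits a witness $z$ satisfying $\|T_iz-z\|\le 1/(N_j+1)$ and $\phi(z)\le\phi(x)-1/(k+1)$; in particular $\phi(x)\ge F(N_j)+1/(k+1)$, so passing to the infimum over such $x$ gives $\tilde{F}(\tilde{f}(N_j))\ge F(N_j)+1/(k+1)$. Chaining with $\tilde{F}(\tilde{f}(N_j))\le F(\tilde{f}(N_j)+1)=F(N_{j+1})$ yields $F(N_{j+1})\ge F(N_j)+1/(k+1)$. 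Telescoping over $j=0,\dots,r-1$ forces $F(N_r)\ge -m_\phi+r/(k+1)=M_\phi$, so $F(N_r)=M_\phi$.

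It then suffices to verify the conclusion at $j=r$: applying the hypothesis with parameter $\tilde{f}(N_r)$ yields $x\in D$ with $\|T_ix-x\|<1/(\tilde{f}(N_r)+1)$ for all $i<l$; this $x$ satisfies $\phi(x)\le M_\phi$, while every $z\in D$ with $\|T_iz-z\|\le 1/(N_r+1)$ satisfies $\phi(z)\ge F(N_r)=M_\phi\ge\phi(x)$, whence $\phi(x)<\phi(z)+1/(k+1)$. The main subtlety to watch is the bookkeeping of strict $<$ versus non-strict $\le$ in the two fixed-point conditions: the inequality $\tilde{F}(M)\le F(M+1)$ is precisely what allows each iteration step to consume a gap of $1/(k+1)$ and match the bound $r=(m_\phi+M_\phi)(k+1)$ without slack, so that the ``would-be contradiction'' at step $r$ coincides with the last permissible iterate rather than exceeding it.
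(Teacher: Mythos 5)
Your proof is correct, and the bookkeeping (the bound $N_r=(\tilde{f}+1)^{(r)}(0)$, the negation at each $N_j$, and the strict-versus-nonstrict matching via $\tilde{F}(M)\le F(M+1)$) all checks out; the core mechanism is the same counting argument as in the paper, namely that $\phi$, being confined to $[-m_\phi,M_\phi]$, cannot drop by $1/(k+1)$ more than $r=(m_\phi+M_\phi)(k+1)$ times. The organization, however, differs in two respects. The paper first proves an auxiliary version with ``$\le\frac{1}{\tilde{f}(N)+1}$'' for the witness and bound $\tilde{f}^{(r)}(0)$, obtains the strict version by substituting $f+1$ for $f$, and argues by contradiction, explicitly constructing a chain $x_0,\dots,x_{r+1}$ of approximate fixed points descending through the iterates of $\tilde{f}$ with $\phi(x_{j+1})\le\phi(x_j)-\frac{1}{k+1}$. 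You instead introduce the infimum functions $F$ and $\tilde{F}$ over the approximate fixed-point sets, ascend through $N_j=(\tilde{f}+1)^{(j)}(0)$, telescope $F(N_{j+1})\ge F(N_j)+\frac{1}{k+1}$, and conclude without an overall contradiction: either some earlier $N_j$ already works, or $F(N_r)=M_\phi$ forces the conclusion at $N_r$. What your route buys is a direct treatment of the strict inequality (no $f+1$ trick) and a cleaner case analysis; what it costs is the reintroduction of the ideal objects (the infima $F,\tilde{F}$) that the quantitative statement is precisely designed to eliminate --- the paper's pointwise chain of witnesses stays closer to the proof-mining template in which only approximate minimizers are ever used. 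As a verification of the proposition this is of course perfectly legitimate, since the infima of non-empty bounded sets of reals exist and the statement itself is classical.
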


\begin{proof}
We prove the following: for any $k\in\N$ and $f:\N\to\N$ there is $N\in\N$ such that $N \le \tilde{f}^{(r)}(0)$ and
\begin{multline}
\exists x\in D \left(\forall i<l\left(\|T_ix-x\| \le\frac{1}{\tilde{f}(N)+1}\right) \right.\\ \left. \land \forall z\in D\left(\forall i<l\left( \|T_iz-z\| \le \frac{1}{N+1}\right)\to \phi(x)<\phi(z) + \frac{1}{k+1}\right)\right)
\end{multline}
Applying this to $f+1$ gives the proposition.

Assume the above result is not true, that is, there are $k\in\N$ and $f:\N\to\N$ such that for all $N\in\N$ with $N \le \tilde{f}^{(r)}(0)$
\begin{multline}\label{contpos}
\forall x\in D \left(\forall i<l\left(\|T_ix-x\| \le\frac{1}{\tilde{f}(N)+1}\right)  \right.\\ \left. \to\exists z\in D\left(\forall i<l\left( \|T_iz-z\| \le \frac{1}{N+1}\right)\land \phi(z)\le\phi(x) - \frac{1}{k+1}\right)\right)
\end{multline}

Note that, since $\tilde{f}$ is monotone, the sequence $(\tilde{f}^{(j)}(0))_{j\in\N}$ is monotone.

We define $x_0,\ldots,x_{r+1}\in D$ as follows:
\begin{enumerate}[(i)]
\item By hypothesis, take $x_0$ such that
\[\forall i<l\left( \|T_ix_0-x_0\| \le \frac{1}{\tilde{f}^{(r+1)}(0)+1}\right)\]
\item Now we define $x_{j+1}$ for $j\le r$. Assume $x_j$ is such that
\[\forall i<l\left( \|T_ix_j-x_j\| \le \frac{1}{\tilde{f}^{(r-j+1)}(0)+1}\right)\]
By \eqref{contpos} with $x=x_j$ and $N=\tilde{f}^{(r-j)}(0)$, we can take $z\in D$ such that
\[\forall i<l\left( \|T_iz-z\| \le \frac{1}{\tilde{f}^{(r-j)}(0)+1}\right)\land \phi(z)\le\phi(x_j) - \frac{1}{k+1}\]

Let $x_{j+1}$ be one such $z$.
\end{enumerate}

We have, thus, for all $j\le r$,
\[\phi(x_{j+1})\le\phi(x_j) - \frac{1}{k+1}\]
which implies
\[\phi(x_{r+1}) \le \phi(x_0) - \frac{r+1}{k+1} \le M_\phi -\frac{(m_\phi+M_\phi)(k+1)+1}{k+1} = -m_\phi-\frac{1}{k+1}<-m_\phi\]
This is a contradiction.
\end{proof}

We denote, for all $x,y\in H$ and $\lambda \in [0,1]$,
\[w_\lambda(x,y):=(1-\lambda)x+\lambda y.\]

The following result is an immediate extension to a finite famiy of mappings of \cite[Lemma 2.3.]{Koh11}.
\begin{lemma}  \label{conv-quant}
Assume furthermore that $D$ is convex.

For all $k\in\N$ and $y_1,y_2\in D$,
\begin{multline}
\forall j\in\{1,2\} \forall i<l\left(\|T_iy_j-y_j\| \le \frac{1}{12b(k+1)^2}\right)\\
\to \forall \lambda\in[0,1]\forall i<l\left(\|T_iw_\lambda(y_1,y_2)-w_\lambda(y_1,y_2)\| <\frac{1}{k+1}\right)
\end{multline}
\end{lemma}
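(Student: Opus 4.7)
The plan is to reduce to a single mapping and exploit a parallelogram-type identity in Hilbert space. Since the quantifier $\forall i<l$ sits outside everything, it suffices to fix $i<l$, set $T := T_i$, fix $\lambda \in [0,1]$, and show $\|Tw - w\| < \frac{1}{k+1}$, where $w := w_\lambda(y_1,y_2)$ and, by hypothesis, $\epsilon_j := \|Ty_j - y_j\| \le \frac{1}{12b(k+1)^2}$ for $j\in\{1,2\}$. I will also use that $\|y_1 - y_2\| \le b$ and that $b \ge 1$ since $b\in\N^*$.

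The main step is to decompose $Tw - w = (1-\lambda)(Tw - y_1) + \lambda(Tw - y_2)$ and expand the square via the Hilbert space identity
\[
\|(1-\lambda)u + \lambda v\|^2 = (1-\lambda)\|u\|^2 + \lambda\|v\|^2 - \lambda(1-\lambda)\|u-v\|^2.
\]
Taking $u = Tw - y_1$ and $v = Tw - y_2$ (so that $u-v = y_2-y_1$), and noting that $\|w - y_1\| = \lambda\|y_1 - y_2\|$ and $\|w - y_2\| = (1-\lambda)\|y_1 - y_2\|$ give the identity $(1-\lambda)\|w-y_1\|^2 + \lambda\|w-y_2\|^2 = \lambda(1-\lambda)\|y_1-y_2\|^2$, one rewrites
\[
\|Tw - w\|^2 = (1-\lambda)\bigl(\|Tw-y_1\|^2 - \|w-y_1\|^2\bigr) + \lambda\bigl(\|Tw-y_2\|^2 - \|w-y_2\|^2\bigr).
\]
The point of this rewriting is that the a priori large term $\lambda(1-\lambda)\|y_1-y_2\|^2$ (bounded only by $b^2$) is exactly absorbed, leaving only differences of squares of quantities that differ by $\epsilon_j$.

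Nonexpansiveness then gives $\|Tw - y_j\| \le \|T w - T y_j\| + \|T y_j - y_j\| \le \|w - y_j\| + \epsilon_j$, hence $\|Tw - y_j\|^2 - \|w - y_j\|^2 \le 2\|w-y_j\|\,\epsilon_j + \epsilon_j^2 \le 2b\epsilon_j + \epsilon_j^2$. Plugging in the bound on $\epsilon_j$ and using $b \ge 1$, a routine estimate gives
\[
\|Tw - w\|^2 \le \frac{1}{6(k+1)^2} + \frac{1}{144(k+1)^2} = \frac{25}{144(k+1)^2} < \frac{1}{(k+1)^2},
\]
which yields $\|Tw - w\| < \frac{1}{k+1}$, as required. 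The only conceptual hurdle is isolating the correct Hilbert-space identity that produces the cancellation of the $\|y_1-y_2\|^2$ term; once that is in place, everything else is a direct numerical check and the numerology of the constant $12b(k+1)^2$ in the hypothesis becomes transparent.
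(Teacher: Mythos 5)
Your argument is correct, and it is essentially the proof of the result the paper relies on: the lemma is stated here as an immediate extension of Kohlenbach's Lemma 2.3 (cited without proof), whose argument is exactly this combination of the Hilbert-space identity $\|(1-\lambda)u+\lambda v\|^2=(1-\lambda)\|u\|^2+\lambda\|v\|^2-\lambda(1-\lambda)\|u-v\|^2$ with nonexpansiveness, so that the $\|y_1-y_2\|^2$ term cancels and only the $O(b\epsilon)$ terms remain. Your numerical bound $\tfrac{25}{144(k+1)^2}$ (slightly coarser than what one gets by keeping $\|w-y_j\|=\lambda\|y_1-y_2\|$, resp.\ $(1-\lambda)\|y_1-y_2\|$, exactly) still gives $\|T_iw_\lambda-w_\lambda\|\le\tfrac{5}{12(k+1)}<\tfrac{1}{k+1}$, so the proof goes through.
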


The following result is similar to \cite[Corollary 3.5.]{FerLeuPin}.

\begin{prop}\label{quant-min-gamma-gen}
Assume that $D$ and $\phi: D\to\R$ satisfy the hypotheses in Proposition \ref{quant-min-gen} and that $D$ is convex.

For any $k\in \N$ and $f:\N \to\N$ there is $N\in\N$ such that $N \le 12b((\check{f}+1)^{(r)}(0)+1)^2$ with $r:=(m_\phi+M_\phi)(k+1)$ and $\check{f}(m):=\max\{\tilde{f}(12b(m+1)^2),12b(m+1)^2\}$ and there is $x\in D$ for which the following two properties hold:
\[\forall i<l \left(\|T_ix-x\| < \frac{1}{\tilde{f}(N)+1}\right)\]
\begin{multline*}
\forall y\in D \left(\forall i<l \left(\|T_iy-y\|\le\frac{1}{N+1}\right) \to \forall \lambda\in [0,1] \left(\phi(x) < \phi(w_\lambda(x,y)) + \frac{1}{k+1}\right)\right)
\end{multline*}
\end{prop}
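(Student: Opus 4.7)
The plan is to reduce this to Proposition \ref{quant-min-gen} applied with a modified test function $f$, and then use Lemma \ref{conv-quant} to pass from pointwise approximate common fixed points to convex combinations.

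First I would observe that $\check{f}$ is monotone (being the maximum of monotone functions of $m$), so $\widetilde{\check{f}} = \check{f}$. Applying Proposition \ref{quant-min-gen} with $\check{f}$ in place of $f$ (and the same $k$ and $r = (m_\phi + M_\phi)(k+1)$) yields some $N_0 \le (\check{f}+1)^{(r)}(0)$ and $x \in D$ such that
\[
\forall i < l\left(\|T_i x - x\| < \tfrac{1}{\check{f}(N_0)+1}\right)
\]
and for every $z \in D$ with $\|T_i z - z\| \le \tfrac{1}{N_0+1}$ for all $i<l$, we have $\phi(x) < \phi(z) + \tfrac{1}{k+1}$.

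Then I would set $N := 12b(N_0+1)^2$, giving the bound $N \le 12b((\check{f}+1)^{(r)}(0)+1)^2$ immediately. The first conclusion $\|T_i x - x\| < \tfrac{1}{\tilde{f}(N)+1}$ then follows from the definition of $\check{f}$, since $\check{f}(N_0) \ge \tilde{f}(12b(N_0+1)^2) = \tilde{f}(N)$.

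For the second conclusion, let $y \in D$ with $\|T_i y - y\| \le \tfrac{1}{N+1}$ for all $i<l$, and let $\lambda \in [0,1]$; set $z := w_\lambda(x,y)$, which lies in $D$ by convexity. From the first conclusion and from the bound on $y$, both $x$ and $y$ satisfy $\|T_i \cdot - \cdot \| \le \tfrac{1}{12b(N_0+1)^2}$ for every $i < l$ (using $\check{f}(N_0) \ge 12b(N_0+1)^2$ for $x$, and $N+1 \ge 12b(N_0+1)^2$ for $y$). Applying Lemma \ref{conv-quant} with the parameter $N_0$ gives $\|T_i z - z\| < \tfrac{1}{N_0+1}$, so in particular $\le \tfrac{1}{N_0+1}$, and then the second property from Proposition \ref{quant-min-gen} yields $\phi(x) < \phi(z) + \tfrac{1}{k+1} = \phi(w_\lambda(x,y)) + \tfrac{1}{k+1}$, as desired.

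The only real obstacle is bookkeeping: one has to choose $\check{f}$ and the relationship $N = 12b(N_0+1)^2$ precisely so that the three inequalities needed — the strict inequality on $T_i x$, the hypothesis on $T_i y$, and the target $1/(N_0+1)$ threshold of Lemma \ref{conv-quant} — all fit together. Once $\check{f}$ is defined as in the statement, the verification is routine.
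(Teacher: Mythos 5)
Your proposal is correct and follows essentially the same route as the paper's proof: apply Proposition \ref{quant-min-gen} to the monotone function $\check{f}$ to get $N_0$ and $x$, set $N=12b(N_0+1)^2$, and use Lemma \ref{conv-quant} with parameter $N_0$ on the pair $x,y$ so that $w_\lambda(x,y)$ (which lies in $D$ by convexity) satisfies the $\frac{1}{N_0+1}$ fixed-point condition needed to invoke the minimality clause. The bookkeeping you describe matches the paper's exactly.
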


\begin{proof}
By Proposition \ref{quant-min-gen}, there are $x\in D$ and $N'\in\N$ such that $N' \le (\check{f}+1)^{(r)}(0)$ and
\begin{multline}\label{eq-min}
\left(\forall i<l\left(\|T_ix-x\| <\frac{1}{\check{f}(N')+1}\right) \right.\\ \left. \land \forall z\in D\left(\forall i<l\left( \|T_iz-z\| \le \frac{1}{N'+1}\right)\to \phi(x)<\phi(z) + \frac{1}{k+1}\right)\right)
\end{multline}

Let $N:=12b(N'+1)^2$. Then $N\le 12b((\check{f}+1)^{(r)}(0)+1)^2$. Furthermore, $\tilde{f}(N) \le \check{f}(N')$, thus
\[\forall i<l\left(\|T_ix-x\| <\frac{1}{\tilde{f}(N)+1}\right)\]

Now let $y\in D$ such that $\forall i<l \left(\|T_iy-y\|\le\frac{1}{N+1}\right)$. Then we have $\forall i<l \left(\|T_iy-y\|<\frac{1}{12b(N'+1)^2}\right)$. Furthermore, we have:
\[\forall i<l\left(\|T_ix-x\| <\frac{1}{12b(N'+1)^2}\right)\]

By Lemma \ref{conv-quant}, we get
\[\forall i<l\left(\|T_iw_\lambda(x,y)-w_\lambda(x,y)\| <\frac{1}{N'+1}\right)\]

By \eqref{eq-min}, we get the result.
\end{proof}

\subsection{A quantitative version of the variational formulation of the problem}

In Step 3 of Xu's proof, the variational characterization \eqref{VIP} of the quadratic optimization problem (P) is used. In the following we give a quantitative version of \eqref{VIP}.

This quantitative result corresponds to the existence of a solution of (P).
\begin{prop}\label{quant-min-P}
Take $D=\overline{B}_K(p)$. For any $k\in\N$ and $f:\N\to\N$ there is $N\in\N$ such that $N \le (\tilde{f}+1)^{(r)}(0)$ and
\begin{multline*}
\exists x\in D \left(\forall i<l\left(\|T_ix-x\| <\frac{1}{\tilde{f}(N)+1}\right) \land \forall z\in D\right.\\\left. \left(\forall i<l\left( \|T_iz-z\| \le \frac{1}{N+1}\right)\to \frac{1}{2}\langle Ax,x\rangle-\langle x,u\rangle<\frac{1}{2}\langle Az,z\rangle-\langle z,u\rangle + \frac{1}{k+1}\right)\right)
\end{multline*}
where $r:=\left(\left\lceil\frac{1}{2}b_A\kone^2\right\rceil +2b_U\kone\right)(k+1)$ where $\kone:=K+b_p$ with $b_p\in\N^*$ such that $b_p \ge \|p\|$ and $b_U\in\N^*$ is such that $b_U\ge\|u\|$.
\end{prop}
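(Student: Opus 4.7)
The proposal is to derive Proposition \ref{quant-min-P} as a direct specialization of the general principle Proposition \ref{quant-min-gen}, taking $D = \overline{B}_K(p)$ and $\phi(x) := \frac{1}{2}\langle Ax,x\rangle - \langle x,u\rangle$. Once the hypotheses of that proposition are checked and the parameters $m_\phi, M_\phi$ are extracted, the statement follows by reading off the value $r = (m_\phi+M_\phi)(k+1)$.

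The first step is to verify the standing hypothesis of Proposition \ref{quant-min-gen}, namely that for every $k\in\N$ there is $x\in D$ with $\|T_i x - x\| < \frac{1}{k+1}$ for all $i<l$. This is immediate: $p\in F\cap D$, so $T_i p = p$ and $\|T_ip - p\| = 0$ for every $i<l$. Thus no nontrivial work is needed here; the choice $x=p$ suffices uniformly in $k$.

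The second step is to produce explicit bounds $-m_\phi \le \phi(x) \le M_\phi$ on $D$. For any $x\in D = \overline{B}_K(p)$ one has $\|x\| \le \|x-p\| + \|p\| \le K + b_p = \kone$. Since $A$ is strongly positive, $\langle Ax,x\rangle \ge 0$, so the first summand of $\phi$ is non-negative and bounded above by $\frac{1}{2}\|A\|\|x\|^2 \le \frac{1}{2}b_A\kone^2$, while Cauchy--Schwarz gives $|\langle x,u\rangle| \le \|x\|\|u\| \le b_U\kone$. Hence one may take $M_\phi := \lceil\frac{1}{2}b_A\kone^2\rceil + b_U\kone$ and $m_\phi := b_U\kone$, giving
\[ m_\phi + M_\phi \le \left\lceil\tfrac{1}{2}b_A\kone^2\right\rceil + 2b_U\kone. \]

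The third step is simply to invoke Proposition \ref{quant-min-gen} with these data. The resulting bound on $N$ is $N \le (\tilde f + 1)^{(r)}(0)$ with $r = (m_\phi+M_\phi)(k+1) = (\lceil\frac{1}{2}b_A\kone^2\rceil + 2b_U\kone)(k+1)$, and the conjunction of conclusions it produces matches the statement verbatim once we substitute $\phi(x) = \frac{1}{2}\langle Ax,x\rangle - \langle x,u\rangle$. No genuine obstacle is expected, since the strong positivity of $A$ guarantees non-negativity of the quadratic term (keeping $m_\phi$ independent of $b_A$) and the existence of a common fixed point makes the nontrivial hypothesis of Proposition \ref{quant-min-gen} trivially true; the only point requiring minor care is to ceiling $\frac{1}{2}b_A\kone^2$ to land inside $\N$, as reflected in the final formula for $r$.
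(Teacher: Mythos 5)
Your proposal is correct and matches the paper's own proof essentially verbatim: both apply Proposition \ref{quant-min-gen} with $\phi(x)=\frac{1}{2}\langle Ax,x\rangle-\langle x,u\rangle$ on $D=\overline{B}_K(p)$, verify the fixed-point hypothesis via $p\in F$, and take $M_\phi=\lceil\frac{1}{2}b_A\kone^2\rceil+b_U\kone$, $m_\phi=b_U\kone$ to obtain the stated $r$. No gaps.
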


\begin{proof}
We apply Proposition \ref{quant-min-gen} with $\phi(x)=\frac{1}{2}\langle Ax,x\rangle-\langle x,u\rangle$.

The required condition for $D$ is clearly satisfied.

Take $x\in D$. Note that $\|x\| \le \kone$. Thus we have:
\[\phi(x) \le \frac{1}{2}\|A\|\|x\|^2+\|x\|\|u\| \le \left\lceil\frac{1}{2}b_A\kone^2\right\rceil +b_U\kone\]
and
\[\phi(x) \ge \frac{1}{2}\gamma\|x\|^2-\|x\|\|u\| \ge -b_U\kone\]
\end{proof}

\begin{prop}\label{quant-min-gamma-P}
Take $D=\overline{B}_K(p)$.

For any $k\in \N$ and $f:\N \to\N$ there is $N\in\N$ such that $N \le 24K((\check{f}+1)^{(r)}(0)+1)^2$ with $r:=\left(\left\lceil\frac{1}{2}b_A\kone^2\right\rceil +2b_U\kone\right)(k+1)$ and $\check{f}(m):=\max\{\tilde{f}(24K(m+1)^2),24K(m+1)^2\}$ and there is $x\in D$ for which the following two properties hold:
\[\forall i<l \left(\|T_ix-x\| < \frac{1}{\tilde{f}(N)+1}\right)\]
\begin{multline*}
\forall y\in D \left(\forall i<l \left(\|T_iy-y\|\le\frac{1}{N+1}\right) \right.\to\\ \left.\forall \lambda\in [0,1] \left(\frac{1}{2}\langle Ax,x\rangle-\langle x,u\rangle < \frac{1}{2}\langle Aw_\lambda(x,y),w_\lambda(x,y)\rangle-\langle w_\lambda(x,y),u\rangle + \frac{1}{k+1}\right)\right)
\end{multline*}
\end{prop}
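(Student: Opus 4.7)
The plan is to apply Proposition \ref{quant-min-gamma-gen} directly, specialized to $D = \overline{B}_K(p)$ and $\phi(x) = \frac{1}{2}\langle Ax,x\rangle - \langle x,u\rangle$, in the same manner that Proposition \ref{quant-min-P} was obtained as a specialization of Proposition \ref{quant-min-gen}. So the task reduces to checking that all hypotheses of \ref{quant-min-gamma-gen} are met and that the parameter bookkeeping yields exactly the constants $r$, $\check{f}$, and the $24K(\cdots)^2$ quantifier bound stated in the proposition.

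First I would verify the preconditions on $D$ and $\phi$. The set $D = \overline{B}_K(p)$ is convex and has diameter at most $2K$, so $b := 2K$ is a valid diameter bound (this is where the factor $24K = 12 \cdot 2K$ appearing in the conclusion comes from). The hypothesis of \ref{quant-min-gen} that, for every $k\in\N$, there exists $x\in D$ with $\|T_ix-x\|<\frac{1}{k+1}$ for all $i<l$ is satisfied trivially by $x=p$, since $p\in F\cap D$ and $T_ip=p$ for each $i<l$.

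Next I would record the bounds on $\phi$, which are already computed inside the proof of Proposition \ref{quant-min-P}: for any $x\in D$ we have $\|x\|\le \|x-p\|+\|p\|\le K+b_p = \kone$, hence $-b_U\kone \le \phi(x) \le \lceil \tfrac{1}{2}b_A\kone^2\rceil + b_U\kone$. Thus one may take $m_\phi = b_U\kone$ and $M_\phi = \lceil \tfrac{1}{2}b_A\kone^2\rceil + b_U\kone$, so that $m_\phi + M_\phi = \lceil \tfrac{1}{2}b_A\kone^2\rceil + 2b_U\kone$ and the parameter $r = (m_\phi+M_\phi)(k+1)$ furnished by \ref{quant-min-gamma-gen} matches the $r$ in the statement.

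Finally, I would check that substituting $b = 2K$ in the conclusion of \ref{quant-min-gamma-gen} turns the bound $12b((\check{f}+1)^{(r)}(0)+1)^2$ into $24K((\check{f}+1)^{(r)}(0)+1)^2$, and that the auxiliary $\check{f}(m) = \max\{\tilde{f}(12b(m+1)^2),\,12b(m+1)^2\}$ becomes $\max\{\tilde{f}(24K(m+1)^2),\,24K(m+1)^2\}$, both matching the stated proposition. There is no genuine obstacle here: the argument is purely a bookkeeping specialization of an already-proved general principle, so no new analytic content is needed, only a clean substitution of parameters.
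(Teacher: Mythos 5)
Your proposal is correct and follows the paper's own proof exactly: both apply Proposition \ref{quant-min-gamma-gen} with $\phi(x)=\frac{1}{2}\langle Ax,x\rangle-\langle x,u\rangle$, $m_\phi=b_U\kone$, $M_\phi=\left\lceil\frac{1}{2}b_A\kone^2\right\rceil+b_U\kone$ (as computed in Proposition \ref{quant-min-P}) and $b=2K$, so that $12b=24K$ and $r=(m_\phi+M_\phi)(k+1)$ match the stated constants. Nothing further is needed.
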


\begin{proof}
We apply Proposition \ref{quant-min-gamma-gen} with $\phi(x)=\frac{1}{2}\langle Ax,x\rangle-\langle x,u\rangle$, $M_\phi=\left\lceil\frac{1}{2}b_A\kone^2\right\rceil +b_U\kone, m_\phi=b_U\kone$  (see proof of Corollary \ref{quant-min-P}) and $b=2K$.
\end{proof}

The following result is similar to \cite[Lemma 2.7.]{Koh11}, but constitutes one of the places where the difference between Xu's result and Bauschke's theorem is illustrated - the role of the linear operator $A$.

\begin{prop}\label{propkey}
Let $x,y\in H$ and $b\in\N^*$ such that $\|x-y\| \le b$. Then, for any $k\in\N$:
\begin{multline}
\frac{1}{2}\langle Ax,x\rangle-\langle x,u\rangle \le \frac{1}{2}\langle Aw_{\lambda_k}(x,y),w_{\lambda_k}(x,y)\rangle-\langle w_{\lambda_k}(x,y),u\rangle + \frac{1}{3b_A b^2(k+1)^2} \\\to \langle u-Ax,y-x \rangle < \frac{1}{k+1}
\end{multline}
where $\lambda_k:=\frac{1}{2b_Ab^2(k+1)}$.

In particular, we get:
\begin{multline}
\forall \lambda \in[0,1] \left(\frac{1}{2}\langle Ax,x\rangle-\langle x,u\rangle \right.\\
\left.\le \frac{1}{2}\langle Aw_\lambda(x,y),w_\lambda(x,y)\rangle-\langle w_\lambda(x,y),u\rangle + \frac{1}{3b_A b^2(k+1)^2}\right) \\\to \langle u-Ax,y-x \rangle < \frac{1}{k+1}
\end{multline}
\end{prop}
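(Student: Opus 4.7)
The plan is to specialize to $\lambda = \lambda_k$ (noting $\lambda_k \in (0, 1/2]$, since $b_A, b, k+1 \ge 1$) and expand both sides of the hypothesis using $w_{\lambda_k}(x,y) = x + \lambda_k(y-x)$ together with the self-adjointness of $A$. A direct bilinear computation gives the identity
\begin{equation*}
\tfrac{1}{2}\langle A w_{\lambda_k}, w_{\lambda_k}\rangle - \langle w_{\lambda_k}, u\rangle - \left(\tfrac{1}{2}\langle Ax, x\rangle - \langle x, u\rangle\right) = -\lambda_k \langle u - Ax, y - x\rangle + \tfrac{\lambda_k^2}{2}\langle A(y-x), y-x\rangle,
\end{equation*}
where the linear cross-term is collected using $\langle A(y-x),x\rangle = \langle y-x, Ax\rangle$.

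Substituting this identity into the hypothesis and isolating the inner product of interest gives
\begin{equation*}
\lambda_k \langle u - Ax, y - x\rangle \le \tfrac{\lambda_k^2}{2}\langle A(y-x), y-x\rangle + \tfrac{1}{3 b_A b^2 (k+1)^2}.
\end{equation*}
Dividing through by $\lambda_k > 0$ and using the bound $\langle A(y-x), y-x\rangle \le \|A\|\,\|y-x\|^2 \le b_A b^2$ together with the explicit value $\lambda_k = \frac{1}{2 b_A b^2 (k+1)}$ yields
\begin{equation*}
\langle u - Ax, y - x\rangle \le \tfrac{\lambda_k}{2}\cdot b_A b^2 + \tfrac{1}{3 b_A b^2 (k+1)^2\, \lambda_k} = \tfrac{1}{4(k+1)} + \tfrac{2}{3(k+1)} = \tfrac{11}{12(k+1)} < \tfrac{1}{k+1},
\end{equation*}
which is the first assertion. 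For the second, it suffices to note that the universal hypothesis over $\lambda\in[0,1]$ specializes to the instance $\lambda=\lambda_k$, so the first part applies.

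There is no real obstacle here; the content of the proof is the choice of $\lambda_k$. It has to be small enough that the quadratic remainder $\frac{\lambda_k}{2}\langle A(y-x),y-x\rangle$ (after dividing by $\lambda_k$) is of order $1/(k+1)$, yet large enough that the error term $\frac{1}{3 b_A b^2(k+1)^2\lambda_k}$ is also of order $1/(k+1)$; balancing the two forces $\lambda_k$ to scale like $1/(b_A b^2 (k+1))$, and the specific constant $\frac{1}{2 b_A b^2 (k+1)}$ is what makes the final sum strictly below $\frac{1}{k+1}$.
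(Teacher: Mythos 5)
Your proof is correct and follows essentially the same route as the paper: expand the quadratic functional at $w_{\lambda_k}(x,y)$ using self-adjointness of $A$, divide by $\lambda_k$, bound the quadratic remainder by $b_Ab^2$ via Cauchy--Schwarz, and exploit the specific choice $\lambda_k=\frac{1}{2b_Ab^2(k+1)}$ to obtain $\frac{1}{4(k+1)}+\frac{2}{3(k+1)}<\frac{1}{k+1}$. Your packaging of the expansion as a single identity is a tidy presentation of the same computation, and your remark that $\lambda_k\in[0,1]$ justifies the specialization in the second claim.
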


\begin{proof}
First we observe that, since $A$ is linear:
\begin{eqnarray*}
&\frac{1}{2}\langle Aw_{\lambda_k}(x,y),w_{\lambda_k}(x,y)\rangle-\langle w_{\lambda_k}(x,y),u\rangle\\
=&\frac{1}{2}\langle A(x+\lambda_k(y-x)),x+\lambda_k(y-x)\rangle-\langle x+\lambda_k(y-x),u\rangle\\
=&\frac{1}{2}(\langle Ax,x\rangle + \lambda_k\langle Ax,y-x\rangle +\lambda_k\langle A(y-x),x+\lambda_k(y-x)\rangle)-\langle x,u\rangle-\lambda_k\langle y-x,u\rangle
\end{eqnarray*}

We now assume that
\[\frac{1}{2}\langle Aw_{\lambda_k}(x,y),w_{\lambda_k}(x,y)\rangle-\langle w_{\lambda_k}(x,y),u\rangle -\left(\frac{1}{2}\langle Ax,x\rangle-\langle x,u\rangle\right)\ge - \frac{1}{3b_A b^2(k+1)^2}\]
that is,
\begin{multline*}
\frac{\lambda_k}{2}\langle Ax,y-x\rangle+\frac{\lambda_k}{2}\langle Ay-Ax, x+\lambda_k(y-x)\rangle-\lambda_k\langle y-x,u\rangle \ge -\frac{1}{3b_A b^2(k+1)^2}
\end{multline*}
thus we get
\begin{multline*}
\frac{1}{2}\langle Ax,y-x\rangle+\frac{1}{2}\langle Ay, x\rangle-\frac{1}{2}\langle Ax, x\rangle+\frac{1}{2}\langle Ay-Ax,\lambda_k(y-x)\rangle-\langle y-x,u\rangle \\\ge -\frac{1}{3b_A b^2(k+1)^2\lambda_k}
\end{multline*}

We now note that $\frac{1}{3b_A b^2(k+1)^2\lambda_k}=\frac{2}{3(k+1)}$. We then recall that $\langle Ay,x\rangle =\langle y,Ax\rangle$ to deduce:

\begin{equation*}
\langle Ax,y-x\rangle+\frac{\lambda_k}{2}\langle A(y-x),y-x\rangle-\langle y-x,u\rangle \ge -\frac{2}{3(k+1)}
\end{equation*}
that is,
\begin{equation*}
\langle Ax-u,y-x\rangle+\frac{\lambda_k}{2}\langle A(y-x),y-x\rangle \ge -\frac{2}{3(k+1)}
\end{equation*}
then, by Cauchy-Schwarz:
\begin{equation*}
\langle Ax-u,y-x\rangle+\frac{\lambda_k}{2}b_Ab^2 \ge -\frac{2}{3(k+1)}
\end{equation*}

Now, noting that $\frac{\lambda_k}{2}b_Ab^2=\frac{1}{4(k+1)}$:
\begin{equation*}
\langle Ax-u,y-x\rangle \ge -\frac{2}{3(k+1)}-\frac{1}{4(k+1)} > -\frac{1}{k+1}
\end{equation*}
\end{proof}

We thus deduce the quantitative version of \eqref{VIP}.

\begin{prop}\label{quant-lem2.3}
Take $D=\overline{B}_K(p)$.

For any $k\in \N$ and $f:\N \to\N$ there is $N\in\N$ such that $N \le 24K((\check{f}+1)^{(R)}(0)+1)^2$ with $R:=12b_A K^2\left(\left\lceil\frac{1}{2}b_A\kone^2\right\rceil +2b_U\kone\right)(k+1)^2$ and $\check{f}$ as in Proposition \ref{quant-min-gamma-P} and there is $x\in D$ for which the following two properties hold:
\[\forall i<l \left(\|T_ix-x\| < \frac{1}{\tilde{f}(N)+1}\right)\]
\begin{equation*}
\forall y\in D \left(\forall i<l \left(\|T_iy-y\|\le\frac{1}{N+1}\right) \to \langle u-Ax,y-x \rangle < \frac{1}{k+1}\right)
\end{equation*}
\end{prop}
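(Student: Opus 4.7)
The plan is to combine Proposition \ref{quant-min-gamma-P} with Proposition \ref{propkey}, calibrating the precision parameter of the former so that Proposition \ref{propkey} can be applied to transfer approximate minimality of $\phi(z) := \frac{1}{2}\langle Az,z\rangle - \langle z,u\rangle$ into the variational-inequality-style conclusion $\langle u - Ax, y-x\rangle < \frac{1}{k+1}$.

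First I would observe that for any $x,y \in D = \overline{B}_K(p)$ we have $\|x-y\| \le 2K$, so in Proposition \ref{propkey} we can take the bound $b = 2K$. That proposition then says: whenever
\[
\forall \lambda \in [0,1] \left( \phi(x) \le \phi(w_\lambda(x,y)) + \frac{1}{3 b_A (2K)^2 (k+1)^2} \right),
\]
one has $\langle u - Ax, y-x\rangle < \frac{1}{k+1}$. Note that $3 b_A (2K)^2 (k+1)^2 = 12 b_A K^2 (k+1)^2$.

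Next I would apply Proposition \ref{quant-min-gamma-P} at precision level $k'$ chosen so that $k'+1 = 12 b_A K^2 (k+1)^2$, i.e., $k' = 12 b_A K^2 (k+1)^2 - 1$. With this choice, the quantity $r$ appearing in Proposition \ref{quant-min-gamma-P}, namely $r = \left(\left\lceil \tfrac{1}{2}b_A \tilde{K}^2 \right\rceil + 2 b_U \tilde{K}\right)(k'+1)$, becomes exactly
\[
R = 12 b_A K^2 \left(\left\lceil \tfrac{1}{2}b_A \tilde{K}^2 \right\rceil + 2 b_U \tilde{K}\right)(k+1)^2,
\]
as claimed in the statement. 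The function $\check{f}$ is the same one used in Proposition \ref{quant-min-gamma-P} (it depends only on $\tilde{f}$ and $K$, not on the chosen precision). Consequently Proposition \ref{quant-min-gamma-P} delivers some $N \le 24K((\check{f}+1)^{(R)}(0)+1)^2$ and some $x \in D$ such that $\|T_i x - x\| < \frac{1}{\tilde{f}(N)+1}$ for all $i<l$, together with the approximate-minimality condition at precision $\frac{1}{k'+1} = \frac{1}{12 b_A K^2 (k+1)^2}$.

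Finally, for any $y \in D$ with $\|T_i y - y\| \le \frac{1}{N+1}$ for all $i<l$, the approximate-minimality clause provided by Proposition \ref{quant-min-gamma-P} says precisely that for all $\lambda \in [0,1]$,
\[
\phi(x) < \phi(w_\lambda(x,y)) + \frac{1}{12 b_A K^2 (k+1)^2},
\]
so applying Proposition \ref{propkey} with $b = 2K$ yields the desired estimate $\langle u - Ax, y - x\rangle < \frac{1}{k+1}$. The argument is essentially a parameter-tracking exercise: the main point to verify carefully is the factor $b^2 = 4K^2$ coming from the diameter bound on $D$, which is what turns the $r$ of Proposition \ref{quant-min-gamma-P} into the $R$ of the statement; the rest is direct substitution into previously established quantitative results.
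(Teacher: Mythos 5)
Your proposal is correct and follows exactly the paper's own argument: apply Proposition \ref{quant-min-gamma-P} with $12b_AK^2(k+1)^2-1$ in place of $k$ (so that its $r$ becomes the stated $R$, while $\check{f}$ is unaffected), and then feed the resulting approximate-minimality clause into Proposition \ref{propkey} with $b=2K$. The parameter bookkeeping, including the factor $3b_Ab^2(k+1)^2 = 12b_AK^2(k+1)^2$, matches the paper's proof.
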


\begin{proof}
We apply Proposition \ref{quant-min-gamma-P} with $12b_A K^2(k+1)^2-1$ in place of $k$ and then Proposition \ref{propkey} (with $b:=2K$).
\end{proof}

\subsection{The compactness argument adapted to Xu's iteration}\label{XuProof}

Now we expose, in accordance with the paper \cite{FerLeuPin} by Ferreira, Leu\c{s}tean and Pinto, a modification of Step 3 of Xu's proof, where the argument involving weak sequential compactness is replaced by a Heine-Borel compactness principle. This modified proof illustrates how we may use the method from the aforementioned paper in order to compute a metastability rate.

\vspace{10pt}

Let $x^*$ be the unique solution of Problem (P). Take $D=\overline{B}_K(p)$.

By \eqref{VIP} we thus get, for all $k\in\N$:
\begin{equation}
\forall y \in D \left(\forall i<l (T_iy=y) \to \langle u-Ax^*, y-x^*\rangle < \frac{1}{k+1}\right)
\end{equation}
Define, for $m\in\N$:
\begin{equation}
\Omega_m:=\bigcup_{i<l}\left\{y\in H: \|T_iy-y\| > \frac{1}{m+1}\right\}\cup\left\{y\in H: \langle u-Ax^*, y-x^*\rangle < \frac{1}{k+1}\right\}
\end{equation}
Then $D \subseteq \bigcup_{m\in\N} \Omega_m$. Then by the principle of Heine-Borel compactness, we have an $N\in\N$ such that $D \subseteq \Omega_N$ (note how the sequence $(\Omega_m)$ is increasing). Thus:
\begin{equation} \label{a}
\forall y \in D \left(\forall i<l\left( \|T_iy-y\| \le \frac{1}{N+1}\right) \to \langle u-Ax^*, y-x^*\rangle < \frac{1}{k+1}\right)
\end{equation}

The above reasoning is made quantitative by Proposition \ref{quant-lem2.3}.

As $(x_n)$ is $T_i$-asymptotically regular for $0 \le i \le l-1$, we apply \eqref{a}, deducing that $\limsup_n \langle u-Ax^*, x_n-x^*\rangle \le 0$.

The quantitative result that expresses this modification is given in the sequel.

\begin{prop}\label{cond-innerprod-xn}
Assume (C1q) holds. Take $D=\overline{B}_K(p)$. Let $\tilde{\phi}$ be a monotone simultaneous rate of $T_i$-asymptotic regularity of $(x_n)$ for all $0\le i \le l-1$ such that $\tilde{\phi}(k) \ge \nalpha$ for all $k\in\N$.

For any $k\in \N$ and $f:\N \to\N$ there is $N'\in\N$ and $x\in D$ such that $N' \le \tilde{\phi}(24K((\mathring{f}+1)^{(R)}(0)+1)^2)$ with $\mathring{f}(m):=\max\{\tilde{f}(\tilde{\phi}(24K(m+1)^2)),24K(m+1)^2\}$ and $R$ as before, for which the following two properties hold:
\[\forall i<l \left(\|T_ix-x\| < \frac{1}{\tilde{f}(N')+1}\right)\]
\[\forall n\ge N' \left(\langle u-Ax,x_n-x \rangle < \frac{1}{k+1}\right)\]
In particular:
\[\forall n\in[N',f(N')] \left(\langle u-Ax,x_n-x \rangle < \frac{1}{k+1}\right)\]
\end{prop}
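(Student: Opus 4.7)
The plan is to apply Proposition \ref{quant-lem2.3} to an auxiliary counting function that encodes a precomposition with $\tilde{\phi}$, and then transfer the variational information from the witness point $x$ to the iterates $x_n$ via the simultaneous rate of $T_i$-asymptotic regularity.

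Concretely, I set $g(m) := \tilde{f}(\tilde{\phi}(m))$. Since both $\tilde{f}$ and $\tilde{\phi}$ are monotone, $g$ is monotone and hence $\tilde{g} = g$. A direct computation then shows
\[
\check{g}(m) = \max\{\tilde{g}(24K(m+1)^2),\, 24K(m+1)^2\} = \mathring{f}(m),
\]
so applying Proposition \ref{quant-lem2.3} to the pair $(k,g)$ yields some $N \le 24K((\mathring{f}+1)^{(R)}(0)+1)^2$ and a point $x \in D$ for which
\[
\forall i<l\left(\|T_ix - x\| < \frac{1}{\tilde{g}(N)+1} = \frac{1}{\tilde{f}(\tilde{\phi}(N))+1}\right)
\]
together with the variational consequence for all $y\in D$ satisfying $\|T_iy - y\| \le \tfrac{1}{N+1}$ for all $i<l$.

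Next, I define $N' := \tilde{\phi}(N)$. Monotonicity of $\tilde{\phi}$ immediately gives the required bound $N' \le \tilde{\phi}(24K((\mathring{f}+1)^{(R)}(0)+1)^2)$, and the first claimed property $\|T_ix - x\| < \tfrac{1}{\tilde{f}(N')+1}$ is just the rewriting above. For the second property, fix any $n \ge N'$. Because $N' = \tilde{\phi}(N) \ge \nalpha$ by hypothesis, Lemma \ref{bound-lem} places $x_n$ inside $D = \overline{B}_K(p)$; and because $\tilde{\phi}$ is a simultaneous rate of $T_i$-asymptotic regularity, we have $\|T_ix_n - x_n\| \le \tfrac{1}{N+1}$ for every $i<l$. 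Instantiating the variational clause of Proposition \ref{quant-lem2.3} at $y := x_n$ yields $\langle u - Ax, x_n - x \rangle < \tfrac{1}{k+1}$, as required. The ``in particular'' statement is an immediate specialization to $n \in [N', f(N')]$.

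The only real subtlety is the bookkeeping at the outset: the auxiliary function $g$ must be chosen so that, after its monotonization $\tilde{g}$ is composed with the internal $m \mapsto 24K(m+1)^2$, one recovers exactly $\mathring{f}$. This is what makes the bound extracted from Proposition \ref{quant-lem2.3} line up, after a final application of $\tilde{\phi}$, with the bound claimed in the statement. Everything else is a routine application of monotonicity and Lemma \ref{bound-lem}.
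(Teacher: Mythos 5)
Your proposal is correct and follows essentially the same route as the paper: apply Proposition \ref{quant-lem2.3} with $g:=\tilde{f}\circ\tilde{\phi}$ in place of $f$ (noting $\check{g}=\mathring{f}$), set $N':=\tilde{\phi}(N)$, and transfer the variational clause to the iterates via the asymptotic-regularity rate, boundedness from Lemma \ref{bound-lem}, and monotonicity of $\tilde{\phi}$. The bookkeeping you highlight ($\tilde{g}=g$ by monotonicity, so the extracted bound composes with $\tilde{\phi}$ as claimed) matches the paper's computation exactly.
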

\begin{proof}
We apply Proposition \ref{quant-lem2.3} with $g(n):=\tilde{f}(\tilde{\phi}(n))$ in place of $f$. We get $N\in\N$ such that $N \le 24K((\mathring{f}+1)^{(R)}(0)+1)^2$ with $R:=12b_A K^2\left(\left\lceil\frac{1}{2}b_A\kone^2\right\rceil +2b_U\kone\right)\\(k+1)^2$ and $\mathring{f}(m):=\check{g}(m)=\max\{\tilde{f}(\tilde{\phi}(24K(m+1)^2)),24K(m+1)^2\}$ and we get $x\in D$ for which the following two properties hold:
\[\forall i<l \left(\|T_ix-x\| < \frac{1}{\tilde{f}(\tilde{\phi}(N))+1}\right)\]
\begin{equation*}
\forall y\in D \left(\forall i<l \left(\|T_iy-y\|\le\frac{1}{N+1}\right) \to \langle u-Ax,y-x \rangle < \frac{1}{k+1}\right)
\end{equation*}

We have that
\[\forall n\ge \tilde{\phi}(N)\forall i<l \left(\|T_ix_n-x_n\|\le\frac{1}{N+1}\right)\]
Since $\tilde{\phi}(N) \ge \nalpha$, by Lemma \ref{bound-lem} we get that $x_n\in D$ for $n\ge \tilde{\phi}(N)$, thus
\[\forall n\ge \tilde{\phi}(N) \left(\langle u-Ax,x_n-x \rangle < \frac{1}{k+1}\right)\]
Take $N':=\tilde{\phi}(N)$. Since $\tilde{\phi}$ is monotone, we have $N' \le \tilde{\phi}(24K((\mathring{f}+1)^{(R)}(0)+1)^2)$. Furthermore,
\[\forall i<l \left(\|T_ix-x\| < \frac{1}{\tilde{f}(N')+1}\right)\]
\end{proof}

\subsection{Rates of metastability}

The following two results correspond to the last step of Xu's proof.

\begin{prop} \label{step7}
Assume (C1q) and (C2q) hold. Let $k,n\in\N,x\in \overline{B}_K(p)$ and let $f:\N \to \N$ be a monotone function. If
\[\forall i<l \left(\|T_ix-x\| \le \frac{1}{\eta(k,n,f)+1}\right)\]
and
\[\forall j \in [n+\nalpha+1,f(\sigma(k,n))+\nalpha+1] \left(\langle u-Ax,x_j-x\rangle \le \frac{\gamma}{6(k+1)}\right)\]
with $\sigma(k,n)= \sigma_2 \left (\left \lceil \frac{n+\lceil \ln(12K^2(k+1))\rceil}{\gamma} \right\rceil +\nalpha+1\right)-\nalpha$, then
\[\forall j\in [\sigma(k,n)+\nalpha,f(\sigma(k,n)+\nalpha)]\left(\|x_j-x\|^2 \le \frac{1}{k+1}\right)\]
\end{prop}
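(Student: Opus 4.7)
The plan is to cast the squared-distance recurrence \eqref{xn1-x-sq-ineq} into the shape required by Proposition \ref{XuMeta}. Working shifted by $\nalpha$ (so that $\alpha_{i+\nalpha+1}\le\|A\|^{-1}$ via Lemma \ref{lem-nalpha} and the inequality applies), I would set
\begin{equation*}
s_i := \|x_{i+\nalpha}-x\|^2,\quad a_i := \gamma\alpha_{i+\nalpha+1},\quad b_i := \frac{2}{\gamma}\langle u-Ax,\, x_{i+\nalpha+1}-x\rangle,
\end{equation*}
\begin{equation*}
c_i := 2\|x_{i+\nalpha}-x\|\,\|T_{i+\nalpha+1}x-x\| + \|T_{i+\nalpha+1}x-x\|^2,
\end{equation*}
so that $s_{i+1}\le (1-a_i)s_i + a_i b_i + c_i$. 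Lemma \ref{bound-lem} gives $\|x_{i+\nalpha}-x\|\le 2K$ and hence $L:=4K^2$ as an upper bound for $(s_i)$.

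Next I would read off a rate of divergence for $\sum a_i$ from $\sigma_2$, following the same bookkeeping as in Claim~2 of Proposition \ref{tildeTn-as-reg}: discarding the at most $\nalpha+1$ initial terms (each bounded by $1$) yields $\theta(k):=\sigma_2(\lceil k/\gamma\rceil+\nalpha+1)-\nalpha-1$. A direct calculation then shows that the rate $\theta(n+\lceil\ln(3L(k+1))\rceil)+1$ produced by Proposition \ref{XuMeta} coincides exactly with the $\sigma(k,n)$ appearing in the statement, so no extra massaging of indices is required at this point.

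The substantive verification is the pair of bounds $b_i\le\frac{1}{3(k+1)}$ and $c_i\le\frac{1}{3(k+1)(q+1)}$ for $q:=f(\sigma(k,n))$. The $b_i$ bound is immediate from the hypothesis on the inner product, since $i\in[n,q]$ precisely means $i+\nalpha+1\in[n+\nalpha+1,\,f(\sigma(k,n))+\nalpha+1]$. For $c_i$ the three-fold maximum in $\eta(k,n,f)$ is engineered exactly to split the two summands: $\eta+1\ge 24K(k+1)(f(\sigma(k,n))+1)$ handles the linear term, giving $4K\|T_{i+\nalpha+1}x-x\|\le\frac{1}{6(k+1)(q+1)}$; and $\eta+1\ge 6(k+1)$ combined with $\eta\ge f(\sigma(k,n))$ handles the quadratic term, giving $\|T_{i+\nalpha+1}x-x\|^2\le\frac{1}{(\eta+1)^2}\le\frac{1}{6(k+1)(q+1)}$. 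Adding the two yields the required $c_i$ bound.

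Applying Proposition \ref{XuMeta} with $q=f(\sigma(k,n))$ then gives $s_i\le\frac{1}{k+1}$ for $i\in[\sigma(k,n),q]$, which unshifts to the desired bound on $\|x_j-x\|^2$ for $j$ in the claimed range. The main obstacle is really just the careful tracking of the $\nalpha$-shift across the four bookkeeping tasks (divergence rate, hypothesis intervals, matching of $\sigma$, and final unshifting of the conclusion); beyond that, the proof is a direct instantiation of Proposition \ref{XuMeta} with the bounds chosen so that $\eta$ absorbs both the linear and quadratic dependencies on $\|T_{i+\nalpha+1}x-x\|$ simultaneously.
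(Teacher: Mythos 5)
Your write-up reproduces the paper's argument essentially verbatim up to the last line: the same instantiation of Proposition \ref{XuMeta} with $s_i=\|x_{i+\nalpha}-x\|^2$, $a_i=\gamma\alpha_{i+\nalpha+1}$, $b_i$, $c_i$, the bound $L=4K^2$, $q=f(\sigma(k,n))$, the divergence-rate bookkeeping copied from Claim~2 of Proposition \ref{tildeTn-as-reg}, and the splitting of $c_i$ against the three terms in $\eta(k,n,f)$ are all exactly the paper's choices, and those verifications are correct. The gap is in the final ``unshifting''. What Proposition \ref{XuMeta} gives you, after shifting back by $\nalpha$, is $\|x_j-x\|^2\le\frac{1}{k+1}$ for $j\in[\sigma(k,n)+\nalpha,\ f(\sigma(k,n))+\nalpha]$, whereas the proposition asserts the interval $[\sigma(k,n)+\nalpha,\ f(\sigma(k,n)+\nalpha)]$. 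For a monotone $f$ these need not coincide: with $f(m)=2m$ one has $f(\sigma+\nalpha)=f(\sigma)+2\nalpha>f(\sigma)+\nalpha$ whenever $\nalpha\ge 1$. The larger endpoint $f(\sigma(k,n)+\nalpha)$ is precisely what is needed downstream, when this proposition is combined with Proposition \ref{FLPres} (whose condition (ii) demands the conclusion on $[M,\tilde f(M)]$ with $M=\sigma(\cdot)+\nalpha$), so the discrepancy cannot be waved away; your claim that the unshifted interval ``is the claimed range'' is false in general, and as written you have proved a weaker statement.

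The paper closes this by one further re-parametrization at the very end of its proof: given the monotone function $g$ of the statement, it runs the whole argument with $f(j):=g(j+\nalpha)$ (still monotone), so that the endpoint produced, $f(\sigma(k,n))+\nalpha$, dominates $g(\sigma(k,n)+\nalpha)$, which yields the interval as stated (with the hypotheses read for this shifted function). You need to add that step, or an equivalent observation, to bridge from $f(\sigma(k,n))+\nalpha$ to $f(\sigma(k,n)+\nalpha)$. A much smaller point: your $\theta(k)=\sigma_2(\lceil k/\gamma\rceil+\nalpha+1)-\nalpha-1$ should be truncated at $0$ (as the paper's $\tilde\Theta$ is) so that it is a function into $\N$; Lemma \ref{div-rate-k-1} then justifies dropping the truncation when you simplify $\sigma(k,n)$ itself, which is how the paper arrives at the stated formula.
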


\begin{proof}
For $n\ge \nalpha$, by \eqref{xn1-x-sq-ineq}:
\begin{eqnarray*}
\|x_{n+1}-x\|^2 &\le& (1-\alpha_{n+1}\gamma)\|x_n-x\|^2+2\|x_n-x\|\|T_{n+1}x-x\|\\
&&+\|T_{n+1}x-x\|^2+\alpha_{n+1}\gamma\frac{2}{\gamma}\langle u-Ax,x_{n+1}-x\rangle
\end{eqnarray*}
We show that we can apply Proposition \ref{XuMeta} with
$s_n:=\|x_{n+\nalpha}-x\|^2,\quad a_n:=\alpha_{n+\nalpha+1}\gamma,\quad b_n:=\frac{2}{\gamma}\langle u-Ax,x_{n+\nalpha+1}-x\rangle, \quad c_n:=2\|x_{n+\nalpha}-x\|\|T_{n+\nalpha+1}x-x\|+\|T_{n+\nalpha+1}x-x\|^2$ and $q:=f(\sigma(k,n))$ for arbitrary $k,n\in\N$.

Analogously to what was done in Claim 2 of Proposition \ref{tildeTn-as-reg}, we have that $\tilde{\Theta}(k) = \max\left\{\sigma_2 \left (\left \lceil \frac{k}{\gamma} \right\rceil +\nalpha+1\right)-\nalpha-1,0\right\}$ is a rate of divergence for $\sum\limits_{n=0}^{+\infty} a_n$.

Given $p\in F$, we have that $\|x_{n+\nalpha}-x\|^2 \le (\|x_{n+\nalpha}-p\|+\|p-x\|)^2 \le 4K^2$.

We have, for $j\in\N$: 
\begin{eqnarray*}
c_j &=&2\|x_{j+\nalpha}-x\|\|T_{j+\nalpha+1}x-x\|+\|T_{j+\nalpha+1}x-x\|^2\\
&\le& 4K\frac{1}{\eta(k,n,f)+1}+\frac{1}{(\eta(k,n,f)+1)^2}\\
&\le& 4K\frac{1}{24K(k+1)(f(\sigma(k,n))+1)+1}+\frac{1}{(\max\{f(\sigma(k,n)),6k+5\}+1)^2}\\
&\le& \frac{1}{6(k+1)(f(\sigma(k,n))+1)} +\frac{1}{6(k+1)(f(\sigma(k,n))+1)}\\
&=&\frac{1}{3(k+1)(f(\sigma(k,n))+1)}=\frac{1}{3(k+1)(q+1)}
\end{eqnarray*}

By hypothesis we get
\[\forall j \in [n,q] \left(b_j \le \frac{1}{3(k+1)}\right)\]
Thus, Proposition \ref{XuMeta} gives us
\[\forall j\in [\sigma(k,n),f(\sigma(k,n))]\left(\|x_{j+\nalpha}-x\|^2 \le \frac{1}{k+1}\right)\]
that is,
\[\forall j\in [\sigma(k,n)+\nalpha,f(\sigma(k,n))+\nalpha]\left(\|x_j-x\|^2 \le \frac{1}{k+1}\right)\]

with
\begin{align*}
\sigma(k,n)&=\theta(n+\lceil \ln(3L(k+1))\rceil)+1\\
&=\tilde{\Theta}(n+\lceil \ln(12K^2(k+1))\rceil)+1\\
&=\max\left\{\sigma_2 \left (\left \lceil \frac{n+\lceil \ln(12K^2(k+1))\rceil}{\gamma} \right\rceil +\nalpha+1\right)-\nalpha-1,0\right\}+1\\
&=\max\left\{\sigma_2 \left (\left \lceil \frac{n+\lceil \ln(12K^2(k+1))\rceil}{\gamma} \right\rceil +\nalpha+1\right)-\nalpha,1\right\}
\end{align*}
Note that, by Lemma \ref{div-rate-k-1}, we have that
\begin{equation*}
\sigma_2 \left (\left \lceil \frac{n+\lceil \ln(12K^2(k+1))\rceil}{\gamma} \right\rceil +\nalpha+1\right)
\ge\left \lceil \frac{n+\lceil \ln(12K^2(k+1))\rceil}{\gamma} \right\rceil +\nalpha
\ge\nalpha+1
\end{equation*}
thus
\[\sigma(k,n)=\sigma_2 \left (\left \lceil \frac{n+\lceil \ln(12K^2(k+1))\rceil}{\gamma} \right\rceil +\nalpha+1\right)-\nalpha\]

Now, given $g:\N\to\N$ monotone, take $f:\N\to\N$ with $f(j):=g(j+\nalpha)$ for all $j\in\N$. Then $f$ is monotone and $g(j+\nalpha) \le f(j)+\nalpha$ for all $j\in\N$. From the above we get
\[\forall j\in [\sigma(k,n)+\nalpha,g(\sigma(k,n)+\nalpha)]\left(\|x_j-x\|^2 \le \frac{1}{k+1}\right)\]
\end{proof}

\begin{prop} \label{step7-C2q**}
Assume (C1q) and (C2q*) hold. Let $k,n\in\N,x\in \overline{B}_K(p)$ and let $f:\N \to \N$ be a monotone function. If
\[\forall i<l \left(\|T_ix-x\| \le \frac{1}{\eta(k,n,f)+1}\right)\]
and
\[\forall j \in [n+\nalpha+1,f(\sigma^*(k,n))+\nalpha+1] \left(\langle u-Ax,x_j-x\rangle \le \frac{\gamma}{6(k+1)}\right)\]
with $\sigma^*(k,n)=\max\left\{\sigma_2^*\left(n, \frac{1}{12K^2(k+1)}\right)-\nalpha-l,1\right\}$, then
\[\forall j\in [\sigma^*(k,n)+\nalpha,f(\sigma^*(k,n)+\nalpha)]\left(\|x_j-x\|^2 \le \frac{1}{k+1}\right)\]
\end{prop}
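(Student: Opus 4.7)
The proof will closely parallel that of Proposition \ref{step7}: we use inequality \eqref{xn1-x-sq-ineq} to obtain the same recurrence
\[
\|x_{n+1}-x\|^2 \le (1-\alpha_{n+1}\gamma)\|x_n-x\|^2 + \alpha_{n+1}\gamma\cdot\tfrac{2}{\gamma}\langle u-Ax,x_{n+1}-x\rangle + c_n',
\]
valid for $n\ge\nalpha$, where $c_n'=2\|x_n-x\|\|T_{n+1}x-x\|+\|T_{n+1}x-x\|^2$. Setting $s_n := \|x_{n+\nalpha}-x\|^2$, $a_n := \gamma\alpha_{n+\nalpha+1}$, $b_n := \tfrac{2}{\gamma}\langle u-Ax,x_{n+\nalpha+1}-x\rangle$, and $c_n := 2\|x_{n+\nalpha}-x\|\|T_{n+\nalpha+1}x-x\|+\|T_{n+\nalpha+1}x-x\|^2$, and choosing $q := f(\sigma^*(k,n))$, we have the hypothesis (i) of Proposition \ref{XuMeta-C2q**}. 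Moreover, $s_n\le 4K^2$, so we can take $D:=4K^2$.

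The essential replacement is condition (i) for $\sum a_n$: instead of invoking (C2q) to build a rate of divergence $\tilde\Theta$, we use (C2q*) to build an $A'$ satisfying \eqref{q2'} for the sequence $(a_n)$. Since $a_n = \gamma\alpha_{n+\nalpha+1}$, we have $\prod_{i=m}^{N}(1-a_i) = \prod_{j=m+\nalpha+1}^{N+\nalpha+1}(1-\gamma\alpha_j)$, so taking $A'(m,k) := \sigma_2^*(m+\nalpha+1, k)-\nalpha-1$ (and using monotonicity of $\sigma_2^*$ to majorize $m+\nalpha+1$ by $m+\nalpha+l$ if needed to fit the stated form) yields a function satisfying \eqref{q2'}.

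The bounds $b_j\le \tfrac{1}{3(k+1)}$ and $c_j\le\tfrac{1}{3(k+1)(q+1)}$ are verified exactly as in the proof of Proposition \ref{step7}: the bound on $b_j$ follows directly from the hypothesis $\langle u-Ax,x_j-x\rangle\le\tfrac{\gamma}{6(k+1)}$ on the interval $[n+\nalpha+1,f(\sigma^*(k,n))+\nalpha+1]$, and the bound on $c_j$ follows from the estimate $\|T_ix-x\|\le\tfrac{1}{\eta(k,n,f)+1}$ together with $\|x_{j+\nalpha}-x\|\le 2K$ and the defining inequalities $\eta(k,n,f)\ge 24K(k+1)(f(\sigma^*(k,n))+1)$ and $\eta(k,n,f)\ge 6k+5$. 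Applying Proposition \ref{XuMeta-C2q**} then gives $\|x_{j+\nalpha}-x\|^2\le\tfrac{1}{k+1}$ for $j\in[A'(n,12K^2(k+1)-1)+1,\,f(\sigma^*(k,n))]$, which after re-indexing $j\mapsto j+\nalpha$ and applying the same monotone-$g$ trick used at the end of the proof of Proposition \ref{step7} yields the stated conclusion.

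The only genuinely delicate point is bookkeeping the shifts so that the resulting threshold matches the stated $\sigma^*(k,n)=\max\{\sigma_2^*(n,\tfrac{1}{12K^2(k+1)})-\nalpha-l,\,1\}$; everything else is a mechanical translation of the proof of Proposition \ref{step7} with Proposition \ref{XuMeta} swapped for Proposition \ref{XuMeta-C2q**}. The $\max\{\cdot,1\}$ guard plays the role that Lemma \ref{div-rate-k-1} played in the (C2q) case, ensuring positivity of the final expression.
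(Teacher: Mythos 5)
Your overall strategy is the same as the paper's: instantiate Proposition \ref{XuMeta-C2q**} with $s_n=\|x_{n+\nalpha}-x\|^2$, $a_n=\gamma\alpha_{n+\nalpha+1}$, $b_n=\frac{2}{\gamma}\langle u-Ax,x_{n+\nalpha+1}-x\rangle$, $c_n$ as stated, $D=4K^2$, $q=f(\sigma^*(k,n))$, reuse the $b_j$, $c_j$ estimates from Proposition \ref{step7}, and finish with the same re-indexing trick. All of that is fine. The genuine gap is precisely the point you dismiss as ``bookkeeping'': the choice of ${\rm A'}$ and the claim that the resulting threshold matches the stated $\sigma^*$. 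With your ${\rm A'}(m,k)=\sigma_2^*(m+\nalpha+1,k)-\nalpha-1$, Proposition \ref{XuMeta-C2q**} yields the conclusion only from the index ${\rm A'}(n,12K^2(k+1)-1)+1=\sigma_2^*(n+\nalpha+1,12K^2(k+1)-1)-\nalpha$ onward. Since $\sigma_2^*$ is increasing in its first argument, this quantity is in general \emph{larger} than the stated $\sigma^*(k,n)=\max\{\sigma_2^*(n,\cdot)-\nalpha-l,1\}$, so monotonicity runs in the wrong direction for your parenthetical ``majorize $m+\nalpha+1$ by $m+\nalpha+l$'' repair: you may enlarge a threshold you have established, but you cannot shrink it, and the indices $j$ with $\sigma^*(k,n)+\nalpha\le j<\sigma_2^*(n+\nalpha+1,\cdot)-\nalpha+\nalpha$ (as well as the evaluation point $f(\sigma^*(k,n)+\nalpha)$ in the conclusion) are simply not covered by what you proved. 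So as written you prove a variant of the proposition with a different, weaker rate, not the proposition as stated.

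The paper closes this step differently: it reuses the function $\Theta^*$ from \eqref{def-theta*} in Proposition \ref{C2q**-tildeTn-as-reg}, i.e.\ $\Theta^*(m,k)=\max\{\sigma_2^*(m,k)-\nalpha-l-1,0\}$, asserting it is a rate of convergence of $\prod_{n\ge m}(1-a_n)$ for the shifted sequence $a_n=\gamma\alpha_{n+\nalpha+1}$; then $\sigma^*(k,n)=\Theta^*\left(n,12K^2(k+1)-1\right)+1$ gives exactly the stated formula (note also that the second argument of ${\rm A'}$ should be $3D(k+1)-1=12K^2(k+1)-1$, as you correctly use). To prove the proposition as stated you must either invoke and justify \eqref{def-theta*} for this shift, or restate the result with the threshold your ${\rm A'}$ actually delivers; the ``$\max\{\cdot,1\}$ guard'' remark does not address this, since the discrepancy is in the first argument of $\sigma_2^*$ and in the additive shift, not in positivity.
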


\begin{proof}
We have \eqref{xn1-x-sq-ineq} for $n\ge \nalpha$. We show that we can apply Proposition \ref{XuMeta-C2q**} with
$s_n:=\|x_{n+\nalpha}-x\|^2,\quad a_n:=\alpha_{n+\nalpha+1}\gamma,\quad b_n:=\frac{2}{\gamma}\langle u-Ax,x_{n+\nalpha+1}-x\rangle, \quad c_n:=2\|x_{n+\nalpha}-x\|\|T_{n+\nalpha+1}x-x\|+\|T_{n+\nalpha+1}x-x\|^2$ and $q:=f(\sigma^*(k,n))$ for arbitrary $k,n\in\N$.

We know that, given $m\in\N$ and defining $\Theta^*$ as in \eqref{def-theta*}, $\Theta^*(m,\cdot)$ is a rate of convergence of $\prod\limits_{n=m}^\infty (1-a_n)$ to $0$.

We know from the proof of Proposition \ref{step7} that the conditions on $b_n$ and $c_n$ are satisfied.

Thus, Proposition \ref{XuMeta-C2q**} gives us
\[\forall j\in [\sigma^*(k,n),f(\sigma^*(k,n))]\left(\|x_{j+\nalpha}-x\|^2 \le \frac{1}{k+1}\right)\]
that is,
\[\forall j\in [\sigma^*(k,n)+\nalpha,f(\sigma^*(k,n))+\nalpha]\left(\|x_j-x\|^2 \le \frac{1}{k+1}\right)\]

with

\begin{align*}
\sigma^*(k,n)&={\rm A'}\left(n, \frac{1}{3D(k+1)}\right)+1\\
&=\Theta^*\left(n, \frac{1}{12K^2(k+1)}\right)+1\\
&=\max\left\{\sigma_2^*\left(n, \frac{1}{12K^2(k+1)}\right)-\nalpha-l-1,0\right\}+1\\
&=\max\left\{\sigma_2^*\left(n, \frac{1}{12K^2(k+1)}\right)-\nalpha-l,1\right\}.
\end{align*}

The proof concludes as in Proposition \ref{step7}.
\end{proof}


Below we show the conclusion of the proof for Theorems \ref{main} and \ref{main-C4q}:

\begin{proof}
We first apply Proposition \ref{step7} with $(k+1)^2-1$ in place of $k$ and $\tilde{f}$ in place of $f$. Thus, if

\[\forall i<l \left(\|T_ix-x\| < \frac{1}{\eta((k+1)^2-1,n,\tilde{f})+1}\right)\]
and
\[\forall j \in [n+\nalpha+1,\tilde{f}(\sigma((k+1)^2-1,n))+\nalpha+1] \left(\langle u-Ax,x_j-x\rangle \le \frac{\gamma}{6(k+1)^2}\right)\]

Then
\[\forall j\in [\sigma((k+1)^2-1,n)+\nalpha,\tilde{f}(\sigma((k+1)^2-1,n)+\nalpha)]\left(\|x_j-x\| \le \frac{1}{k+1}\right)\]

Thus, and also due to Proposition \ref{cond-innerprod-xn}, we may apply Proposition \ref{FLPres} with $X=H,D=\overline{B}_K(p),u_n=x_n,P=\nalpha+1,U_i=T_i,\phi_i(x,y)=\langle u-Ax, y\rangle$ and
\begin{eqnarray*}
\bar{\psi}(k,f)&=&\tilde{\phi}(24K((\mathring{f}+1)^{(R)}(0)+1)^2)\\
\bar{\delta}(k)&=&6\left\lceil\frac{1}{\gamma}\right\rceil(k+1)^2-1\\
\bar{\gamma}(k,n,f)&=&\eta((k+1)^2-1,n,\tilde{f})\\
\bar{\eta}(k,n,f)&=&\tilde{f}(\sigma((k+1)^2-1,n))\\
\bar{\sigma}(k,n,f)&=&\sigma((k+1)^2-1,n)+\nalpha
\end{eqnarray*}
then $\bar{\psi}(\bar{\delta}(2k+1),\bar{f})=\bar{\psi}(24\left\lceil\frac{1}{\gamma}\right\rceil(k+1)^2-1,\bar{f})=\tilde{\phi}(24K(h^{(\dd)}(0)+1)^2)$ where $\bar{f}(m) := \max\{\bar{\gamma}(2k+1,m,f),\bar{\eta}(2k+1,m,f)\}=\max\{\eta(4(k+1)^2-1,m,\tilde{f}),\\\tilde{f}(\sigma(4(k+1)^2-1,m))\}$.

\end{proof}

For Theorems \ref{main-C2q**} and \ref{main-C4q-C2q**}, we apply Proposition \ref{step7-C2q**} instead of Proposition \ref{step7} and replace $\sigma$ with $\sigma^*$.


\paragraph{Acknowledgements:} The author is grateful to his PhD advisors Fernando Ferreira and Lauren\c{t}iu Leu\c{s}tean for providing valuable comments and suggestions that improved this paper.

The author acknowledges the support of FCT -- Fundação para a Ciência e Tecnologia through a doctoral scholarship with reference number 2022.12585.BD as well as the support of the research center CEMS.UL under the FCT funding UIDB/04561/2025.


\begin{thebibliography}{99}
\bibitem{KaAk}
G.~P.~Akilov, L.~V.~Kantorovich.
\newblock{Functional Analysis}.
\newblock{Second Edition, Pergamon Press, 1982}

\bibitem{Bau}
H.~H.~Bauschke.
\newblock{The approximation of fixed points of compositions of nonexpansive mappings in {H}ilbert space}.
\newblock{\em Journal of Mathematical Analysis and Applications} 202 (1996) 150--159

\bibitem{BauCom}
H.~H.~Bauschke, P.~L.~Combettes.
\newblock{Convex Analysis and Monotone Operator Theory in Hilbert Spaces}.
\newblock{Second Edition, CMS Books in Mathematics, Springer, 2017}

\bibitem{DeuYam}
F.~Deutsch, I.~Yamada.
\newblock{Minimizing certain convex functions over the intersection of the fixed point sets of nonexpansive mappings}.
\newblock{\em Numerical Functional Analysis and Optimization}, 19 (1\&2), 33--56 (1998)
\bibitem{DinPin20} B.~Dinis, P.~Pinto.
\newblock{On the convergence of algorithms with Tikhonov regularization terms}.
\newblock{\em Optimization Letters} 15(4), 1263--1276 (2021)
\bibitem{DinPin23} B.~Dinis, P.~Pinto.
\newblock{Strong convergence for the alternating {H}alpern-{M}ann iteration in CAT(0) spaces}.
\newblock{\em SIAM Journal on Optimization}, 33(2), 785--815 (2023)
\bibitem{FerLeuPin}F.~Ferreira, L.~Leu\c{s}tean, P.~Pinto.
\newblock{On the removal of weak compactness arguments in proof mining}.
\newblock{\em Advances in Mathematics}, 354 (2019) 106728
\bibitem{FerOli05}F.~Ferreira, P.~Oliva.
\newblock{Bounded functional interpretation}.
\newblock{\em Annals of Pure and Applied Logic}, 135 (2005) 73--112
\bibitem{LeuPin21}
L.~Leu\c{s}tean and P.~Pinto.
\newblock Quantitative results on a {H}alpern-type proximal point algorithm.
\newblock {\em Computational Optimization and Applications}, 79:101--125, 2021.
\bibitem{Koh96}
U.~Kohlenbach.
\newblock{Analysing Proofs in Analysis}.
\newblock{in: W.~Hodges, M.~Hyland, C.~Steinhorn, J.~Truss
(Eds.), {\em Logic: from Foundations to Applications}. {\em European Logic Colloquium}, Keele, 1993, {\em Oxford
University Press}, 1996, pp. 225--260.}
\bibitem{Koh08}
U.~Kohlenbach.
\newblock{Applied Proof Mining: Proof Interpretations and their Use in Mathematics}.
\newblock{Springer,} 2008
\bibitem{Koh11}
U.~Kohlenbach.
\newblock{On quantitative versions of theorems due to F.~E.~Browder and R.~Wittmann}.
\newblock{\em Advances in Mathematics} 226 (2011) 2764--2795
\bibitem{Koh17}
U.~Kohlenbach.
\newblock{Recent Progress in Proof Mining in Nonlinear Analysis}.
\newblock{\em IFCoLog Journal of Logics and their Applications}, 10:3357--3406, 2017.
\bibitem{Koh18}
U.~Kohlenbach.
\newblock{Proof-theoretic methods in nonlinear analysis}.
\newblock{\em In B. Sirakov, P. Ney de Souza, and M. Viana, editors, Proceedings of ICM 2018}, Vol. 2, pages 61–82. World Scientific, 2019.
\bibitem{Kor}
D.~Körnlein.
\newblock{Quantitative Strong Convergence for the Hybrid Steepest Descent Method}.
\newblock{Technical Report}, arXiv:1610.00517 [math.LO], October 2016.
\bibitem{KorTh}
D.~Körnlein.
\newblock{Quantitative Analysis of Iterative Algorithms in Fixed Point Theory and Convex Optimization},
\newblock{http://tuprints.ulb.tu-darmstadt.de/5485/}, PhD thesis, Technische Universität Darmstadt, 2016.
\bibitem{OPX} J.~G.~O'Hara, P.~Pillay, H.-K.~Xu.
\newblock Iterative approaches to finding nearest common fixed points of nonexpansive mappings
in {H}ilbert spaces
\newblock {\em Nonlinear Analysis} 54 (2003) 1417--1426
\bibitem{Pin21} P.~Pinto.
\newblock A Rate of Metastability for the Halpern Type Proximal
Point Algorithm,
\newblock{\em Numerical Functional Analysis and Optimization}, 42:3, 320--343 (2021)
\bibitem{Pin23} P.~Pinto.
\newblock On quantitative versions of Xu's Lemma.
\newblock 2023
\bibitem{Xu03} H.-K.~Xu.
\newblock{An Iterative Approach to Quadratic Optimization}.
\newblock{\em Journal of Optimization Theory and Applications}, Vol. 116, No. 3, 659--678, 2003
\bibitem{Yam} I.~Yamada.
\newblock{The hybrid steepest descent method for the variational inequality problem over
the intersection of ﬁxed point sets of nonexpansive mappings}.
\newblock{In Y. C. Dan Butnariu and
S. Reich, editors, {\em Inherently Parallel Algorithms in Feasibility and Optimization and their
Applications}, volume 8 of Studies in Computational Mathematics}, pages 473--504. Elsevier,
2001
\end{thebibliography}
\end{document}